\newtheorem{theorem}{Theorem}[section]
\newtheorem{corollary}[theorem]{Corollary}
\newtheorem{proposition}[theorem]{Proposition}
\theoremstyle{definition}
\newtheorem*{definition*}{Definition}
\theoremstyle{remark}
\newtheorem{remark}[theorem]{Remark}
\numberwithin{equation}{section}
\newcommand{\C}{\mathbb{C}}
\newcommand{\Z}{\mathbb{Z}}
\newcommand{\Q}{\mathbb{Q}}
\newcommand{\diag}{\operatorname{diag}}
\newcommand{\bq}{/\!\!/}
\begin{document}

\begin{center}
\textbf{The classification of compact simply connected biquotients in dimension 6 and 7}

\bigskip

Jason DeVito
\end{center}

\begin{abstract}

We classify all compact simply connected biquotients of dimension $6$ and $7$.  For each $6$-dimensional biquotient, all pairs of groups $(G,H)$ and homomorphisms $H\rightarrow G\times G$ giving rise to it are classified.

\end{abstract}

\section{Introduction}
\label{Intro}

If $M$ is a compact Riemannian manifold, then any subgroup of the isometry group acts on $M$.  When $M$ is homogeneous and the action is free, the quotient, a smooth manifold, is called a biquotient.  Alternatively, biquotients can be defined as quotients of compact Lie groups by two sided actions.  More precisely, given a compact connected Lie group $G$ and a homomorphism $f = (f_1,f_2):H\to G\times G$, there is an induced action of $H$ on $G$ given by $h\ast g = f_1(h)g f_2(h)^{-1}$.  When this action is effectively free, the orbit space, denoted $G\bq H$, naturally has the structure of a smooth manifold and is called a biquotient.

If $G$ is endowed with a bi-invariant metric, then the $H$ action on $G$ is by isometries, and hence induces a metric on the quotient.  By O'Neill's formulas \cite{On1}, this implies that all biquotients carry a metric of non-negative sectional curvature.  Biquotients were introduced by Gromoll and Meyer \cite{GrMe1} when they showed that for a particular embedding of $Sp(1)$ into $Sp(2)\times Sp(2)$, the biquotient $Sp(2)\bq Sp(1)$ is diffeomorphic to an exotic sphere, providing the first example of an exotic sphere with non-negative sectional curvature.  Further, until the recent example due independently to Dearricott \cite{De} and to Grove, Verdiani, and Ziller \cite{GVZ}, all known examples of compact manifolds with positive sectional curvature were diffeomorphic to biquotients.  See \cite{Ber,AW,Wa,Es1,Es2,Baz1}.  Moreover, all known examples of manifolds with almost or quasi-positive curvature are diffeomorphic to biquotients.  See \cite{Wi,PW2,EK,Ke1,Ke2,KT,Ta1,D1,DDRW}.

Biquotients of dimension $6$ were used by Totaro \cite{To2} to construct an infinite family ofnon-negatively curved manifolds with pairwise non-isomorphic rational homotopy types.  Recently, Amann \cite{Am} has used a coarser classification of $7$-dimensional biquotients in the study of $\mathbf{G}_2$ manifolds.

Because each description of a manifold as a biquotient gives rise to a different family of non-negatively curved metrics, it is desirable to not only have a classification of manifolds diffeomorphic to a biquotient, but also to classify which groups give rise to a given manifold.  Totaro \cite{To1} has shown that if $M\cong G\bq H$ is a compact, simply connected biquotient, then $M$ is also diffeomorphic to $G'\bq H'$ where $G'$ is simply connected, $H'$ is connected, and no simple factor of $H'$ acts transitively on any simple factor of $G'$.  We call such biquotients \textit{reduced}, and will classify only the reduced ones.  Further, because we allow our actions to have ineffective kernel, we may replace $H$ by any connected finite cover of itself.  Hence, we may also assume that $H$ is isomorphic to a product of a compact simply connected Lie group and a torus.

Suppose $G_1\bq H_1$ and $G_2\bq H_2$ are both biquotients.  Further, suppose $f:H_2\rightarrow G_1\times G_1$ is a homomorphism defining an action of $H_2$ on $G_1$ which normalizes the $H_1$ action on $G_1$.  Then $f$ gives rise to an action of $H_1\times H_2$ on $G_1\times G_2$ with the $H_2$ factor acting diagonally.  One easily sees that this action is effectively free, and hence, $(G_1\times G_2)\bq (H_1\times H_2)$ is a biquotient.  Noting that this is nothing but the associated bundle to the principal $H_2$-bundle $G_2\rightarrow G_2\bq H_2$, it follows that $(G_1\times G_2)\bq (H_1\times H_2)$ naturally has the structure of a $G_1\bq H_1$ bundle over $G_2\bq H_2$.  We will call such biquotients \textit{decomposable}.

The goal of this paper is to extend our classification of $4$ and $5$ dimensional compact simply connected biquotients \cite{DeV1} to dimension $6$ and $7$.

\begin{theorem}\label{ThmA}Suppose $M^6\cong G\bq H$ is a reduced compact simply connected biquotient.  Then one of the following holds:

a) $G\bq H$ is diffeomorphic to a homogeneous space or Eschenburg's inhomogeneous flag manifold $SU(3)\bq T^2$ \cite{Es1}.

b) $G\bq H$ is decomposable.

c)  $G\bq H$ is diffeomorphic $S^5\times_{T^2} S^3$ or $(S^3)^3\bq T^3$.

\end{theorem}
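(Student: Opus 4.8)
The plan is a case-by-case analysis organized by the pair $(G,H)$. Write $G = G_1 \times \cdots \times G_\ell$ as a product of simple simply connected compact groups and $H = K \times T^k$ with $K$ simply connected, so that $\dim G - \dim H = 6$; since the ineffective kernel of the $H$-action must be finite, this action is almost free, $H \to G \to G\bq H$ is a rational fibration, and $M := G\bq H$ is a closed simply connected rationally elliptic $6$-manifold. The first task is to cut the possibilities for $(G,H)$ down to a finite list. From the homotopy sequence $\pi_2(M)\otimes\Q \cong \pi_1(H)\otimes\Q = \Q^k$ together with the fact that every rationally elliptic simply connected closed $6$-manifold has $b_2 \le 3$ one gets $k \le 3$; from the surjection $\pi_3(G)\otimes\Q \to \pi_3(M)\otimes\Q$ one bounds $\ell$ in terms of the number of simple factors of $K$; and a maximal torus $T_H$ of $H$ must also act almost freely, so conjugating $f_1(T_H)$ and $f_2(T_H)$ into a common maximal torus of $G$ gives $\operatorname{rank}(H) \le 2\operatorname{rank}(G)$. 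These, together with direct effective-freeness obstructions — for example, an $SU(2)$ factor of $H$ can never act nontrivially on an $SU(2)$ factor of $G$, because every nontrivial non-transitive two-sided $SU(2)$-action on $SU(2)$ is conjugation-like and therefore has positive-dimensional isotropy at some points — leave only finitely many pairs, headed by $(SU(2)^2,\{e\})$, $(SU(3),T^2)$, $(SU(2)^3,T^3)$, $(Sp(2), SU(2)\times T^1)$, $(SU(3)\times SU(2), SU(2)\times T^2)$, and running up through the larger pairs such as $(G_2, SU(3))$ and $(Spin(7), Spin(6))$ that realize the $6$-sphere.

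For each such pair I would classify the homomorphisms $f = (f_1,f_2)\colon H \to G\times G$ up to the natural equivalences — conjugation in $G\times G$, automorphisms of $G$ and of $H$, and the involution swapping the two factors of $G\times G$ — keeping only those whose induced action is effectively free. Effective freeness is controlled by the criterion that $f_1(h)$ and $f_2(h)$ be non-conjugate in $G$ for $h$ outside the (finite) ineffective kernel, checked on appropriate subgroups. For the torus part of $H$ this reduces to a concrete problem about sublattices of the integral lattice of a maximal torus of $G\times G$, which is organized via Eschenburg's reduction of such lattice data to normal form under the Weyl group; for the semisimple part, the reducedness hypothesis — no simple factor of $H$ acts transitively on a simple factor of $G$ — together with the isotropy estimates above sharply limits the possibilities. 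This enumeration, and especially the lattice bookkeeping for the torus-heavy pairs, is the most laborious part of the argument.

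Finally, each surviving biquotient must be identified. Those that are homogeneous are recognized by comparison with the known classification of simply connected homogeneous $6$-manifolds, and the decomposable ones — of the form $(G_1\times G_2)\bq(H_1\times H_2)$, that is, a $G_1\bq H_1$-bundle over $G_2\bq H_2$ — are handled by identifying fibre and base among the $4$- and $5$-dimensional biquotients classified in \cite{DeV1}. The pair $(SU(3),T^2)$ is dispatched by invoking Eschenburg's classification of $T^2$-actions on $SU(3)$, which yields exactly the homogeneous flag manifold and his inhomogeneous flag. For any remaining biquotient I would compute its integral cohomology ring, its second Stiefel--Whitney class, its first Pontryagin class, and the trilinear cup-product form on $H^2$, and apply the Wall--Jupp--\v{Z}ubr diffeomorphism classification of closed simply connected $6$-manifolds to match it; the conclusion should be that it is diffeomorphic either to a space already in (a) or (b) or to one of the two exceptional examples $S^5\times_{T^2}S^3$ and $(S^3)^3\bq T^3$. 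The main obstacle, beyond the sheer length of the enumeration, is precisely this last step: verifying by explicit invariant computations that the two exceptional manifolds are genuinely neither homogeneous nor decomposable, so that they must be listed on their own, and that no additional diffeomorphism type slips through — which requires the cohomology and characteristic-class calculations for the borderline cases to be carried out completely.
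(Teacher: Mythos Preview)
Your overall architecture matches the paper's: enumerate the admissible pairs $(G,H)$, classify the effectively free actions for each, and identify the quotients using Wall--Jupp--\v{Z}ubr. But there is a genuine gap in the first step. The constraints you list --- $k\le 3$, the relation between $\ell$ and the number of simple factors of $K$ via $\pi_3$, and $\operatorname{rank} H\le 2\operatorname{rank} G$ --- do not by themselves force a finite list. Nothing you wrote rules out, say, $G=SU(n)$ for large $n$ together with a suitable connected $H$ of codimension $6$: the dimension and rank constraints are satisfied, and the $\pi_3$ relation only ties $\ell$ to the number of simple factors of $H$, it does not bound either. The phrase ``direct effective-freeness obstructions'' is doing work you have not specified.

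The missing ingredient is Totaro's structure theorem (Theorem~\ref{Tclass} here): in a \emph{reduced} biquotient each simple factor $G_i$ of $G$ must contribute a degree to $\pi_{\text{odd}}(G\bq H)_\Q$, and that degree is either the top degree of $G_i$ or, in a short explicit list of exceptions, the second-highest. Since a simply connected rationally elliptic $6$-manifold has $\pi_{\text{odd}}$ concentrated in degrees $\le 7$ (Proposition~\ref{Qclass}, Table~\ref{table:Qclass}), this bounds both the number and the individual sizes of the simple factors of $G$, and then $H$ is pinned down by dimension and Proposition~\ref{simpfact}. The paper accordingly organizes the enumeration by first fixing the rational homotopy type of $M$ and then reading off the compatible $(G,H)$ from Totaro's theorem (Theorem~\ref{gplist}, Table~\ref{table:gplist}). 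Your list of candidate pairs reflects this gap: it misses, for instance, the pairs with $G_1\in\{SU(4),Sp(2),SO(7),SO(8)\}$ that arise in the $S^2\times S^4$ rational type, while including $(Sp(2),SU(2)\times T^1)$, which in fact has singly generated rational cohomology and so is already covered by the Kapovitch--Ziller/Totaro classification invoked in case~(a).

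One smaller point: the theorem as stated does not assert that cases (a), (b), (c) are disjoint, so you do not need to prove that the manifolds in (c) are ``genuinely neither homogeneous nor decomposable'' to finish; indeed many of the $S^5\times_{T^2}S^3$ and $(S^3)^3\bq T^3$ examples \emph{are} decomposable (see Propositions~\ref{s2cp2} and~\ref{matlist}). What needs checking is only that every surviving action lands in one of the three bins.
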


The manifolds in b) consist of both of the linear $S^4$ bundles over $S^2$, all 3 linear $\C P^2$ bundles over $S^2$, and infinitely many linear $S^2$ bundles with base a $4$-dimensional biquotient $B^4$, i.e., $S^4$, $\mathbb{C}P^2$, $S^2\times S^2$, and $\mathbb{C}P^2\# \pm \mathbb{C}P^2$, see Propositions \ref{calc2}, \ref{cp2s2actions}, and \ref{matlist} and the following discussions.  In particular every such bundle over $B^4$ where the structure group reduces to a circle is a decomposable biquotient, see Propositions \ref{s2buncp2} and \ref{bundoverb4}.  In c), the $T^2$ and $T^3$ actions are linear and there are only finitely many actions which do not give rise to decomposable biquotients, see Propositions \ref{cp2s2actions} and \ref{matlist}.  The manifold $\mathbb{C}P^3\#\mathbb{C}P^3$ arises in case $c)$.

In dimension $7$, we prove

\begin{theorem}\label{ThmB}

Suppose $M^7\cong G\bq H$ is a reduced compact simply connected biquotient.  Then one of the following holds:

a) $G\bq H$ is diffeomorphic to a homogeneous space, an Eschenburg Space $SU(3)\bq S^1$ \cite{Es1}, or the Gromoll-Meyer sphere $Sp(2)\bq Sp(1)$ \cite{GrMe1}.

b) $G\bq H$ is decomposable.

c) $G\bq H$ is diffeomorphic to $S^5\times_{S^1}S^3$, $(SU(3)/SO(3))\times_{S^1}S^3$, or $(S^3)^3\bq T^2$.

\end{theorem}

The manifolds in b) include two nontrivial linear $S^3$ bundles over $S^4$ (both of which have cohomology rings isomorphic to that of $S^3\times S^4$), both linear $S^5$ bundles over $S^2$, and infinitely many $S^3$ bundles over $\mathbb{C}P^2$.  Unlike dimension $6$, in c), there are infinitely many examples which are not decomposable.  The actions on spheres in c) are all linear.

In both dimension $6$ and $7$, we also classify all pairs of groups $(G,H)$ and homomorphisms $H\rightarrow G\times G$ which give rise to reduced compact simply connected biquotients.  See Table \ref{table:gplist} for the list of groups.

Biquotients of the form $(S^3)^3\bq T^3$ are particularly interesting, see Proposition \ref{s3modt3}.  Biquotients of this form fall into three infinite decomposable families and four sporadic examples.  They all have cohomology groups isomorphic to those of $(S^2)^3$, but the ring structure, first Pontryagin class, and second Stiefel-Whitney class distinguish them.

It turns out that in dimensions at most $6$, the cohomology rings and characteristic classes completely determine the diffeomorphism type of a compact simply connected biquotient.  Hence, we can specify for a given $6$ dimensional biquotient $G\bq H$ all the other biquotients which are diffeomorphic to it.  It follows from our classification that, with the exceptions of biquotients diffeomorphic to a bundle over $S^2$ with fiber a $4$-dimensional biquotient, all $6$-dimensional biquotients have at most finitely many descriptions as reduced biquotients.  In dimension $7$, however, the Gromoll-Meyer sphere \cite{GrMe1} and Eschenburg spaces \cite{Es1,Kr,KS1,KS2} show that the cohomology rings and characteristic classes no longer classify the diffeomorphism type of the biquotient.

The outline of the paper is as follows.  In Section 2, we will first cover some preliminary facts about biquotients and their topology, allowing us to classify the possible rational homotopy groups of a biquotient of dimension $6$ or $7$.  In Section 3, we consider each of the possible sequences of rational homotopy groups and, using a theorem of Totaro, find a finite list of pairs of groups $(G,H)$ for which a reduced biquotient $G\bq H$ can have these rational homotopy groups.  In Section 4, we choose several representative pairs $(G,H)$ and, for each pair, classify all effectively free reduced biquotient actions of $H$ on $G$ and, when possible, classify the diffeomorphism types of $G\bq H$.

This paper is a portion of the author's Ph.D. thesis and he is greatly indebted to Wolfgang Ziller for helpful discussions and guidance.  He would also like to thank the referees for many helpful suggestions, including a vast simplification of the proof of Proposition \ref{bundoverb4}.

\section{Preliminaries}
\label{Prelim}

In this section, we review basic facts about biquotients, their rational homotopy theory, and the computation of their cohomology rings and characteristic classes.

\subsection{Background on biquotients}
\label{biq}

A homomorphism $f = (f_1, f_2):H\rightarrow G\times G$, which we will always assume to have finite kernel, defines an action of $H$ on $G$ by $h\ast g = f_1(h)g f_2(h)^{-1}$.

An action is called \textit{effectively free} if whenever any $h\in H$ fixes any point of $G$ then it fixes all points of $G$.  It is called \textit{free} if the only element which fixes any point is the identity. The following proposition is immediate.

\begin{proposition}\label{freetest}
A biquotient action of $H$ on $G$ is effectively free iff whenever $f_1(h)$ is conjugate to $f_2(h)$ in $G$, then $f_1(h) = f_2(h)\in Z(G)$.  Likewise, a biquotient action of $H$ on $G$ is free iff whenever $f_1(h)$ is conjugate to $f_2(h)$ in $G$, then $f_1(h) = f_2(h) = e\in G$.
\end{proposition} 

It follows easily from this that a biquotient action of $H$ on $G$ is (effectively) free iff the action is (effectively) free when restricted to a maximal torus of $H$.  As observed in \cite{Es2}, it follows that rank of $H$ can be at most the rank of $G$.  We will henceforth only consider pairs $(G,H)$ with the rank of $H$ at most that of $G$.

As mentioned in the introduction, when the action of $H$ on $G$ induced by $f$ is effectively free, the quotient $G\bq H$ naturally has the structure of a smooth manifold and is called a biquotient.  If $H = H_1\times H_2$ with each factor embedded into a factor of $G$, the biquotient is sometimes denote $H_1\backslash G/H_2$.  Biquotients were systematically studied in Eschenburg's Habilitation \cite{Es2}.  Also, Totaro \cite{To1} showed that if $M$ is compact, simply connected, and diffeomorphic to a biquotient, then $M$ is diffeomorphic to a biquotient $G\bq H$ where $G$ is compact, simply connected, and semisimple, $H$ is connected, and no simple factor of $H$ acts transitively on any simple factor of $G$.  By definition, a simple factor of $H$ is the projection of a simple factor from the universal cover $\tilde{H}$ to $H$.  Hence, we will always assume our biquotients to have this \textit{reduced} form.  Moreover, since we allow our maps $f:H\rightarrow G\times G$ to have finite kernel, we will assume that $H$ is given as a product of simple Lie groups with a torus.

In order to further reduce the scope of the classification, we will use the following fact:

\begin{proposition}\label{diffeostabilize} Consider the action induced by $f:H\rightarrow G\times G$.  Then, after any of the following modifications of $f$, the new induced action is equivalent to the initial action.

(1)  For any automorphism $f'$ of $H$, replace $f$ with $f\circ f'$

(2)  For any element $g=(g_1,g_2)\in G\times G$, replace $f$ with $C_g\circ f$, where $C_g$ denotes conjugation.

(3)  For any automorphism $f'$ of $G$, replace $f$ with $(f',f')\circ f$.

(4)  If $f':G\times G\rightarrow G\times G$ interchanges the two factors, then replace $f$ with $f'\circ f$.

\end{proposition}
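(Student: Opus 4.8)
The plan is to recall the notion of equivalence of biquotient actions and then verify each item by writing down an explicit intertwining pair $(\phi,\psi)$. Two actions, of $H$ on $G$ by $h\ast g = f_1(h)gf_2(h)^{-1}$ and of $H'$ on $G'$ by $h'\star g' = \tilde f_1(h')g'\tilde f_2(h')^{-1}$, should be called equivalent when there are a diffeomorphism $\phi\colon G\to G'$ and an isomorphism $\psi\colon H\to H'$ with $\phi(h\ast g) = \psi(h)\star\phi(g)$ for all $h,g$; such a $\phi$ is then equivariant and descends to a diffeomorphism $G\bq H\to G'\bq H'$, so the two biquotients are diffeomorphic. Note that each of the three operations is the composition of $f$ with an isomorphism -- of $H$ in case 1, and of $G\times G$ in cases 2 and 3 (recall $C_{(g_1,g_2)}(a,b) = (g_1ag_1^{-1},g_2bg_2^{-1})$) -- so the modified homomorphism again has finite kernel, and effective freeness of the new action will follow from the equivalence produced below (or directly from the conjugacy criterion recorded above). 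Thus everything reduces to exhibiting the pairs $(\phi,\psi)$.

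For case 1 the modified action is $h\star g = f_1(f'(h))gf_2(f'(h))^{-1} = f'(h)\ast g$, so we may take $\phi = \Id_G$ and $\psi = f'$. For case 2, writing $g = (g_1,g_2)$, the modified action is $h\star x = g_1f_1(h)g_1^{-1}\,x\,g_2f_2(h)^{-1}g_2^{-1}$; taking $\psi = \Id_H$ and the diffeomorphism $\phi(x) = g_1^{-1}xg_2$ of $G$, a one-line computation gives $\phi(h\star x) = f_1(h)\,g_1^{-1}xg_2\,f_2(h)^{-1} = h\ast\phi(x)$. For case 3 the modified action is $h\star x = f'(f_1(h))\,x\,f'(f_2(h))^{-1}$; taking $\psi = \Id_H$ and $\phi = (f')^{-1}$, the fact that $(f')^{-1}$ is a homomorphism yields $\phi(h\star x) = f_1(h)\,(f')^{-1}(x)\,f_2(h)^{-1} = h\ast\phi(x)$ (equivalently, $f'$ itself intertwines the original action with the modified one). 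In each case only the definition of the biquotient action and the multiplicativity of $f'$ are used.

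There is no real obstacle here: the proposition is a bookkeeping lemma that will let us normalize the homomorphisms $f$ in the enumerations of Sections 3--5. The only point that needs a little care is the placement of $g_1$ and $g_2$ in case 2 -- they must be conjugated onto opposite and correctly chosen sides of $x$ so that the extra factors introduced by $C_{(g_1,g_2)}$ cancel -- and, relatedly, fixing once and for all the componentwise convention for $C_{(g_1,g_2)}$ on $G\times G$.
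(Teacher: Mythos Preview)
Your proof is correct, and the paper itself does not supply one: Proposition~\ref{diffeostabilize} is stated without proof, as the author regards it as an elementary observation. Your explicit intertwining pairs $(\phi,\psi)$ do the job in each case; the only minor quibble is that in your displayed computations you verify $\phi(h\star x)=\psi(h)\ast\phi(x)$ (from the modified action to the original) rather than the direction $\phi(h\ast g)=\psi(h)\star\phi(g)$ you set up in your definition, but since equivalence is symmetric this is harmless.
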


\begin{proof}

For (1), note that a biquotient action is determined by the image of $f$ and that $f$ and $f\circ f'$ have the same image.

For (2), the map $G\rightarrow G$ sending $g$ to $g_1 \, g\, g_2^{-1}$ is an equivariant diffeomorphism.  For (3), the map $f':G\rightarrow G$ is an equivariant diffeomorphism.  Finally, for (4), the inverse map $i:G\rightarrow G$ with $i(g) = g^{-1}$ is an equivariant diffeomorphism.

\end{proof}

We will only classify biquotients and the corresponding actions up to these four modifications.

\

There is a strong link between representation theory and (2) of Proposition \ref{diffeostabilize}, coming from Malcev's Theorem \cite{Ma}:

\begin{theorem}  Suppose $G\in\{SU(n), Sp(n), SO(n)\}$ and let $f,g:H\rightarrow G$ be homomorphisms, thought of as $n$-dimensional complex, quaternionic, or real representations.  If $f$ and $g$ determine equivalent representations, then the images in $G$ are conjugate, except possibly when $G = SO(2n)$.  In this case, the images are always conjugate in $O(2n)$ and conjugate in $SO(2n)$ if at least one irreducible subrepresentation is odd dimensional.  Conversely, if the images of $f$ and $g$ are conjugate, then there is an automorphism $\phi:H\rightarrow H$ for which $f$ and $g\circ \phi$ determine equivalent representations.

\end{theorem}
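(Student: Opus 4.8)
The plan is to prove both directions with the \emph{unitarian trick} — replacing an intertwiner of representations by one that already lies in the compact group — supplemented, in the orthogonal case, by a parity analysis of a determinant. For the forward direction I would argue as follows. Suppose $A$ is an invertible complex- (resp.\ quaternion-, real-) linear map with $A\,f(h) = g(h)\,A$ for every $h\in H$. Taking the adjoint with respect to the standard positive-definite Hermitian (resp.\ quaternionic Hermitian, Euclidean) form $\langle\cdot,\cdot\rangle$ and using that $f(H)$ and $g(H)$ preserve it, one sees that $A^{*}A$ is a positive self-adjoint operator commuting with $f(H)$; hence its square root $P := (A^{*}A)^{1/2}$ commutes with $f(H)$ as well, and $U := AP^{-1}$ is an isometry of $\langle\cdot,\cdot\rangle$ with $U f(h) = g(h) U$. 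Thus $U$ lies in $U(n)$, $Sp(n)$, or $O(n)$ respectively. When $G = Sp(n)$ this already exhibits $f$ and $g$ as conjugate in $G$. When $G = SU(n)$, one replaces $U$ by $\lambda U$ for a unit scalar $\lambda$ with $\lambda^{n}\det U = 1$; since $f(H),g(H)\subseteq SU(n)$ the scalar $\lambda$ is an intertwiner, so $\lambda U\in SU(n)$ conjugates $f$ to $g$.

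For $G = SO(n)$ the trick yields $U\in O(n)$ with $Uf(h)U^{-1} = g(h)$, and only the condition $\det U = 1$ can fail. If $f$ has an irreducible subrepresentation $W$ of odd dimension, I would write $\R^{n} = W\oplus W^{\perp}$ (an $f$-invariant orthogonal splitting) and set $C := (-\Id_{W})\oplus\Id_{W^{\perp}}$; then $C\in O(n)$ commutes with $f(H)$ and $\det C = (-1)^{\dim W} = -1$, so $UC\in SO(n)$ conjugates $f$ to $g$. Since this applies whenever $n$ is odd, it accounts for the clause ``except possibly when $G = SO(2n)$''. If, on the other hand, every irreducible summand of $f$ is even-dimensional, no such correction exists: by Schur's lemma over $\R$ the commutant of $f(H)$ in $GL(n,\R)$ is a product of full matrix algebras over $\R$, $\C$, or $\mathbb{H}$, and every invertible element of such an algebra has \emph{positive} real determinant (automatically for the complex and quaternionic blocks, and for the real blocks because the corresponding real irreducible is then even-dimensional). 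As the intertwiners from $f$ to $g$ form a single coset of this commutant, the sign of the determinant of an intertwiner is a well-defined invariant of the pair $(f,g)$, equal to $+1$ exactly when $f$ and $g$ are conjugate in $SO(n)$; that it can equal $-1$ is witnessed by left versus right quaternion multiplication of $Sp(1)$ on $\R^{4}\cong\mathbb{H}$, which are equivalent as real $4$-dimensional representations and conjugate in $O(4)$ but not in $SO(4)$ (because $N_{O(4)}(f(H))\subseteq SO(4)$). This gives the ``only if'' half of the $SO(2n)$ statement.

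For the converse, suppose $a\,f(H)\,a^{-1} = g(H)$ for some $a\in G$. Replacing $g$ by $h\mapsto a^{-1}g(h)a$, I may assume $f(H) = g(H) =: K$, a compact connected subgroup of $G$, so that $f,g\colon H\to K$ are surjections with finite — hence, $H$ being connected, central — kernels. Then $df$ and $dg$ are Lie-algebra isomorphisms $\mathfrak{h}\to\mathfrak{k}$, so $\theta := (dg)^{-1}\circ df$ is an automorphism of $\mathfrak{h}$, which being an algebra automorphism preserves the splitting $\mathfrak{h} = \mathfrak{h}_{s}\oplus\mathfrak{z}$ into the semisimple ideal and the centre. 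The restriction of $\theta$ to $\mathfrak{h}_{s}$ integrates to an automorphism of the simply connected semisimple factor of $H$, and its restriction to $\mathfrak{z}$ integrates to an automorphism of the toral factor after passing, if necessary, to a finite cover of $H$ (harmless, since finite kernels are permitted). Assembling the two pieces yields $\phi\in\operatorname{Aut}(H)$ with $d\phi = \theta$, hence $f = g\circ\phi$.

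I expect the orthogonal case of the forward direction to be the main obstacle: controlling $\det U$, and in particular verifying that the real determinant is positive on the commutant precisely under the even-dimensionality hypothesis, requires the full real form of Schur's lemma — the trichotomy of real irreducibles into real, complex, and quaternionic type — together with a careful bookkeeping of which multiplicity blocks can contribute an orientation-reversing element. The unitarian trick itself, the $SU(n)$ and $Sp(n)$ cases, and the converse are by comparison routine.
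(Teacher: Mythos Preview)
The paper does not supply a proof of this statement; it is quoted with a citation to Malcev, so there is no in-paper argument to compare against. I assess your proposal on its own.

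Your forward direction is correct and is the standard argument. The polar decomposition produces an intertwiner $U$ already in $U(n)$, $Sp(n)$, or $O(n)$; the scalar correction handles $SU(n)$; and in the orthogonal case your use of the real Schur trichotomy is exactly right. The key computation---that on an isotypic block $V^{\oplus m}$ of real type the commutant $M_m(\R)$ acts with real determinant $(\det B)^{\dim V}$, hence positively when $\dim V$ is even, while complex and quaternionic blocks always contribute positively---shows the sign of $\det$ is constant on the coset of intertwiners. For the ``only if'' you actually need, for \emph{every} $f$ with no odd-dimensional summand, an equivalent $g$ that is not $SO$-conjugate to it; this follows from your sign argument by taking $g=UfU^{-1}$ for any $U\in O(2n)\setminus SO(2n)$, and your $Sp(1)$ example is merely an illustration of this.

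The converse, however, has a genuine gap on the toral factor. You reduce to integrating the Lie-algebra automorphism $\theta=(dg)^{-1}\circ df$ to a group automorphism of $H$; this succeeds on the simply connected semisimple part, but on the torus it requires $\theta|_{\mathfrak z}$ to preserve the lattice $\ker(\exp)$, which it need not. Concretely, take $H=S^1$, $G=SU(2)$, $f(z)=\diag(z,\overline z)$ and $g(z)=\diag(z^{2},\overline z^{2})$. Both have finite kernel and identical image (the maximal torus), yet the only automorphisms of $S^1$ are $z\mapsto z^{\pm1}$, and neither gives $f=g\circ\phi$. Your proposed fix of passing to a finite cover does not help: every connected finite cover of $S^1$ is again $S^1$, and the lifted maps $w\mapsto w^{n}$, $w\mapsto w^{2n}$ exhibit the same obstruction. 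Thus the converse as literally stated fails without an extra hypothesis such as $\ker f=\ker g$ (whence $f,g$ descend to isomorphisms $H/\ker f\to K$ and $\phi$ is immediate) or simple connectedness of $H$.
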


\subsection{Rational homotopy theory}
\label{rht}

One of the main tools involved in the classification of biquotients is rational homotopy theory.  A simply connected compact manifold $M$ is said to be rationally elliptic if $\dim\pi_\ast(M)_\mathbb{Q} < \infty$ where $\pi_\ast(M)_\Q$ is shorthand for $\pi_\ast(M)\otimes \mathbb{Q}$.  All Lie group are known to be rationally elliptic with all even degree rational homotopy groups trivial.  Further, given any fiber bundle $F\rightarrow E\rightarrow B$, if two of the spaces are rationally elliptic, so is the third by the long exact sequence in rational homotopy groups.  Since any biquotient $G\bq H$ with ineffective kernel $H'$ gives rise to a principal $H/H'$-bundle $H/H'\rightarrow G\rightarrow G\bq H$, it follows that all biquotients are rationally elliptic.

Using rational homotopy theory, Kapovitch and Ziller \cite{KZ} and Totaro \cite{To1} classified all biquotients $G\bq H$ for which $H^\ast(G\bq H;\Q)$ is generated by a single element, i.e., all those biquotients which are rationally homotopy equivalent to a sphere or projective space.  Hence, we will focus our classification on the remaining biquotients.

It turns out the topology of a simply connected rationally elliptic manifold is very constrained.  Let $\pi_{\text{odd}}(M)_\Q$ denote the direct sum of the odd degree rational homotopy groups of $M$, similarly for $\pi_{\text{even}}(M)_\Q$.  Using the notation $|x| = k$ if $x \in \pi_k(M)_\Q$, we have the following theorem, see \cite{FeHaTh}, p. 434:

\begin{theorem}\label{Qrestriction}Let $M^n$ be a simply connected rationally elliptic manifold with $x_i$ a graded basis of $\pi_{odd}(M)_\Q$ and $y_j$ a graded basis of $\pi_{even}(M)_\Q$.

\

\begin{tabular}{ll}

(1)& $\sum |y_j|\leq n$\\

(2)& $n = \sum |x_i| - \sum(|y_j| - 1)$ \\

(3)& $\chi(M) = \sum (-1)^i \dim(H_i(M)_\Q) \geq 0$.\end{tabular}

\end{theorem}

Further, there is a strengthened version of the Hurewicz theorem for rational coefficients, which is proven in \cite{KK}.

\begin{theorem}\label{QHurewicz}  Suppose $X$ is a simply connected topological space with trivial $i$-th rational homotopy group for all $i\leq r$.  Then the Hurewicz map $\pi_k(M)_\Q\rightarrow H_k(M;\Q)$ induces an isomorphism for $k\leq 2r$ and a surjection when $k = 2r+1$.

\end{theorem}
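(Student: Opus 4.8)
The plan is to argue through minimal Sullivan models, which is available since a simply connected space is nilpotent; I will use freely the standard dictionary between $X$ and its minimal model as recorded in \cite{FeHaTh}. (In the applications of interest $X$ is a compact manifold, hence of finite rational type, so the identifications below are literal; in general one phrases them with rational duals.) Let $(\Lambda V, d)$ be the minimal Sullivan model of $X$. The hypothesis $\pi_i(X)_\Q = 0$ for $i \leq r$ translates, under the natural isomorphism $V^i \cong \pi_i(X)_\Q^\ast$, into $V^i = 0$ for all $i \leq r$; and minimality says that $d$ is decomposable, $dV \subseteq \Lambda^{\geq 2} V$.

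The heart of the argument is a trivial degree count. Any product of two or more generators has degree at least $2(r+1) = 2r+2$, so $(\Lambda^{\geq 2} V)^k = 0$ for $k \leq 2r+1$, and therefore $(\Lambda V)^k = V^k$ in every positive degree $k \leq 2r+1$. Since $d$ takes values in $\Lambda^{\geq 2} V$, this forces $d$ to vanish on $(\Lambda V)^k$ for every $k \leq 2r$; consequently in degrees $\leq 2r$ every element is a cocycle and there are no nonzero coboundaries, so $H^k(\Lambda V, d) = V^k$ for $1 \leq k \leq 2r$. In degree $2r+1$ there are still no coboundaries (they would have to come from degree $2r$, where $d = 0$), so $H^{2r+1}(\Lambda V, d) = \ker\bigl(d\colon V^{2r+1} \to (\Lambda^2 V)^{2r+2}\bigr)$, a subspace of $V^{2r+1}$.

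It remains to recognize the Hurewicz map inside this picture. The projection $\zeta\colon \Lambda V \to V$ onto the space of generators annihilates decomposables, so by minimality it carries $d$-coboundaries to $0$ and descends to a map $\bar\zeta\colon H^\ast(\Lambda V, d) \to V^\ast$; under the identifications $H^k(X;\Q) \cong H^k(\Lambda V, d)$ and $V^k \cong \pi_k(X)_\Q^\ast$, this $\bar\zeta$ is exactly the $\Q$-dual of the Hurewicz homomorphism $\pi_k(X)_\Q \to H_k(X;\Q)$ (see \cite{FeHaTh}, Ch.\ 15). By the preceding paragraph $\bar\zeta$ is an isomorphism for $k \leq 2r$ and injective for $k = 2r+1$; dualizing over $\Q$ gives that the Hurewicz map is an isomorphism for $k \leq 2r$ and a surjection for $k = 2r+1$. (For $k \leq r$ all the groups involved are $0$, consistently.)

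The one non-formal ingredient is the identification, in the last paragraph, of $\bar\zeta$ with the dual Hurewicz map; this is precisely where the minimal model must be compared with the actual space, and is the point one cites rather than reproves. For completeness I note an alternative, model-free proof: induct on $r$ via the path-loop fibration $\Omega X \to PX \to X$, observing $\pi_i(\Omega X)_\Q = \pi_{i+1}(X)_\Q = 0$ for $i \leq r-1$, and run the rational Serre spectral sequence — which abuts to $\Q$, the rational homology of the contractible total space — to transport the inductive isomorphism for $\Omega X$ up to $X$. In that approach the delicate point is the spectral-sequence bookkeeping required to propagate the isomorphism through the whole range $k \leq 2r$, rather than only in the first nonvanishing degree as in Serre's classical mod-$\mathcal{C}$ Hurewicz theorem.
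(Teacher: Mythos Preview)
The paper does not prove this theorem at all: it is simply stated as a known result with a reference to \cite{MS}, p.~207, and then used as a black box in the proof of Proposition~\ref{Qclass}. So there is no ``paper's own proof'' to compare against.

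Your argument via the minimal Sullivan model is correct and is one of the standard proofs. The degree count is the right idea: with $V^i = 0$ for $i \leq r$, decomposables live in degrees $\geq 2r+2$, so $(\Lambda V)^k = V^k$ for $1 \leq k \leq 2r+1$ and $d$ vanishes there through degree $2r$; this immediately gives $H^k(\Lambda V,d) = V^k$ for $k \leq 2r$ and $H^{2r+1} \subseteq V^{2r+1}$. The identification of the induced map $\bar\zeta$ with the dual of the rational Hurewicz map is indeed the one nontrivial external input, and you cite it appropriately. Your parenthetical about finite rational type is well placed: for the paper's applications (compact biquotients) this is automatic, so the dualization step is unproblematic. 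The alternative Serre spectral sequence sketch you mention is also a valid route and is closer in spirit to the classical mod-$\mathcal{C}$ argument the cited reference uses.
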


Using these theorems, we prove

\begin{proposition}\label{Qclass}  Let $M$ be a compact simply connected rationally elliptic manifold of dimension $6$ or $7$ which is not rationally equivalent to either a sphere or projective space.  Then the rational homotopy groups of $M$ are abstractly isomorphic to the rational homotopy groups of a product of compact rank one symmetric spaces.  The dimensions of these rational homotopy groups are listed in Table \ref{table:Qclass}.  

\begin{table}[h]

\caption{Possible dimensions of the rational homotopy groups of a rationally elliptic 6- and 7-manifold}\label{table:Qclass}

\begin{center}
\begin{tabular}{|r|r|r|r|r|r|c|}
\hline
$\pi_2$&$\pi_3$&$\pi_4$&$\pi_5$&$\pi_6$&$\pi_7$ &Example\\
\hline
\hline
 &2& & & & &  $S^3\times S^3$\\
\hline
1&1&1& & &1&  $S^2\times S^4$\\
\hline
2&1& &1& & &  $S^2\times \C P^2$\\
\hline
3&3& & & & &  $S^2\times S^2\times S^2$\\
\hline
\hline
 &1&1& & &1&       $S^3\times S^4$\\
\hline
1&1& &1& & &   $S^2\times S^5$ and $\C P^2\times S^3$\\
\hline
2&3& & & & &    $S^2\times S^2\times S^3$\\
\hline

\end{tabular}

\end{center}

\end{table}

\end{proposition}

\begin{proof}

Pavlov \cite{Pa2} proves this in dimension 6, so we focus on dimension 7.

We first show that $M$ cannot be rationally $3$-connected unless it has the same rational homotopy groups as $S^7$.  If $M$ is rationally $3$-connected, that is, $\pi_k(M)_\Q = 0$ for $k\leq 3$, then Theorem \ref{QHurewicz} implies $H_k(M;\Q) =0$ for $k\leq 3$.  Then Poincar\'e duality implies $M$ has the rational cohomology ring of $S^7$.  Again, since $M$ is $3$-connected, Theorem \ref{QHurewicz} implies the map $\pi_k(M)_Q\rightarrow H_k(M;\Q)$ is an isomorphism for $k\leq 6$, so $\pi_k(M)_\Q = 0$ for $k\leq 6$.  By Theorem \ref{Qrestriction}(1), there can be no non-trivial even degree rational homotopy groups.  Then, by Theorem \ref{Qrestriction}(2), $\dim\pi_7(M)_\Q=1$ with all other odd rational homotopy groups vanishing.  Thus, in this case, the rational homotopy groups of $M$ are isomorphic to those of $S^7$.

So, we may assume $M$ is not rationally $3$-connected.  If $M$ is rationally $2$-connected, then by Theorem \ref{QHurewicz}, $\pi_4(M)_\Q$ is isomorphic to $H_4(M;\Q)$.  But, via Poincar\'e duality, $H_4(M;\Q)\cong H_3(M;\Q)$ and $H_3(M;\Q)\cong \pi_3(M)_\Q$ by Theorem \ref{QHurewicz}, so $\dim(\pi_4(M)_\Q) =\dim(\pi_3(M)_\Q)\geq 1$.  By Theorem \ref{Qrestriction}(1), $\dim(\pi_4(M))_\mathbb{Q}\leq 1$, and there can be no other non-trivial even degree rational homotopy groups.  By Theorem \ref{Qrestriction}(2), one has $7 = 3 + \sum_{|x_i|  \geq 5}|x_i| - (4-1)$, so $\dim \pi_7(M)_\mathbb{Q} = 1$, but all other odd degree rational homotopy groups vanish.  In particular, $M$ has the same rational homotopy groups as $S^3\times S^4$.

Hence, we may assume $\pi_2(M)_\Q \neq 0$.  Using a similar analysis as in the previous two cases, one easily sees that $M$ either has the same rational homotopy groups as $(S^2)^2\times S^3$, $S^2\times S^5$ (which are the same as those of $S^3\times \mathbb{C}P^2$), or $\pi_2(M)_\Q \cong \Q$, $\pi_3(M)_\Q \cong \Q^2$, $\pi_4(M)_\Q \cong \Q$, $\pi_5(M)_\Q \cong \Q$, with all other rational homotopy groups are trivial.

However, this last case cannot occur.  For, if $\pi_2(M)_\Q \cong \Q$, $H_2(M;\Z)$ contains a $\Z$-summand.  Let $E$ be the total space of the principal $S^1$-bundle corresponding to a generator of this summand, which is simply connected as shown in \cite{Ko}, and is also rationally elliptic.  The long exact sequence of homotopy groups shows $\pi_k(E)_\Q \cong \pi_k(M)_\Q$ except that $\pi_2(E)_\Q = 0$.  Theorem \ref{QHurewicz} and Poincar\`e duality imply that $H_3(E;\Q) \cong \Q^2 \cong H_5(E;\Q)$ and $H_4(E;\Q) \cong \Q$.  Hence, $\chi(E) <0$, giving a contradiction to Theorem \ref{Qrestriction}(3).
\end{proof}

\subsection{Cohomology ring and characteristic classes}
\label{top}

We now outline techniques, due to Eschenburg \cite{Es3} and Singhof \cite{Si1}, generalizing results of Borel and Hirzebruch \cite{BH1}, for computing the cohomology rings and characteristic classes of biquotients.  

If $G$ is any compact Lie group, we will let $EG$ denote a contractible space on which $G$ acts freely and $BG = EG/G$ will be the classifying space of $G$.  If $f=(f_1,f_2):H\rightarrow G^2$ defines a free biquotient action on $G$, then the principal $H$-bundle $H\rightarrow G\rightarrow G\bq H$ is classified by a map $\phi_H:G\bq H\rightarrow BH$.

Eschenburg \cite{Es3} has shown

\begin{proposition}\label{cohom} Suppose $f:H\rightarrow G\times G$ induces a free biquotient action of $H$ on $G$ and consider the reference fibration $G\rightarrow B G\rightarrow BG\times BG$ induced by the diagonal inclusion $\Delta:G\rightarrow G\times G$.  There is a map $\phi_G:G\bq H\rightarrow BG$ so that the following is, up to homotopy, a pullback of fibrations.

\begin{diagram}
G & \rTo & G\bq H  & \rTo^{\phi_H} & BH \\
 &  & \dTo^{\phi_G} & & \dTo^{Bf} \\
G & \rTo & BG & \rTo^{B\Delta} & BG\times BG\\
\end{diagram}

\end{proposition}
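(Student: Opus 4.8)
The plan is to realise $G\bq H$ as a Borel construction and then recognise the asserted square as a pullback of classifying maps, using naturality of the Borel construction.

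First I would record two standard facts. Since the action of $H$ on $G$ defined by $f$ is free, the projection $EH\times G\to G$ is an $H$-equivariant map (diagonal action on the source) between free $H$-spaces with contractible fibre $EH$, hence a homotopy equivalence; passing to quotients gives a homotopy equivalence $EH\times_H G\xrightarrow{\ \simeq\ }G\bq H$. Moreover the principal bundle $H\to G\to G\bq H$ corresponds under this equivalence to the pullback of $EH\to BH$ along the projection $EH\times_H G\to EH\times_H(\mathrm{pt})=BH$, so that projection is (homotopic to) the classifying map $\phi_H$. Second, the standard action of $G\times G$ on $G$ by $(g_1,g_2)\cdot g=g_1gg_2^{-1}$ is transitive with isotropy $\Delta(G)$, so $G\cong(G\times G)/\Delta(G)$, and the map $h\ast g=f_1(h)gf_2(h)^{-1}$ is exactly this $(G\times G)$-action restricted along $f$. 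Consequently $E(G\times G)\times_{G\times G}G=E(G\times G)/\Delta(G)=B\Delta(G)\simeq BG$, and under this identification the Borel fibration $E(G\times G)\times_{G\times G}G\to B(G\times G)=BG\times BG$ is precisely the reference fibration $G\to BG\xrightarrow{B\Delta}BG\times BG$.

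The key step is a naturality lemma: for a homomorphism $\rho\colon K\to L$ and a left $L$-space $X$, viewed as a $K$-space via $\rho$, a $K$-equivariant map $E\rho\colon EK\to EL$ covering $B\rho$ induces a map
\[
EK\times_K X\ \longrightarrow\ BK\times_{BL}\bigl(EL\times_L X\bigr)
\]
which is a homotopy equivalence, because both sides are total spaces of fibrations over $BK$ with fibre $X$ — the right-hand side as the pullback of $EL\times_L X\to BL$ along $B\rho$ — and the displayed map is a fibrewise map over $BK$ that is the identity on $X$; comparing long exact homotopy sequences finishes it. Applying this with $K=H$, $L=G\times G$, $\rho=f$, $X=G=(G\times G)/\Delta(G)$, and inserting the two facts above, one gets $G\bq H\simeq EH\times_H G\simeq BH\times_{BG\times BG}BG$, i.e. the pullback of the reference fibration along $Bf$. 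Unwinding the equivalences identifies the projection to $BH$ with $\phi_H$ and defines $\phi_G$ as the composite $G\bq H\simeq EH\times_H G\to E(G\times G)\times_{G\times G}G=BG$ induced by $Ef$; the bottom square commutes by construction and the top square merely records that both vertical fibrations have fibre $G$ with the comparison map the identity there, so the whole diagram is, up to homotopy, a pullback of fibrations.

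The main obstacle is not conceptual but bookkeeping: one must fix conventions for left/right actions and for the chosen models of $EG$ and $BG$ consistently, so that the identification $G\cong(G\times G)/\Delta(G)$ is genuinely $H$-equivariant and the comparison map in the naturality lemma is honestly the identity on the fibre, while keeping in mind that only a \emph{homotopy} pullback is claimed, so strict commutativity is never required. Once the naturality lemma is in place everything else is formal; alternatively one may simply cite Eschenburg's construction in \cite{Es3}, of which this proposition is a restatement.
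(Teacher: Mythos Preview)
Your argument is correct and is the standard Borel-construction proof of this fact. The paper itself does not prove the proposition at all: it simply attributes the result to Eschenburg \cite{Es3} and states it without proof, so there is no ``paper's approach'' to compare against beyond that citation. Your closing remark that one may simply cite Eschenburg is exactly what the paper does.
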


We now fix a coefficient ring $R$ with the property that $H^\ast(G;R) \cong \Lambda_R(x_1, \ldots x_n)$ and $H^\ast(H;R)\cong \Lambda_R(y_1,...y_m)$ are exterior algebras, for example, $R = \mathbb{Q}$, or $R = \Z$ if $H^\ast(G;\Z)$ is torsion free.  Then, it is clear that  $H^\ast(BG;R)\cong R[\overline{x}_1,\ldots,\overline{x}_n]$ where the $\deg(\overline{x}_i) = \deg(x_i)+1$ and $dx_i = \overline{x}_i$ in the spectral sequence associated to the fibration $G\rightarrow EG\rightarrow BG$.   Using this notation, Eschenburg showed

\begin{proposition}\label{differentials}

In the Leray-Serre spectral sequence associated to the reference fibration $G\rightarrow B\Delta G\rightarrow BG\times BG$ in Proposition \ref{cohom}, each $x_i$ is totally transgressive and $dx_i = \overline{x}_i\otimes 1 - 1\otimes \overline{x}_i \in H^\ast(BG;R)\otimes H^\ast(BG;R)\cong H^\ast(BG\times BG;R).$

\end{proposition}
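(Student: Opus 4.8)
The plan is to work with the Borel fibration $G \to B\Delta G \to BG \times BG$ arising from the diagonal inclusion $\Delta : G \to G \times G$, and to compute the transgression of each exterior generator $x_i \in H^*(G;R)$ directly. The key observation is that this reference fibration is the pullback of the product of two copies of the universal fibration $G \to EG \to BG$ under the diagonal map $BG \to BG \times BG$; equivalently, the total space $B\Delta G = EG \times_G E(G\times G)$ fibers over $BG \times BG$ with fiber $(G\times G)/\Delta G \simeq G$. First I would set up the naturality of the transgression: in the universal fibration $G \to EG \to BG$, the class $x_i$ transgresses to $\overline{x}_i$ by definition of the generators $\overline{x}_i \in H^*(BG;R)$, and this is the content of the statement that $dx_i = \overline{x}_i$ in that spectral sequence. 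Since the total space is contractible, each $x_i$ is in fact totally transgressive there, and total transgressivity is inherited under pullback along any map of base spaces.

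Next I would analyze the two projections $p_1, p_2 : BG \times BG \to BG$ and the two inclusions $\iota_1, \iota_2 : G \to G \times G$ onto the two factors. The fibration $G\times G \to E(G\times G) \to BG \times BG$ is the external product of two copies of the universal fibration, so its Serre spectral sequence is the tensor product of the two, and on $H^*(G\times G;R) \cong H^*(G;R)\otimes H^*(G;R)$ the transgression sends $x_i \otimes 1 \mapsto \overline{x}_i \otimes 1$ and $1 \otimes x_i \mapsto 1 \otimes \overline{x}_i$. Now the reference fibration is obtained by restricting the base to the diagonal; under the inclusion $\Delta : G \hookrightarrow G\times G$ of fibers, the generator $x_i \in H^*(G;R)$ pulls back to... here is the crux: I need $\Delta^*(x_i\otimes 1) = x_i$ and $\Delta^*(1\otimes x_i) = x_i$ (each factor projection restricted to the diagonal is the identity on $G$), hence a generator of $H^*(G;R)$ that maps to $x_i$ under the fiber inclusion of the reference fibration is represented on the $(G\times G)$-fibration by the difference $x_i \otimes 1 - 1\otimes x_i$, which lies in the kernel of $\Delta^*$ except in the appropriate degree — more precisely, $x_i\otimes 1 - 1\otimes x_i$ transgresses, and one checks it maps under restriction to the diagonal to the class whose transgression we want. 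By naturality of the Serre spectral sequence applied to the map of fibrations covering $B\Delta : BG \to BG\times BG$, the transgression of the class corresponding to $x_i$ in the reference fibration is the image of $\overline{x}_i\otimes 1 - 1\otimes \overline{x}_i$, which is exactly the asserted formula.

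The step I expect to be the main obstacle is making the identification of fiber cohomology classes completely precise: one must verify that under the fiber map $G \to G\times G$ (the diagonal) and the induced map of spectral sequences, the element $x_i$ of $H^*(G;R)$ for the reference fibration genuinely corresponds to $x_i\otimes 1 - 1\otimes x_i$ rather than, say, $x_i\otimes 1$ or $\tfrac12(x_i\otimes 1 + 1\otimes x_i)$ — and that the homology transgression is well-defined on it, i.e. that $x_i\otimes 1 - 1\otimes x_i$ survives to the $E_{|x_i|+1}$ page of the $(G\times G)$-fibration so that its transgression is defined. Here total transgressivity of each $x_i$ (established in the first step) is what guarantees there is no interference from products of lower generators, so the only thing to pin down is the coefficient, and that follows because $\Delta^* \circ p_1^* = \Delta^* \circ p_2^* = \mathrm{id}$ forces the difference to be the primitive element restricting correctly. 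Once this bookkeeping is done, the formula $dx_i = \overline{x}_i \otimes 1 - 1 \otimes \overline{x}_i$ and the total transgressivity both drop out, and one invokes Proposition \ref{cohom} to transport the conclusion to the pullback square over $G\bq H$.
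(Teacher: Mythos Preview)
The paper does not give its own proof of this proposition; it is attributed to Eschenburg \cite{Es3}. So there is nothing to compare against directly, and I will simply assess your argument.

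Your overall strategy---compare the reference fibration to the universal bundle via naturality of the Serre spectral sequence---is correct and is how this is usually done. But you have the geometry of the comparison backwards, and this is the source of the ``main obstacle'' you identify at the end. The reference fibration $G\to BG\xrightarrow{B\Delta}BG\times BG$ is \emph{not} the pullback of the product fibration $G\times G\to EG\times EG\to BG\times BG$ along the diagonal map of bases: the two fibrations already have the \emph{same} base $BG\times BG$, and pulling back along $B\Delta$ would produce a fibration over $BG$ with fibre $G\times G$, which is neither of the objects in play. Likewise, your phrase ``restricting the base to the diagonal'' does not describe the relationship. What actually relates them is the quotient map on total spaces $EG\times EG\to EG\times_G EG\simeq BG$, covering the \emph{identity} on $BG\times BG$; on fibres this is
\[
q:G\times G\longrightarrow (G\times G)/\Delta G\cong G,\qquad (g_1,g_2)\longmapsto g_1g_2^{-1},
\]
not the diagonal inclusion $\Delta:G\hookrightarrow G\times G$. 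Consequently the induced map of spectral sequences goes \emph{from} the reference spectral sequence \emph{to} the universal $(G\times G)$-spectral sequence, and on the fibre edge it is $q^*:H^*(G;R)\to H^*(G\times G;R)$. Since the exterior generators $x_i$ are primitive and inversion acts as $-1$ on primitives, one has $q^*(x_i)=x_i\otimes 1 - 1\otimes x_i$ immediately from the Hopf algebra structure---no ambiguity about ``$x_i\otimes 1$ versus $x_i\otimes 1 - 1\otimes x_i$'' arises. Naturality then gives that the transgression of $x_i$ in the reference fibration maps to $\overline{x}_i\otimes 1 - 1\otimes\overline{x}_i$, and since the map on $E_2^{*,0}$ is the identity this is already the desired formula.

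Your assertion that ``total transgressivity is inherited under pullback along any map of base spaces'' is correct, but it does not apply here as stated, since the reference fibration is not a pullback of the universal $G$-bundle (it is not principal unless $G$ is abelian). One can instead argue as follows: pulling the reference fibration back along either axis inclusion $j_k:BG\hookrightarrow BG\times BG$ \emph{does} give the universal $G$-bundle up to homotopy, and combining these two pullbacks with the injectivity of $q^*$ on $E_2$ (it splits via $g\mapsto(g,e)$) lets one run an induction on degree to kill all earlier differentials on $x_i$. Alternatively, since $H^*(BG\times BG;R)$ is polynomial on even-degree classes and $H^*(G;R)$ is exterior on odd-degree classes, Borel's transgression theorem gives total transgressivity directly.
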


In particular in Proposition \ref{cohom} implies that, using naturality, we can compute the differentials in the fibration $G\rightarrow G\bq H\rightarrow BH$ if we can compute the map $Bf^\ast$ on cohomology.

The method for computing $Bf^\ast$ is due to Borel and Hirzebruch \cite{BH1}.  Fix a maximal torus $T_G\subseteq G$.  The Weyl group of $G$, $W_G$ acts on $T$, and therefore on $H^\ast(T;R)$, and thus, also on $H^\ast(BT;R)$.  We let $H^\ast(BT;R)^W$ denote the Weyl group invariant elements of $H^\ast(BT;R)$.  Then Borel and Hirzebruch \cite{BH1} show

\begin{theorem}\label{toruscomp}

Let $G$ be a compact Lie group with maximal torus $T_G$ and suppose $R$ is a ring with the property that $H^\ast(G;R)$ is an exterior algebra.  Then, the map $i^\ast:H^\ast(BG;R)\rightarrow H^\ast(BT_G;R)$ induced from the inclusion $i:T_G\rightarrow G$ is injective with image $H^\ast(BT_G;R)^W$.

\end{theorem}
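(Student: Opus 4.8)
The plan is to exploit the fibration $G/T \to BT \to BG$ induced by the inclusion $i\colon T\hookrightarrow G$, whose fiber $G/T$ is the flag manifold. The key classical input is that $H^\ast(G/T;R)$ is concentrated in even degrees, has total rank equal to the order $|W|$ of the Weyl group, and, crucially, has Euler characteristic $|W|$; under the stated hypothesis that $H^\ast(G;R)$ is an exterior algebra $\Lambda_R(x_1,\dots,x_n)$, the same rational-homotopy/transgression bookkeeping used elsewhere in this section shows $H^\ast(BG;R) \cong R[\overline{x}_1,\dots,\overline{x}_n]$ is a polynomial algebra on generators $\overline{x}_i$ of degree $|x_i|+1$ (all even), and likewise $H^\ast(BT;R)$ is polynomial on $n$ degree-two generators. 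So both base and total space of $BT\to BG$ have cohomology free as $R$-modules, concentrated in even degrees.

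First I would run the Leray–Serre spectral sequence of $G/T \to BT \to BG$ with $R$ coefficients. Since $\pi_1(BG)=0$ acts trivially and everything in sight is even-dimensional, there is no room for differentials — the spectral sequence collapses at $E_2$ — so $H^\ast(BT;R)$ is a free module over $H^\ast(BG;R)$ of rank $|W|=\chi(G/T)$, with a basis lifting any $R$-basis of $H^\ast(G/T;R)$. Collapse already forces the edge homomorphism $i^\ast\colon H^\ast(BG;R)\to H^\ast(BT;R)$ to be injective (it is the inclusion of the bottom row $E_\infty^{\ast,0}$, which survives because there are no incoming differentials), giving the first assertion.

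Next, for the image: the Weyl group $W$ acts on $T$, hence on $BT$ and on $H^\ast(BT;R)$, and $i^\ast$ lands in the invariants $H^\ast(BT;R)^W$ because $i\colon T\to G$ is $W$-equivariantly homotopic to its $W$-translates (conjugation by a Weyl representative is inner in $G$, so induces the identity on $BG$, hence fixes $\operatorname{im}(i^\ast)$). It remains to show surjectivity onto $H^\ast(BT;R)^W$. I would argue this rank-theoretically: since $H^\ast(BT;R)$ is free of rank $|W|$ over the subring $\operatorname{im}(i^\ast)\cong H^\ast(BG;R)$, and $W$ acts by $\operatorname{im}(i^\ast)$-algebra automorphisms, a Poincaré-series / averaging computation (using that $W$ acts freely on a generic orbit, so the field of fractions of $H^\ast(BT;\mathbb{Q})$ is a degree-$|W|$ Galois extension of that of $H^\ast(BG;\mathbb{Q})$ with group $W$) shows the invariant subring $H^\ast(BT;R)^W$ has the same Poincaré series as $H^\ast(BG;R)$; being squeezed between two rings with equal Poincaré series, $\operatorname{im}(i^\ast)=H^\ast(BT;R)^W$.

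The main obstacle is the surjectivity onto the invariants, and specifically making the rank/Poincaré-series comparison work over a general ring $R$ rather than just over $\mathbb{Q}$: one must know that passing to $W$-invariants is compatible with the freeness established by the spectral sequence, and that no torsion phenomena spoil the count — this is exactly where the hypothesis "$H^\ast(G;R)$ is an exterior algebra" (equivalently, $R$ avoids the torsion primes of $G$, or $R=\mathbb{Q}$) is indispensable, and I would cite \cite{BH1} for the detailed integrality argument rather than reproduce it.
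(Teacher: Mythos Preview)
The paper does not prove this theorem at all: it is stated as a result of Borel and Hirzebruch with a citation to \cite{BH1}, and no argument is given. So there is no ``paper's own proof'' to compare your proposal against.

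Your sketch is the standard approach (collapse of the Serre spectral sequence for $G/T\to BT\to BG$ under the hypothesis that $H^\ast(G;R)$ is exterior), and the injectivity half is clean. For the surjectivity onto $H^\ast(BT;R)^W$, your Poincar\'e-series squeeze is fine over $\mathbb{Q}$ but, as you yourself note, the passage to a general coefficient ring $R$ is exactly the delicate point; you end by deferring to \cite{BH1} for that, which is in effect what the paper does for the entire statement. So your proposal is a correct outline of the classical argument, but since the paper offers no proof there is nothing further to compare.
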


By choosing maximal tori $T_H$ and $T_{G\times G}$ for which $f(T_H)\subseteq T_{G\times G}$, this reduces the problem of computing $Bf^\ast:H^\ast(BG\times BG;R)\rightarrow H^\ast(BH;R)$ to the more tractable problem of computing $Bf^\ast :H^\ast(BT_{G\times G}; R)\rightarrow H^\ast(BT_H;R).$

We now describe the computation of $Bf^\ast:H^\ast(BT^n)\rightarrow H^\ast(BT^m)$ given $f:T^m\rightarrow T^n$.  Such an $f$ has the form $f(z_1,...,z_m) = (z_1^{A_{11}} z_2^{A_{12}} \cdots z_m^{A_{1m}}, ..., z_1^{A_{n1}}\cdots z_m^{A_{mn}})$ for some integers $A_{ij}$.  Collect the $A_{ij}$ into a matrix, $A = (A_{ij})$.  Let $x_1,..., x_m\in H^1(T^m)$ denote the canonical generators and similarly for $y_1,...,y_n\in H^1(T^n)$.  Then one easily sees that, with respect to the duals of the $x_i$ and $y_j$, that $f_\ast:H_1(T^m)\rightarrow H_1(T^n)$ is multiplication by the matrix $A$.  Dualizing, $A^t = f^\ast:H^1(T^n)\rightarrow H^1(T^m)$.

We claim that with respect to the elements $\overline{x}_i\in H^2(BT^m)$ and $\overline{y}_j\in H^2(BT^n)$, $Bf^\ast$ is also multiplication by $A^t$.  To see this, we use the following commutative diagram of fibrations.

\begin{diagram}
T^m & \rTo & ET^m & \rTo & BT^m\\
\dTo^{f} & & \dTo & & \dTo^{Bf}\\
T^n & \rTo & ET^n & \rTo & BT^n\\
\end{diagram}

Naturality of the Leray-Serre spectral sequence gives $Bf^\ast(\overline{y_j}) = Bf^\ast(dy_j) = d(f^\ast y_j) = d(A^t y_j)$.  But since $A^t y_j$ is a linear combination of the $y_i$, $d(A^t y_j) = A^t dy_j = A^t \overline{y_j}$.  That is, $Bf^\ast(\overline{y_j}) = A^t \overline{y_j}$.  Finally note that, since $H^\ast (BT^n)$ is generated by $H^2$, this completely determines $Bf^\ast$.

\

Recall that if $H = H_1\times H_2$ with $f:H_1\times H_2\rightarrow G^2$ mapping $H_1$ only into the first factor of $G^2$ and $H_2$ only mapping into the second, we denote the biquotient by $H_1\backslash G/H_2$.  When $H$ has the same rank of $G$, Singhof \cite{Si1} shows the cohomology ring has a particularly nice description.  Note the the map $H_i\rightarrow G$ induces a map $H^\ast(BG;R)\rightarrow H^\ast(BH_i)$ which gives $H^\ast(BH_i)$ the structure of an algebra over $H^\ast(BG;R)$.

\begin{theorem}(Singhof)\label{ringmaxrank}
Suppose $R$ is a ring for which $H^\ast(H_i;R)$ and $H^\ast(G;R)$ are exterior algebras.  For any biquotient of the form $H_1\backslash G/H_2$ with $\operatorname{rk} H_1 + \operatorname{rk} H_2 = \operatorname{rk} G$, the map $\phi_H:H_1\backslash G/ H_2\rightarrow BH_1\times BH_2$ induces an isomorphism $H^\ast(H_1\backslash G/H_2;R)\cong H^\ast(BH_1;R)\otimes_{H^\ast(BG;R)} H^\ast(BH_2;R).$

\end{theorem}

\

For computing Pontryagin classes of the tangent bundle to $G\bq H$, we have the following result due to Singhof \cite{Si1}.

\begin{theorem}\label{pclass} Suppose $f:H\rightarrow G^2$ defines a free biquotient action.  Then the total Pontryagin class of the tangent bundle to $G\bq H$ is given as $$p(G\bq H) = \phi_G^\ast \left[\prod_{\lambda \in \Delta^+G}\left(1+\lambda^2\right)\right] \phi_H^\ast\left[\prod_{\rho\in\Delta^+ H}\left(1+\rho^2\right)^{-1}\right]$$ where $\phi_G$ and $\phi_H$ are given in Proposition \ref{cohom}, and where $\Delta^+ G$ denotes the positive roots of $G$, interpreted as elements of $H^2(BT_G;R)$.

\end{theorem}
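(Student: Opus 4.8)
The plan is to realize $T(G\bq H)$ as the difference of two vector bundles that are visibly pullbacks along $\phi_G$ and $\phi_H$, and then to read off the Pontryagin classes of those two bundles by Borel--Hirzebruch (Theorem \ref{toruscomp}).

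First I would work on $G$, where $\pi\colon G\to G\bq H=:M$ is a principal $H$-bundle for the free action $h\ast g=f_1(h)gf_2(h)^{-1}$. Trivializing $TG\cong G\times\mathfrak g$ by left translation, one checks that the $H$-action on $G$ lifts to $h\cdot(g,v)=(h\ast g,\ \operatorname{Ad}_{f_2(h)}v)$ on $G\times\mathfrak g$, and that the vertical subbundle $V=\ker d\pi\subseteq TG$ becomes, in this trivialization, the subbundle whose fiber over $g$ is $\{\operatorname{Ad}_{g^{-1}}df_1(X)-df_2(X):X\in\mathfrak h\}$. Since the action is free, the map $X\mapsto \operatorname{Ad}_{g^{-1}}df_1(X)-df_2(X)$ is an $H$-equivariant isomorphism from $G\times\mathfrak h$ — with $H$ acting by $h\cdot(g,X)=(h\ast g,\operatorname{Ad}_hX)$ — onto $V$. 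Verifying this equivariance, i.e.\ tracking the adjoint twists carefully enough that both trivializations really are $H$-bundle trivializations, is the single computational point and the step I expect to be the main (though routine) obstacle.

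Granting this, $d\pi\colon TG\to\pi^\ast TM$ is $H$-equivariant and surjective with kernel $V$, so passing to $H$-quotients produces a short exact sequence of vector bundles over $M$,
\[
0\longrightarrow \mathfrak h_M\longrightarrow \mathfrak g_M\longrightarrow TM\longrightarrow 0,
\]
where $\mathfrak h_M:=G\times_H\mathfrak h$ (adjoint action) is the adjoint bundle of the principal $H$-bundle $\pi$, and $\mathfrak g_M:=G\times_H\mathfrak g$ with $H$ acting through $\operatorname{Ad}\circ f_2$. As $M$ is paracompact the sequence splits, so the Whitney sum formula gives $p(TM)=p(\mathfrak g_M)\,p(\mathfrak h_M)^{-1}$, the inverse existing because $p(\mathfrak h_M)$ has leading term $1$.

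It remains to identify the two factors. The adjoint bundle $\mathfrak h_M$ is classified by $\phi_H$, so $\mathfrak h_M=\phi_H^\ast(EH\times_H\mathfrak h)$; restricting $EH\times_H\mathfrak h$ to $BT_H$ and splitting $\mathfrak h$ as a $T_H$-representation into $\mathfrak t_H$ plus the root $2$-planes indexed by $\Delta^+H$ — each root $2$-plane having Pontryagin class $1+\rho^2$ — Theorem \ref{toruscomp} identifies $p(EH\times_H\mathfrak h)$ with the Weyl-invariant class $\prod_{\rho\in\Delta^+H}(1+\rho^2)$. For $\mathfrak g_M$, the representation $\operatorname{Ad}\circ f_2$ factors through $f_2$, so $\mathfrak g_M=(Bf_2\circ\phi_H)^\ast(EG\times_G\mathfrak g)$; but the homotopy-commutative square of Proposition \ref{cohom} gives $B\Delta\circ\phi_G\simeq Bf\circ\phi_H$, whose two components read $\phi_G\simeq Bf_1\circ\phi_H$ and $\phi_G\simeq Bf_2\circ\phi_H$, hence $\mathfrak g_M=\phi_G^\ast(EG\times_G\mathfrak g)$ and, as before, $p(EG\times_G\mathfrak g)=\prod_{\lambda\in\Delta^+G}(1+\lambda^2)$. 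Substituting into $p(TM)=p(\mathfrak g_M)\,p(\mathfrak h_M)^{-1}$ yields the asserted identity.
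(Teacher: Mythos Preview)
The paper does not prove this theorem; it is stated as a result of Singhof \cite{Si1} and simply cited. So there is no ``paper's own proof'' to compare against.

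Your argument is essentially Singhof's: trivialize $TG$ by left translations to identify it $H$-equivariantly with $G\times(\mathfrak g,\operatorname{Ad}\circ f_2)$, identify the vertical bundle with the adjoint bundle $G\times_H\mathfrak h$, pass to quotients to obtain $TM\oplus\mathfrak h_M\cong\mathfrak g_M$, and then recognize the two summands as pullbacks along $\phi_H$ and $\phi_G$ of the universal adjoint bundles, whose Pontryagin classes are computed via the root-space decomposition and Theorem~\ref{toruscomp}. The equivariance check you flag as the main obstacle is indeed routine; your outline of it is correct, and the use of Proposition~\ref{cohom} to convert $Bf_2\circ\phi_H$ into $\phi_G$ by projecting the square onto the second $BG$ factor is exactly right.

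One small caveat worth stating explicitly: the Whitney sum formula for Pontryagin classes holds only modulo $2$-torsion over $\Z$, so your identity $p(TM)=p(\mathfrak g_M)\,p(\mathfrak h_M)^{-1}$ is literally valid over a coefficient ring $R$ in which $2$ is invertible (or when the relevant cohomology has no $2$-torsion). The paper is silent on this point as well, and in the applications it makes no difference, but it is the only place your argument is not quite on the nose.
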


The map $\phi_H^\ast$ is computed as the edge homomorphism in the Leray-Serre spectral sequence for the homotopy fibration $G\rightarrow G\bq H\rightarrow BH$ while $\phi_G^\ast$ is computed by noting $\phi_G^\ast \circ B\Delta ^\ast = \phi_H^\ast\circ Bf^\ast$ and using the fact that $B\Delta^\ast$ is surjective.  In fact, as $B\Delta^\ast (\overline{x}\otimes 1) = B\Delta^\ast(1\otimes \overline{x}) = \overline{x}$ for any $\overline{x}\in H^\ast(BG)$, it follows that $\phi_G^\ast$ may be computed as either $\phi_H^\ast \circ Bf_1^\ast$ or $\phi_H^\ast\circ Bf_2^\ast$.  We interpret the roots of $G$ as follows: for $T= T_G\subseteq G$, there is a natural isomorphism between $H^1(T;R)$ and $\operatorname{Hom}(\pi_1(T),R)$.  Further, if $\exp:\mathfrak{t}\rightarrow T$ denotes the exponential map, we can identify $\pi_1(T)$ with $\exp^{-1}(0)$.  As shown in \cite{FH}(Theorem 23.16), the weight lattice of a simply connected Lie group is dual to $\exp^{-1}(0)$.  Hence, we may interpret weights, and in particular, roots of a representation as elements of $H^1(T;R)$.  By using transgressions of generators of $H^1(T;R)$ as generators of $H^2(BT;R)$, we can interpret any weight as an element of $H^2(BT;R)$.

Since we are interested in classifying biquotients of dimension at most $7$, the only possible non-trivial Pontryagin class is $p_1$.  In the notation of Theorem \ref{pclass}, it is easy to see \begin{equation}\label{firstpont} p_1(G\bq H) = \phi_H^\ast\left(\sum_{\lambda \in \Delta^+ G} Bf_i^\ast \lambda^2 - \sum_{\rho\in \Delta^+ H} \rho^2\right)\end{equation} for $i=1$ or $2$.

\

Singhof also proved a similar theorem for Stiefel-Whitney classes.  Here, instead of using the maximal torus and roots, one uses the notion of maximal $2$-groups and $2$-roots of a Lie group.  A $2$-group of a compact Lie group $G$ is any subgroup isomorphic to $(\Z_2)^n$ for some $n$.  We caution that, while the rank of a Lie group is an invariant of its Lie algebra, the $2$-rank of a Lie group, that is, the dimension as a $\Z_2$-vector space of a maximal $2$-group, depends on the group itself.  The $2$-roots of $G$ are defined analogously to the roots:  the adjoint representation of $G$, when restricted to a maximal torus $T_G$, breaks into root spaces.  Likewise when the adjoint representation of $G$ is restricted to a maximal $2$-group $Q_G$ it breaks into $2$-root spaces.  We can view each $2$-root as a map $Q_G\rightarrow \mathbb{Z}_2$ which induces, via the fibration $Q_G\rightarrow EQ_G\rightarrow BQ_G$, a map $H_1(BQ_G; \Z_2)\rightarrow \Z_2$, that is, as an element of $H^1(BQ_G; \Z_2)$.  More generally, given a basis for $Q_G$ as a $\Z_2$-vector space, the dual basis can be canonically identified as generators of $H^1(BQ_G; \Z_2)$.  Using this identification, Singhof has shown \cite{Si1}

\begin{theorem}(Singhof)\label{swclass}

Suppose $f:H\rightarrow G^2$ defines a free biquotient action, then the total Stiefel-Whitney class of the tangent bundle of $G\bq H$ is given as $$w(G\bq H) = \phi_G^\ast\left(\prod_{\lambda \in \Delta^2 G}(1+\lambda)\right)\phi_H^\ast\left(\prod_{\rho\in\Delta^2 H}(1+\rho)\right)^{-1}$$ where $\Delta^2 G$ denotes the $2$-roots of $G$ and $\phi_G^\ast$ and $\phi_H^\ast$ are the maps induced on cohomology with $\Z_2$ coefficients.

\end{theorem}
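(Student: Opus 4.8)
The plan is to run the proof of Theorem~\ref{pclass} almost verbatim, replacing the maximal torus $T_G$ by a maximal $2$-group $Q_G$, the roots by $2$-roots, and $\Q$-coefficients by $\Z_2$-coefficients, and then to confront the one step where the analogy really needs extra input.

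First I would isolate the bundle identity underlying both results. Write $\mathrm{ad}_G\to BG$ for the real vector bundle associated to the universal principal $G$-bundle via the adjoint representation on $\mathfrak g$, and similarly $\mathrm{ad}_H\to BH$. The basic identity underlying Theorem~\ref{pclass} is that, through the maps $\phi_G$ and $\phi_H$ of Proposition~\ref{cohom},
\[
T(G\bq H)\ \oplus\ \phi_H^\ast(\mathrm{ad}_H)\ \cong\ \phi_G^\ast(\mathrm{ad}_G)
\]
as real vector bundles over $G\bq H$: the tangent bundle of $G$ is trivialized by left translation, the vertical bundle of the principal $H$-bundle $G\to G\bq H$ is trivialized by the fundamental vector fields of the (free) action, and comparing the two through the inclusion $\mathfrak h\hookrightarrow\mathfrak g$ — which varies over $G$ via the adjoint action — identifies $T(G\bq H)$ stably with $\phi_G^\ast(\mathrm{ad}_G)$ minus $\phi_H^\ast(\mathrm{ad}_H)$. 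Applying the total Pontryagin class to this identity gives precisely Theorem~\ref{pclass}, since the restriction of $\mathrm{ad}_G$ to $T_G$ is the sum of a trivial bundle and the $2$-dimensional root spaces, each of total Pontryagin class $1+\lambda^2$. Applying instead the total Stiefel-Whitney class and using its multiplicativity gives
\[
w(G\bq H)=\phi_G^\ast\big(w(\mathrm{ad}_G)\big)\cdot\phi_H^\ast\big(w(\mathrm{ad}_H)\big)^{-1},
\]
so everything reduces to identifying the universal classes $w(\mathrm{ad}_G)\in H^\ast(BG;\Z_2)$ and $w(\mathrm{ad}_H)\in H^\ast(BH;\Z_2)$.

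To do this I would invoke Singhof's $\Z_2$-analogue of Theorem~\ref{toruscomp}: the Stiefel-Whitney class of a $G$-vector bundle is computed from the restriction of the bundle along $BQ_G\to BG$ for a suitable maximal $2$-group $Q_G\subseteq G$. Restricting the adjoint representation to $Q_G$ decomposes $\mathfrak g$ as the $Q_G$-fixed subspace (a trivial summand) together with the $2$-root spaces; on the $2$-root space attached to a $2$-root $\lambda\colon Q_G\to\Z_2$ the group acts through $\pm1$, so that summand is a sum of real line bundles of total class $1+\lambda$, with $\lambda$ read in $H^1(BQ_G;\Z_2)$ as in the paragraph preceding the statement. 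Hence $w(\mathrm{ad}_G)$ restricts on $BQ_G$ to $\prod_{\lambda\in\Delta^2G}(1+\lambda)$, and $w(\mathrm{ad}_H)$ restricts on $BQ_H$ to $\prod_{\rho\in\Delta^2H}(1+\rho)$. Substituting into the formula for $w(G\bq H)$ — with $\phi_H^\ast$ read off the edge homomorphism of the biquotient spectral sequence and $\phi_G^\ast$ recovered from $\phi_G^\ast\circ B\Delta^\ast=\phi_H^\ast\circ Bf^\ast$ and the surjectivity of $B\Delta^\ast$ on $\Z_2$-cohomology, exactly as in the Pontryagin case — yields the asserted identity.

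The main obstacle, and the only genuine difference from Theorem~\ref{pclass}, is the detection step. Over $\Q$ one has the clean statement of Theorem~\ref{toruscomp}, namely $H^\ast(BG;\Q)\hookrightarrow H^\ast(BT_G;\Q)$ with image the Weyl invariants; over $\Z_2$, however, $H^\ast(BG;\Z_2)\to H^\ast(BQ_G;\Z_2)$ need not be injective, and even the choice of $Q_G$ matters, since maximal $2$-groups in $G$ need not all be conjugate. One must therefore prove that the Stiefel-Whitney classes \emph{in particular} are detected on a suitable $Q_G$ — the substance of Singhof's argument, which passes through the relationship between the $\Z_2$-cohomology of $BG$ and that of the classifying spaces of the elementary abelian $2$-subgroups of $G$ — and then verify that the computation on $BQ_G$ genuinely determines $\phi_G^\ast w(\mathrm{ad}_G)$ inside $H^\ast(G\bq H;\Z_2)$. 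The remaining bookkeeping mirrors the torus case: one chooses compatible maximal $2$-groups $Q_H$ and $Q_{G\times G}$ with $f(Q_H)\subseteq Q_{G\times G}$ so that $Bf^\ast$ can be computed on the $\Z_2$-cohomology of the classifying spaces of the $2$-groups (the analogue of the choice of tori made after Theorem~\ref{toruscomp}), and, as in the remark following Malcev's theorem, one stays alert to the behavior of any $SO(2n)$-factors; neither point causes real trouble.
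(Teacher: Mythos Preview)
The paper does not prove this theorem: it is stated with attribution to Singhof and cited as \cite{Si1}, with no argument given in the paper itself. There is therefore nothing in the paper to compare your proposal against.

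That said, your outline is the right shape for how Singhof's argument goes: start from the stable splitting $T(G\bq H)\oplus\phi_H^\ast(\mathrm{ad}_H)\cong\phi_G^\ast(\mathrm{ad}_G)$, apply the total Stiefel--Whitney class, and then compute $w(\mathrm{ad}_G)$ by restriction to a maximal $2$-group, where the adjoint representation decomposes into the $2$-root lines. You have also correctly identified the one genuine subtlety absent from the Pontryagin case, namely that $H^\ast(BG;\Z_2)\to H^\ast(BQ_G;\Z_2)$ is not in general injective and maximal $2$-groups need not be conjugate, so one must argue separately that Stiefel--Whitney classes are detected on a suitable $Q_G$. That detection step is precisely the content one must import from Singhof's paper; your sketch is honest about not supplying it.
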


The $2$-roots of the classical groups and $\mathbf{G}_2$ are recorded in \cite{BH1}.

\subsection{Torus actions on spheres}\label{linearsphere}

We will eventually see that, in many applications, a biquotient of dimension at most $7$ is diffeomorphic to the quotient of a product of odd dimensional spheres by an effectively free linear torus action.  It will often be the case that the particular biquotient action under study is merely effectively free.  In particular, the techniques of Section \ref{top} do not directly apply.  However, we now show the following proposition.

\begin{proposition}\label{torussimp}
Suppose $M\cong \left(S^{2k_1-1}\times \cdots \times S^{2k_m - 1}\right)/T^n$ is diffeomorphic to the quotient of a product of odd dimensional spheres by an effectively free torus action.  Then $M$ is diffeomorphic to biquotient $G\bq H$ where $G = U(k_1)\times \cdots \times U(k_m)$, $H = T^n\times U(k_1-1)\cdots U(k_m-1)$, and $H$ acts on $G$ freely.
\end{proposition}

In particular, Eschenburg's and Singhof's methods can be applied.

\begin{proof}(Proof of Proposition \ref{torussimp})

To begin with, we note that any linear $T^n$ action on $S^{2k-1} = \{(a_1, a_2,..., a_k)\in \mathbb{C}^k: \sum |a_i|^2 = 1\}$ is equivalent to one of the form \begin{equation}\label{lintor} (z_1,..., z_n) \ast (a_1,...,a_k) = (z^{J_1} a_1,..., z^{J_k} a_k). \end{equation}  Here, for $J = (l_1,...,l_n)\in \mathbb{Z}^n$, the notation $z^J$ is shorthand for $z_1^{l_1} \cdots z_n^{l_n}$.

Such an action comes from a biquotient action in the following way.  Consider $H = T^n\times U(k-1)$ with homomorphism $f=(f_1,f_2):H\rightarrow G^2 = U(k)^2$ given by $f_1(z_1,..., z_n, B)  = \diag(z^{J_1},..., z^{J_k})$ and $f_2(z_1,...,z_n, B) = \diag(B,1)$.

Then quotienting out by the $\{e\}\times U(k-1)$ subaction, the torus then acts on $S^{2k-1} = U(k)/U(k-1)$ as in \eqref{lintor}.  Thus, if the $T^n$ action on $S^{2k-1}$ is effectively free, so is the action of $H$ on $G$.

Of course, this same process can be extended to give any linear $T^n$ action on $S^{2k_1-1}\times \cdots\times S^{2k_m-1}$ by setting $H = T^n \times U(k_1-1)\times\cdots \times U(k_m-1)=  T^n\times H'$ and $G = U(k_1)\times \cdots \times U(k_m)$.  Then one sets $$f_1(z_1,...,z_n, B_1,..., B_m) = (\diag(z^{J_{1,1}} ,..., z^{J_{n,1}}), ..., \diag(z^{J_{1,m}},..., z^{J_{n,m}}))$$ and $$f_2(z_1,...,z_n, B_1,..., B_m) = (\diag(B_1, 1), ..., \diag(B_m,1)).$$  Just as before, if the original $T^n$ action on $S^{2k_1-1}\times \cdots \times S^{2k_m-1}$ is effecively free, so is the $H$ action on $G$.

 Because $f_2$ is clearly injective when restricted to $\{e\}\times H'$, we see the kernel of $f$ is a subgroup of $T^n\times \{e\}$.  Because the quotient of a torus is a torus, by dividing out the kernel of $f$, we see that $T^n\times H'/\ker f$ has the form $T^{n'}\times H'$,  for some $n'\leq n$.  So we may assume $f$ is injective.  Because $f_2(H)\cap Z(G) = \{I\}$, this immediately implies, via Proposition \ref{freetest}, that such an action is free iff it is effectively free.

\end{proof}

We now apply Eschenburg's and Singhof's techniques.  Since $T^n$ and $U(k)$ have torsion free cohomology rings, we may choose our coefficient ring to be $R = \mathbb{Z}$.

We now apply Eschenburg's and Singhof's methods in the case where $m=2$.  The case of general $m$ is analogous, but becomes notationally complicated.

Let $T_G = T_{U_{k_1}}\times T_{U_{k_2}}\subseteq G$ denote the standard maximal torus, and similarly for $T_H$.  Let $x_1,..., x_{k_1}, y_1,..., y_{k_2}\in H^1(T_G)$ be the duals of the standard generators of $H_1(T_G)$.  We will use $x$ with no subscript to denote the tuple $(x_1,..., x_{k_1})$, and similarly with all other variables.  As in Section \ref{top}, we may identify $H^2(BT_G)$ with $\mathbb{Z}[\overline{x}, \overline{y}]$ where $dx_i = \overline{x}_i$ in the Serre spectral sequence associated to the fibration $T_G\rightarrow ET_G\rightarrow BT_G$, and similarly for the $y_i$.

The Weyl Group $W_G$ of $G$ is isomorphic to $S_{k_1}\times S_{k_2}$, a product of symmetric groups, and acts by arbitrary permutations of the $\overline{x}_i$ and of the $\overline{y}_j$ separately.   By Theorem \ref{toruscomp}, this implies that we may identify $H^\ast(BG)$ with $H^\ast(BT_G)^{W_G}$, that is, with $$\mathbb{Z}[\sigma_1(\overline{x}), ...., \sigma_{k_1}(\overline{x}), \sigma_1(\overline{y}),..., \sigma_{k_2}(\overline{y})]$$ where $\sigma_i$ denotes the $i$th elementary symmetry polynomial.

In an analogous fashion, we let $u_i\in H^1(T^n)$ denote the duals of the canonical generators of $H_1(T^n)$ and likewise for $s_1,..., s_{k_1 - 1}, t_1, ...., t_{k_2-1}\in H^1(H')$.  Then we identify $H^2(BH)$ with $$\mathbb{Z}[\overline{u}, \sigma_1(\overline{s}),..., \sigma_{k_1-1}(\overline{s}), \sigma_1(\overline{t}),..., \sigma_{k_2-1}(\overline{t})].$$

Then, under the map $f=(f_1,f_2):T^n\times H'\rightarrow G$, one easily sees that, if $A_i$ denotes the matrix of the map $(\pi_i\circ f_1)_\ast: H_1(T^n)\rightarrow H_1(T_{G_i})$ that \begin{center}

\begin{tabular}{llllll}  $Bf_1^\ast(\overline{x})$ & $= \overline{u}A_1^t$ & &   $Bf_1^\ast(\overline{y})$ & $= \overline{u}A_2^t $ & \\ $Bf_2^\ast(\overline{x}_i)$ & $= \overline{s}_i$ & & $Bf_2^\ast(\overline{y}_j)$ & $= \overline{t}_j$ & for $i\neq k_1, j\neq k_2$ \\ $Bf_2^\ast(\overline{x}_{k_1})$ &$= 0$ & & $Bf_2^\ast(\overline{y}_{k_2})$ & $= 0$ & \end{tabular}
\end{center}

It follows that $Bf_2^\ast(\sigma_i(\overline{x})) = \sigma_i(\overline{s})$ for $i\leq k_1-1$ and $Bf_2^\ast (\sigma_{k_1}(\overline{x})) = 0$, and similarly for $Bf_2^\ast(\sigma_i(\overline{y}))$.  In addition, $Bf_1^\ast(\sigma_i(\overline{x})) = \sigma_i(\overline{u} A^t_1)$ and $Bf_1^\ast(\sigma_i(\overline{y})) = \sigma_i(\overline{u} A^t_1)$.

We let $v_i, w_j\in H^\ast(G)$ with $dv_i = \sigma_i(\overline{x})$, $dw_j = \sigma_j(\overline{y})$ in the Leray - Serre spectral sequence associated to the universal fibration $G\rightarrow EG\rightarrow BG$.  So, by Proposition \ref{differentials}, in the spectral sequences associated to $G\rightarrow BG\rightarrow BG\times BG$, we have $dv_i = \sigma_i(\overline{x})\otimes 1 - 1\otimes \sigma_i(\overline{x})$ and similarly for $dw_j$.  Then, in the Serre spectral sequence associated to the homotopy fibration $G\rightarrow G\bq H\rightarrow BH$, we have $dv_i = Bf^\ast(\sigma_i(\overline{x})\otimes 1 - 1\otimes \sigma_i(\overline{x})) = \sigma_i(\overline{u} A^t_1 ) - \sigma_i(\overline{s})$ when $i < k_1$ and $dv_{k_1} = \sigma_{k_1}(\overline{u} A^t_1)$.  Thus, modulo extension problems, we can compute the cohomology groups of $G\bq H$.

In the case where $\operatorname{rk} H = \operatorname{rk} G$, that is, when $m = n$, Theorem \ref{ringmaxrank} shows $\phi_H^\ast$ induces an isomorphism $H^\ast(G\bq H)\cong H^\ast(BT^n)\otimes_{H^\ast(BG)}H^\ast(H')$.  It follows from the description of $dv_i$ and $dw_j$ above that $H^\ast(G\bq H) \cong \mathbb{Z}[\overline{u}]/I$ where $I$ the ideal generated by $\sigma_{k_1}( \overline{u} A_1^t)$ and $\sigma_{k_2}(\overline{u} A_2^t)$.  In general, that is, when $m = n$ but $m$ is arbitrary, an analogous argument gives the following proposition.

\begin{proposition}\label{typcalc}

Suppose $T^n$ acts effectively freely on $S^{2k_1 -1}\times...\times S^{2k_n -1}$ with $A_i$ denoting the matrix describing the inclusion $H_1(T^n)\rightarrow H_1(U(k_i))$.  Then the cohomology ring of the quotient space is isomorphic to $\mathbb{Z}[\overline{u}_1,..., \overline{u}_n]/I$ where $I$ is the ideal generated by $\sigma_{k_i}(\overline{u} A_i^t )$ for $i = 1$ to $n$.

\end{proposition}

We will also be interested in computing the cohomology ring with coefficients in the ring $R = \mathbb{Z}_2$.  To that end, we let $Q_G$ denote a maximal subgroup of $G$ which is isomorphic to a direct sum of copies of $\mathbb{Z}_2$.  According to \cite{BH1}, $Q_G$ is conjugate to a subgroup of the maximal torus $T_G\subseteq G$.  It follows that the map $H^\ast(BG;\mathbb{Z}_2)\rightarrow H^\ast(BQ_G;\mathbb{Z}_2)$ factors as $H^\ast(BG;\mathbb{Z}_2)\rightarrow H^\ast(BT_G;\mathbb{Z}_2)\rightarrow H^\ast(TQ_G; \mathbb{Z}_2)$.  Since $H^\ast(BG)$ is torsion free, the first map in this composition is simply the mod $2$ reduction of the map on the integral level, which we have already computed.  The second map is easily seen to be an isomorphism on the even degree cohomology groups.  We may thus compute $Bf^\ast:H^\ast(BG\times BG;\mathbb{Z}_2)\rightarrow H^\ast(BH;\mathbb{Z}_2)$ as the restriction of the map $Bf^\ast: H^\ast(BQ_G\times BQ_G;\mathbb{Z}_2)\rightarrow H^\ast(BQ_H;\mathbb{Z}_2)$ induced from the inclusion $f:Q_H\rightarrow Q_G\times Q_G$, just as we did to prove Proposition \ref{typcalc}.

\

We now describe the computation of the characteristic classes of $G\bq H$.  For computing the Pontryagin classes, we note the positive roots of $U(k_1)$, interpreted as elements of $H^2(BT_{U(k_1)}) \cong \mathbb{Z}[\overline{x}]$, are the elements of the form $\overline{x}_i - \overline{x}_j$ with $i < j$.  Since all factors of $G$ and $H'$ are isomorphic to $U(k)$ for some $k$, an analogous statements applies to them.  Then, we use equation \eqref{firstpont} to compute $p_1$.

For the Stiefel-Whitney classes, we have the following proposition.

\begin{proposition}\label{easysw} Suppose $T^n$ acts on $S^{2k_1-1}\times\cdots \times  S^{2k_m-1}$ effectively freely.  Then the total Stiefel-Whitney class of the orbit space is  $$w = \phi_G^\ast\left(\prod_{\lambda \in \Delta^+ G}(1+\hat{\lambda})\right) \phi_H^\ast\left( \prod_{\mu\in \Delta^+ H}(1 + \hat{\mu})\right)^{-1}$$ where $\hat{\lambda} \in H^2(BG;\mathbb{Z}_2)$ is the mod $2$ reduction of $\lambda \in H^2(BG; \mathbb{Z})$.

\end{proposition}

\begin{proof}

By using Singhof's formula for the Stiefel-Whitney classes, given in Theorem \ref{swclass}, we see it is enough to show that for each $U(l)$ factor of either $G$ or $H$, that $\prod_{\tau \in \Delta^2 U(l)}(1+\tau) = \prod_{\rho\in \Delta^+ U(l)} (1 + \hat{\rho})$.

To that end, let $\alpha_1,..., \alpha_l$ denote the canonical generators of $H^0(Q_{U(l)};R)$.  We set $d\alpha_i = \overline{\alpha}_i \in H^1(BQ_{U(l)};R)$, where the differential $d$ comes from the Leray-Serre spectral sequence associated to the universal bundle $Q_{U(l)}\rightarrow EQ_{U(l)}\rightarrow BQ_{U(l)}$.  We also recall the elements $\overline{x}_1,\ldots, \overline{x}_l\in H^2(T_{U(l)}; \mathbb{Z})$.

Then, according to \cite{BH1}, the $2$-roots of $U(l)$, interpreted as elements of $H^1(BQ_{U(l)})$, are of the form $\overline{\alpha}_i + \overline{\alpha}_j$, with $i < j$, each with multiplicity $2$.  It follows that, in the formula given in Theorem \ref{swclass}, that $U(l)$ contributes factors of the form $(1 + \overline{\alpha}_i + \overline{\alpha}_j)^2$ and, because we are working mod $2$, this is equal to $ 1 + \overline{\alpha}_i^2 - \overline{\alpha}_j^2$

To finish off the proof, it is enough to note that the reduction $\widehat{\overline{x}}_i$ of $\overline{x}_i$ mod $2$ is $\alpha_i^2$.  This follows by considering the following commutative diagram:

\begin{diagram}
BQ_{U(l)}  & \rTo & BT_{U(l)}\\
\dTo & & \dTo\\
B\mathbb{Z}_2 & \rTo & BS^1 \\
\end{diagram}

The horizontal maps are induced from the natural inclusions, while the vertical maps are induced from the projections to the $i$th factor.  Applying $H^2$ with coefficient ring $R = \mathbb{Z}_2$ to this diagram, the vertical maps become inclusions with image $\alpha_i^2$ and $\widehat{\overline{x}}_i$ respectively.  The result now follows.

\end{proof}

\section{\texorpdfstring{Listing the pairs $(G,H)$}{Listing the pairs (G,H)}}
\label{pairs}

The goal of this section is to prove the following theorem.

\begin{theorem}\label{gplist}  Table \ref{table:gplist} contains all pairs $(G,H)$ of compact Lie groups (with $H$ given only up to finite cover) giving rise to a reduced compact simply connected biquotient $G\bq H$ of dimension $6$ or $7$ whose cohomology ring is not singly generated.  For each such pair, the table characterizes the rational homotopy groups of $G\bq H$ via a prototypical example.

\begin{table}[h!]

\caption{Groups giving rise to a reduced $6$- or $7$-dimensional biquotient}\label{table:gplist}

\begin{center}

\begin{tabular}{|l|l|l|l|}

\hline

$G$ & $H$ & Example & Section\\

\hline

$SU(2)^2$ & $1$ & $S^3\times S^3$ & Corollary \ref{S3class}\\

\hline

$SU(4)\times SU(2) $ & $SU(3)\times SU(2)\times S^1 $ & $S^2\times S^4$ &Section \ref{s2s4ands3s4}\\
$Sp(2)\times SU(2) $ & $SU(2)^2\times S^1 $ & $S^2\times S^4$ & Section \ref{s2s4ands3s4}\\
$Spin(7)\times SU(2)$ & $\mathbf{G}_2\times SU(2)\times S^1$ & $S^2\times S^4$ & Section \ref{s2s4ands3s4}\\
$Spin(8)\times SU(2)$ & $Spin(7)\times SU(2)\times S^1$ & $S^2\times S^4$ & Section \ref{s2s4ands3s4}\\

\hline

$SU(3)$ & $T^2$ & $S^2\times \C P^2$ & \cite{Es2}\\
$SU(3)\times SU(2)$ & $SU(2)\times T^2$ & $S^2\times \C P^2$ & Section \ref{analysiss2cp2}\\
$SU(4)\times SU(2)$ & $Sp(2)\times T^2$ & $S^2\times \C P^2$ & Section \ref{analysiss2cp2}\\

\hline

$SU(2)^3$ & $T^3$ & $S^2\times S^2\times S^2$ & Section \ref{analysiss23}\\

\hline
\hline

$SU(4)\times SU(2)$ & $SU(3)\times SU(2)$ & $S^3\times S^4$ & Section \ref{analysiss3s4} \\
$Sp(2)\times SU(2)$ & $SU(2)^2$ & $S^3\times S^4$ & Section \ref{analysiss3s4}\\
$Spin(7)\times SU(2)$ & $\mathbf{G}_2\times SU(2)$ & $S^3\times S^4$ & Section \ref{analysiss3s4}\\
$Spin(8)\times SU(2)$ & $Spin(7)\times SU(2)$ & $S^3\times S^4$ & Section \ref{analysiss3s4}\\

\hline

$SU(3)$ & $S^1$ & $S^3\times \C P^2$ & \cite{Es1}\\
$SU(3)\times SU(2)$ & $SU(2)\times S^1$ & $S^3\times \C P^2$ & Section \ref{analysiss3cp2}\\
$SU(4)\times SU(2)$ & $Sp(2)\times S^1$ & $S^3\times \C P^2$ & Section \ref{analysiss3cp2}\\

\hline

$SU(2)^3$ & $T^2$ & $S^2\times S^2\times S^3$ & Section \ref{analysiss2s3s3}\\

\hline

\end{tabular}

\end{center}

\end{table}

\end{theorem}

One easily sees that each pair gives rise to a biquotient.  In Section \ref{analysis}, we list all possible effectively free actions.

To prove this theorem, we use a theorem of Totaro's which relates the topology of a reduced biquotient $G\bq H$ to the topology of $G$.  To properly state Totaro's theorem, we need a preliminary definition.  Recall we are assuming $G$ is simply connected, and hence is isomorphic to a product of simple $G_i$.  In particular, $\pi_k(G)_\Q\cong \bigoplus \pi_k(G_i)_\Q$. 

\begin{definition*}  For a biquotient $G\bq H$, let $G_i$ be a simple factor of $G$ and consider the fibration $H\rightarrow G\rightarrow G\bq H$.  We say $G_i$ contributes degree $k$ to $G\bq H$ if the composition $\pi_{2k-1}(H)_\Q\rightarrow \pi_{2k-1}(G)_\Q\rightarrow \pi_{2k-1}(G_i)_\Q$ is not surjective, where the second map is induced from the projection $G\rightarrow G_i$.

\end{definition*}

In particular, since $\pi_{2k}(G)_\Q = 0$ for every Lie group, it follows that if $G_i$ contributes degree $k$ to $G\bq H$, then $\pi_{2k-1}(G\bq H)_\Q\neq 0$.

In order for this homomorphism to fail to be surjective, $\pi_{2k-1}(G_i)_\Q$ must be non-zero.  For each simple group $G$ the values of $k$ for which $\pi_{2k-1}(G)_\Q\neq 0$ are known and tabulated in Table \ref{table:degree}.

\begin{table}[h]

\caption{Degrees of simple Lie groups}\label{table:degree}

\begin{center}

\begin{tabular}{|l|l|}

\hline

$G$ & degrees\\

\hline

$SU(n)$ & 2,3,4,...,n\\

$Spin(2n+1)$ & 2,4,6,...,2n\\

$Sp(n)$ & 2,4,6,...,2n\\

$Spin(2n)$ & 2,4,6,...,2n-2, n\\

$\mathbf{G}_2$ & 2,6\\

$\mathbf{F}_4$ & 2,6,8,12\\

$\mathbf{E}_6$ & 2,5,6,8,9,12\\

$\mathbf{E}_7$ & 2,6,8,10,12,14,18\\

$\mathbf{E}_8$ & 2,8,12,14,18,20,24,30 \\

\hline \end{tabular}

\end{center}

\end{table}

With this, we can now state Totaro's theorem \cite{To1}.

\begin{theorem}\label{Tclass}(Totaro) Suppose $G\bq H$ is a reduced biquotient and $G_i$ is a simple factor of $G$.  Then one of the following occurs.

(1) $G_i$ contributes its maximal degree.

(2) $G_i$ contributes its second highest degree and there is a simple factor $H_i$ of $H$ such that $H_i$ acts on one side of $G_i$ and $G_i/H_i$ is isomorphic to either $SU(2n)/Sp(n)$ for $n\geq 2$, or $Spin(7)/\mathbf{G}_2 = S^7$, $Spin(8)/\mathbf{G}_2 = S^7\times S^7$, or $\mathbf{E}_6/\mathbf{F}_4$.  In each of the four cases, the second highest degree is $2n-1,$ $4$, $4$, or $9$ respectively.

(3) $G_i = Spin(2n)$ with $n\geq 4$, contributing degree $n$ and there is a simple factor $H_i$ of $H$ such that $H_i = Spin(2n-1)$ acts on $G_i$ on one side in the standard way with $G_i/H_i = S^{2n-1}$.

(4) $G_i = SU(2n+1)$ and there is a simple factor $H_i$ of $H$ such that $H_i = SU(2n+1)$ acts on $G_i$ via $h(g) = h \, g \, h^t$.  In this case, $G_i$ contributes degrees $2$, $4$, $6$, ..., $2n$.

\end{theorem}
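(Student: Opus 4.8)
The plan is to run everything through the rational homotopy of the fibration $H\to G\to G\bq H$, reduce the statement to a question about pairs of homomorphisms into a simple Lie group, and then appeal to the classification of factorizations of compact Lie groups. The first step is to identify the map occurring in the definition of ``$G_i$ contributes degree $k$''. The fibre inclusion of $H\to G\to G\bq H$ is the orbit map $\iota\colon H\to G$, $\iota(h)=f_1(h)f_2(h)^{-1}$, which factors as
$$H\xrightarrow{(f_1,f_2)}G\times G\xrightarrow{\,\mathrm{id}\times\nu\,}G\times G\xrightarrow{\ \mu\ }G ,$$
with $\mu$ multiplication and $\nu$ inversion; since on homotopy groups $\mu_*$ is addition (Eckmann--Hilton) and $\nu_*=-\,\mathrm{id}$, we get $\iota_*=(f_1)_*-(f_2)_*$. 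Because $\pi_{\mathrm{even}}(\,\cdot\,)_\Q=0$ for a compact Lie group, the long exact sequence splits into
$$0\to\pi_{2k}(G\bq H)_\Q\to\pi_{2k-1}(H)_\Q\xrightarrow{\ (f_1)_*-(f_2)_*\ }\pi_{2k-1}(G)_\Q\to\pi_{2k-1}(G\bq H)_\Q\to0 ,$$
so, writing $f^i_j$ for the composite $H\xrightarrow{f_j}G\to G_i$, the factor $G_i$ contributes degree $k$ exactly when $(f^i_1)_*-(f^i_2)_*\colon\pi_{2k-1}(H)_\Q\to\pi_{2k-1}(G_i)_\Q$ fails to be onto. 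I will use that each $\pi_{2k-1}(G_i)_\Q$ is at most one-dimensional, with the single exception $\pi_7(SO(8))_\Q\cong\Q^2$, and that the top one, $\pi_{2d_i-1}(G_i)_\Q$ with $d_i$ the largest degree of $G_i$, is always $\cong\Q$.

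Next I would split on whether $G_i$ contributes $d_i$. If it does, we are in case (1). If not, $(f^i_1)_*-(f^i_2)_*$ is onto $\pi_{2d_i-1}(G_i)_\Q\cong\Q$, so, decomposing $\pi_{2d_i-1}(H)_\Q$ over the simple factors of $H$ (torus factors contribute nothing in degree $\ge 3$), a \emph{single} simple factor $H_j$ satisfies $(f^i_1|_{H_j})_*\neq(f^i_2|_{H_j})_*$ in degree $2d_i-1$; relabelling the two sides if necessary, $(f^i_1|_{H_j})_*\neq 0$ there. Then $K:=f^i_1(H_j)$ is a connected subgroup of $G_i$ whose inclusion is onto $\pi_{2d_i-1}(\,\cdot\,)_\Q$, and $H_j$ is a finite cover of $K$. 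The dichotomy is whether $K=G_i$. If $K=G_i$, then after passing to a finite cover of $H_j$ (harmless, since we allow finite covers of $H$) I may identify $H_j=G_i$ with $f^i_1|_{H_j}=\mathrm{id}$, so $\phi:=f^i_2|_{H_j}$ is an endomorphism of $G_i$; a nontrivial endomorphism of a compact simple group has finite kernel, hence full-dimensional image, hence is surjective, so $\phi$ is either trivial or an automorphism. If $\phi$ is trivial the $H_j$-orbit of $e$ in $G_i$ is all of $G_i$, contradicting reducedness. So $\phi$ is a nontrivial automorphism; as inner automorphisms act trivially on $\pi_*$, I may take $\phi$ to be a nontrivial diagram automorphism, and $(f^i_1)_*-(f^i_2)_*=\mathrm{id}-\phi_*$ on $\pi_*(G_i)_\Q$ up to the other factors. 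Running through the diagram automorphisms, the only one that does not fix the top-degree rational homotopy class is complex conjugation on an odd special unitary group, which acts on $\pi_{2k-1}$ by $(-1)^k$; so $G_i=SU(2n+1)$, the orbit map on $H_j$ is $g\mapsto g\,\bar g^{-1}=g\,g^t$ (whose image, the symmetric special unitary matrices, is a proper subset, consistent with reducedness), and $\mathrm{id}-\phi_*=1-(-1)^k$ vanishes for even $k$: after also ruling out interference from other factors, this is case (4).

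The remaining possibility is $K\subsetneq G_i$, a proper connected subgroup whose inclusion is onto the top rational homotopy group. Here I would invoke Onishchik's classification of factorizations of compact simple Lie groups together with the classification of transitive actions on spheres to conclude that $(G_i,K)$ is one of $(SU(2n),Sp(n))$, $(SO(m),SO(m-1))$, $(SO(7),\mathbf{G}_2)$, $(SO(8),\mathbf{G}_2)$, $(\mathbf{E}_6,\mathbf{F}_4)$, or one of a short further list involving spin covers, handled the same way. For each pair one reads off from Table \ref{table:degree}, degree by degree, which rational homotopy classes of $G_i$ lie in the image of $K$: all of them except either the second-highest degree of $G_i$ — exactly when $G_i/K$ is $SU(2n)/Sp(n)$, $SO(7)/\mathbf{G}_2=S^7$, $SO(8)/\mathbf{G}_2=S^7\times S^7$, or $\mathbf{E}_6/\mathbf{F}_4$ — or, when $G_i=SO(2n)$ with $n\ge 4$ and $K=SO(2n-1)$, the ``Euler class'' degree $n$, with $G_i/K=S^{2n-1}$. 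I would then verify that no second simple factor of $H$ restores the missing class (so $G_i$ contributes exactly that one degree), and that $H_i:=H_j$ acts on $G_i$ through a single side as $K\cong H_i$; the latter holds because a nontrivial $f^i_2|_{H_j}$ would, by a dimension count, force a transitive and hence not effectively free action of $H_j\times H_j$ on $G_i$. This lands every such $G_i$ in case (2) or (3).

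The main obstacle is the coordination in the last two steps: showing the asserted single degree is both contributed by $G_i$ and the only one it contributes. For it to be contributed, I must exclude a second simple factor of $H$ whose image in $G_i$ hits the relevant non-top rational homotopy class — a factor doing so from the side opposite $K$ would, together with $K$, give a genuine factorization of $G_i$ and destroy effective freeness by the criterion of Section~2, while one doing so from the same side would push $\dim(G\bq H)$ below zero; and the constraints of Theorem~\ref{Qrestriction}, applied to $G\bq H$ and to auxiliary $S^1$-bundles over it, bound how many simple factors of $H$ can interact with $G_i$ at all. For it to be the only degree contributed, I need the degree-by-degree bookkeeping above, which is complicated by low-rank coincidences that must be handled individually: chiefly that $\pi_7(SO(8))_\Q$ is two-dimensional and permuted by triality (this is exactly why $SO(8)/\mathbf{G}_2=S^7\times S^7$ appears), and that the spin groups $Spin(7)\subset Spin(8)$, $Spin(7)\subset Spin(9)$ and their relatives must be distinguished from their images in the orthogonal groups before any degree count is meaningful. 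Once these are in place, the reduced hypothesis forces precisely one of (1)--(4).
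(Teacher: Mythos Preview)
The paper does not prove this theorem; it is quoted from Totaro \cite{To1} and used as input to the classification. So there is no in-paper proof to compare against, and what follows is an assessment of your argument on its own terms.

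Your overall architecture is the right one and is essentially Totaro's: compute $\iota_*=(f_1)_*-(f_2)_*$ on rational homotopy, find a simple factor $H_j$ whose image hits the top degree of $G_i$, and split on whether that image is all of $G_i$ (outer-automorphism analysis leading to case (4)) or a proper subgroup (Onishchik's list, leading to cases (2)--(3)). The identification of the relevant outer automorphism as complex conjugation on $SU(2n+1)$, and the enumeration of pairs $(G_i,K)$ with $K$ proper and hitting the top-degree class, are both correct.

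The gaps are exactly where you flag them, and your proposed fixes do not work as written. For one-sidedness in cases (2)--(3) you argue that a nontrivial $f^i_2|_{H_j}$ would make an ``$H_j\times H_j$'' action transitive by a dimension count. But the biquotient action is by a single diagonal copy of $H_j$, and even when the two images $K,K'$ satisfy $K\cdot K'=G_i$, the $H_j$-orbit $\{f^i_1(h)f^i_2(h)^{-1}\}$ need not be all of $G_i$; moreover for the pairs in question (e.g.\ $Sp(n)\subset SU(2n)$) one does \emph{not} have $K\cdot K=G_i$. The correct argument is different: for each listed pair, the embedding $H_j\hookrightarrow G_i$ is essentially unique up to conjugation (and, for $SU(2n)$, complex conjugation, which acts trivially on the top degree $2n$), so a nontrivial $f^i_2|_{H_j}$ would satisfy $(f^i_2|_{H_j})_*=(f^i_1|_{H_j})_*$ on $\pi_{2d_i-1}$ and the difference would vanish there, contradicting your choice of $H_j$. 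For the ``no second factor restores the missing class'' step, neither of your mechanisms is valid in general: a second simple factor $H_k$ can map nontrivially into other $G$-factors as well, so there is no immediate negative-dimension contradiction, and opposite-side images need not factorize $G_i$ nor violate the conjugacy criterion for effective freeness. The actual argument, at least in case (4), is a centralizer computation --- the image of $h\mapsto(h,\bar h)$ has central centralizer in $G_i\times G_i$, so any commuting simple factor acts trivially on $G_i$ --- and similar but case-dependent maximality/centralizer arguments are needed for (2) and (3). These are the substantive steps in Totaro's proof, and your sketch does not yet supply them.
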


Note that case (4), which we emphasize is not conjugation, cannot actually occur for a biquotient of dimension at most $7$.  Denoting $G = G_0\times SU(2n+1)$ and $H = H_0 \times SU(2n+1)$, then the projection of the $H$ action on the $SU(2n+1)$ factor is not free.  In fact, $(g,A)\in H_0\times SO(2n+1)\subseteq H_0\times SU(2n+1)$ fixes $I\in SU(2n+1)$.  It follows that the projection of the $H_0\times SO(2n+1)$ action onto $G_0$ must be free.  Since $\dim(G\bq H) = \dim(G_0\bq H_0) \leq 7$, it follows that $\dim(G_0\bq (H_0\times SO(2n+1)))\leq 4$, with equality only in the case of $n=1$.  Finally, the classification of low dimensional biquotients in \cite{DeV1,KZ,Es2} implies that there are no reduced biquotients of dimension at most $4$ where $H$ contains a factor isomorphic to $SO(3)$.  

We point out that the highest non-zero rational homotopy group in Table \ref{table:Qclass} for a product of symmetric spaces is $\pi_{7}$, corresponding to degree $4$.  Hence, Theorem \ref{Tclass} immediately implies that no simple factor of $G$ is an exceptional Lie group.  That is, if $G_i$ is exceptional, the highest degree is greater than $4$, and if $G_i$ does not contribute its highest degree, $G_i = E_6$, occuring in case (2), so contributes degree $9 > 4$.

In the proof of Theorem \ref{gplist}, we will make repeated use of the following proposition.

\begin{proposition}\label{simpfact}Suppose $M = G\bq H$ is a reduced biquotient.  Write $G = G_1\times \ldots \times G_m$, $H = H_1\times \ldots \times H_n\times T^k$ with each $G_i$ and $H_i$ simple.  Then

(1)  Each $G_i$ contributes either its highest degree or second highest degree.  In particular, $m \leq \dim \pi_{\text{odd}} M_\Q$

(2)  $k = \dim \pi_2(M)_\Q$

(3)  $m - n = \dim\pi_3(M)_\Q - \dim\pi_4(M)_\Q$

\end{proposition}

\begin{proof}
We first prove (1)  If $G_i$ does not contribute its highest degree, $G_i$ must fall into class (2), (3), or (4) of Theorem \ref{Tclass}.  We have already ruled out case (4).  Since the highest degree in Table \ref{table:Qclass} is $4$, cases (2) and (3) are very constrained: $(G_1,H_1)$ must be one of $(SU(4),Sp(2))$, $(Spin(7),\mathbf{G}_2)$, $(Spin(8),\mathbf{G}_2)$, or $(Spin(8), Spin(7))$.  In each of these cases, $G_i$ contributes its second highest degree.

To prove (2) and (3), we consider the long exact sequence in rational homotopy groups associated to the fibration $H\rightarrow G\rightarrow G\bq H$.  Recalling that the even rational homotopy groups of a Lie group vanish and that $G$ is simply connected by assumption, we see $\pi_2(M)_Q\cong \pi_1(H)_\Q$ which clearly has dimension $k$.

In addition, a portion of the long exact sequence is $0 \rightarrow \pi_4(M)_\Q\rightarrow \pi_3(H)_\Q\rightarrow \pi_3(G)_\Q\rightarrow \pi_3(M)_\Q\rightarrow 0$.  Since $\dim\pi_3(G)_\Q = m$, and similarly for $H$, (3) follows.

\end{proof}

The next proposition will allows us to reduce the proof of Theorem \ref{gplist} to the case where $M$ is $7$-dimensional, or $M$ has the same rational homotopy groups as $S^3\times S^3$.

\begin{proposition}\label{reduce7to6}

Suppose $H = H'\times S^1$.  Then for any reduced biquotient $G\bq H$, we have $\pi_k(G\bq H) \cong \pi_k(G\bq H')$ for $k\geq 3$, and $\pi_2(G\bq H) \cong \pi_2(G\bq H')\oplus \mathbb{Z}$.

\end{proposition}

\begin{proof}

By first quotienting by the $H'$ action and then the $S^1$ action, we obtain a principal $S^1$-bundle $S^1\rightarrow G\bq H'\rightarrow G\bq H$.  Then, since $\pi_k(S^1) = 0$ for $k\geq 2$ and since $\pi_1(G/H') = 0$, the long exact sequence of homotopy groups associated to this bundle breaks into short exact sequences $0\rightarrow \pi_k(G\bq H')\rightarrow \pi_k(G\bq H)\rightarrow 0$ for $k\geq 3$ and $0\rightarrow \pi_2(G\bq H')\rightarrow \pi_2(G\bq H)\rightarrow \mathbb{Z}\rightarrow 0$.

\end{proof}

To use Proposition \ref{reduce7to6}, we observe that for each $6$-dimensional entry in Table \ref{table:Qclass} with the exception of $S^3\times S^3$, there is a corresponding $7$-dimensional entry for which all rational homotopy groups of degree $3$ or higher agree.  Then Proposition \ref{simpfact} (2) implies that every pair $(G,H)$ with rational homotopy groups isomorphic to those of $S^2\times S^4,$ $S^2\times \mathbb{C}P^2$, or $(S^2)^3$ is of the form $(G,H'\times S^1)$ where $G\bq H'$, by Proposition \ref{reduce7to6} has the same rational homotopy groups as either $S^3\times S^4$, $S^3\times \mathbb{C}P^2$, or $(S^2)^2\times S^3$, respectively.

Recalling the low dimensional isomorphisms $SU(2)\cong Sp(1)\cong Spin(3)$, $Sp(2)\cong Spin(5)$, and $SU(4)\cong Spin(6)$, we now prove Theorem \ref{gplist}.

\begin{proof}(Proof of Theorem \ref{gplist})

By Proposition \ref{reduce7to6}, we need only to handle there case where $M$ has the same rational homotopy groups as $S^3\times S^3$, or $M$ is $7$-dimensional.  We now break into four cases, depending on the rational homotopy groups of $M$.  Recall that $H$ is considered only up to finite connected covers, so we may always assume $H$ is given as a product of a semi-simple group and a torus.

\textbf{Case 1:} $M$ has the same rational homotopy groups of a product of $S^3$s and $S^2$s.

Suppose $M\cong G\bq H$ is a reduced biquotient having the same rational homotopy groups as $(S^2)^m\times (S^3)^n$, that is, $\dim\pi_2(M)_\Q = m$, $\dim\pi_3(M)_\Q = m+n$, and all other rational homotopy groups vanish.  Let $G_i$ be a simple factor of $G$.  By Theorem \ref{Tclass}, $G_i$ contributes at least one degree.  Since the only nontrivial degree is $2$, $G_i$ contributes degree $2$.  In case (2) and (3) of Theorem \ref{Tclass}, degree $2$ does not arise.  Hence, $G_i$ must come from case (1) of Theorem \ref{Tclass}.  In particular, the highest degree of $G_i$ is $2$, so, using Table \ref{table:degree}, we see $G_i$ is isomorphic to $SU(2)$.  Since $2$ is the only degree of $SU(2)$, it follows that there are $n+m$ simple factors of $G$, all isomorphic to $SU(2)$.

By Proposition \ref{simpfact}, the number of circle factors of $H$ is $\dim(\pi_2(M)_\Q) = m$ while the number of simple factors of $H$ is $0$.  Hence, $H\cong T^m$.  Thus, if $M= G\bq H$ has the same rational homotopy groups as $(S^2)^m\times (S^3)^n$, then $(G,H) = ((SU(2))^{m+n}, T^m)$.  This proves Theorem \ref{gplist} in the first case and last case in dimension $6$, as well as the last case in dimension $7$.

\

\textbf{Case 2:} $M$ has the same rational homotopy groups as $S^3\times S^4$.

Suppose $M\cong G\bq H$ is a reduced biquotient having the same rational homotopy groups as $S^3\times S^4$.  Using Proposition \ref{simpfact}, we see that since $\dim(\pi_{odd}(M)_\Q) = 2$, $G$ has at most two factors and since $\dim\pi_3(M)_\Q = \dim\pi_4(M)_\Q$, $H$ has the same number of simple factors as $G$.  Finally, since $\dim \pi_2(M)_\Q = 0$, $H$ has no circle factors.

We now break into cases depending on the number of simple factors of $G$.

\textbf{Case 2a} $G$ is simple.

Suppose $G$ is simple, and therefore, that $H$ is also simple.  Since $G$ must contribute degree $4$, this is either the highest degree of $G$, or $G$ falls into case (2) or (3) of Theorem \ref{Tclass}.  In case (2) or (3), we find that $G/H$ is either $S^7$ or $S^7\times S^7$, neither of which have the correct rational homotopy groups.  So, $G$ falls into case (1) of Theorem \ref{Tclass}, implying that $4$ is the highest degree of $G$.  Thus, $G = SU(4) = Spin(6)$ or $G = Sp(2) = Spin(5)$.  If $G = SU(4)$, then $\dim H = \dim G - 7 = 8$.  Since $H$ is simple, $H = SU(3)$, giving the pair $(SU(4), SU(3))$.  If $G = Sp(2)$, then $\dim H = \dim G - 7 = 3$, so $H = SU(2) = Sp(1)$, giving the pair $(Sp(2), Sp(1))$.  However, in \cite{KZ}, Kapovitch and Ziller show that for the pairs $(SU(4), SU(3))$ and $(Sp(2), Sp(1))$, all biquotients are rationally $S^7$, a contradiction.  Hence, $G$ can not be simple.

\

\textbf{Case 2b} $G = G_1\times G_2$ has two simple factors.

Suppose $G = G_1\times G_2$ and $H = H_1\times H_2$ are both products of two simple factors.  Then one factor of $G$, say $G_1$, must contribute degree $4$ to $M$ while the other factor, $G_2$ must contribute degree $2$.  Since degree $2$ does not appear in case (2) or (3) of Theorem \ref{Tclass}, $G_2$ must contribute its highest degree of $2$, so $G_2= SU(2)$.  Then $\dim H_1 + \dim H_2 = \dim G_1 + \dim G_2 - 7 = \dim G_1 - 4$.

The degree $4$ which $G_1$ contributes is either the highest degree of $G_1$, so $G_1 = SU(4)$ or $Sp(2)$, or $G_1$ comes from cases (2) and (3) of Theorem \ref{Tclass}.  In the first case, from the classification of simple Lie groups, it follows that $H$ is, up to cover, isomorphic to either $SU(3)\times SU(2)$ or $SU(2)\times SU(2)$, respectively, giving rise to two of the entries of Table \ref{table:gplist}.

So, we may assume that $G_1$ contributes a degree as in case (2) or (3) of Theorem \ref{Tclass}, so $(G_1,H_1) = (Spin(7), \mathbf{G}_2), (Spin(8), \mathbf{G}_2),$ or $(Spin(8), Spin(7))$.  Since $\dim H_2 = \dim G_1  - 4 - \dim H_1,$ we find $\dim H_2 = 3$, $10$, or $3$ respectively.  Since $H_2$ is simple, one easily sees that this implies that $H$ is isomorphic to $SU(2)$, $Sp(2)$, or $SU(2)$ respectively.

However, the case $(G,H) = (Spin(8)\times SU(2), \mathbf{G}_2\times Sp(2))$ can not give rise to a biquotient.  To see this, note that there are no nontrivial homomorphisms from $H$ into $SU(2)$, so the projection of the $H$ action to the $SU(2)$ factor of $G$ is trivial.  It follows that the projection of the $H$ action to the $Spin(8)$ factor of $G$ must be effectively free.  Then $Spin(8)\bq H$ is a reduced $4$-dimensional biquotient.  But these have already been classified \cite{D1,KZ,Es2} and, in particular, there is no reduced biquotient of the form $Spin(8)\bq H$.  This completes case 2b, and hence also case 2.

\

\textbf{Case 3} $M$ has the same rational homotopy groups as $S^3\times \mathbb{C}P^2$.

Suppose $M\cong G\bq H$ is a reduced biquotient having the same rational homotopy groups as $S^3\times \mathbb{C}P^2$.  By Proposition \ref{simpfact}, $G$ contains at most two factors, $H$ has one fewer simple factor, and $H$ contains an $S^1$ factor.

We now break into cases depending on the number of simple factors of $G$.

\textbf{Case 3a}: $G$ is simple.

If $G$ is simple, then $H$ has no simple factors, so $H\cong S^1$.  Then $\dim G = 7 + \dim S^1 = 8$.  From the classification of simple Lie groups, it follows that $G \cong SU(3)$.

\textbf{Case 3b}: $G = G_1\times G_2$ has two simple factors.

Suppose $G = G_1\times G_2$ and $H = H'\times S^1$.  We assume $G_1$ contributes degree $3$ while $G_2$ contributes degree $2$.  As in the previous case, this implies $G_2 = SU(2)$.

Since $G_1$ contributes degree $3$, only cases (1) and (2) of Theorem \ref{Tclass} can occur, and, if case (2) occurs, then $G_1 = SU(4)$ with $H' = Sp(2)$, giving rise to the pair $(SU(4)\times SU(2), Sp(2)\times S^1)$.

So, we assume $G_1$ contributes its highest degree of $3$, which implies $G\cong SU(3)$.  Then $\dim H'= 3$, so $H'$ is isomorphic to $SU(2)$.  This gives the entry $(G,H) = (SU(3)\times SU(2), SU(2)\times S^1)$, completing Table \ref{table:gplist}.

This completes the proof of case 3, and hence, of Theorem \ref{gplist}.

\end{proof}

In case 1 of the proof above, we see that if $M = G\bq H$ is a reduced biquotient having the same rational homotopy groups as $(S^3)^n$, then $(G,H) = (SU(2)^n, \{e\})$.  Thus, we have the following corollary.

\begin{corollary}\label{S3class}

If $M$ is a compact simply connected biquotient with $ n = \dim\pi_3(M)_\mathbb{Q}$ and if all other rational homotopy groups vanish, then $M$ is diffeomorphic to $(S^3)^n$.

\end{corollary}

\section{\texorpdfstring{Analyzing some pairs $(G,H)$}{Analyzing some pairs (G,H)}}
\label{analysis}

In this section, we will select several pairs $(G,H)$ from Table \ref{table:gplist} and classify all the effectively free biquotient actions of $H$ on $G$ and then, when possible, determine the diffeomorphism type of the quotient.  Corollary \ref{S3class} and the preceding discussion handle this for the first case in Table \ref{table:gplist}, when $M$ has rational homotopy groups isomorphic to those of $S^3\times S^3$.

We also note that in many examples, $G$ contains a factor isomorphic to $SU(2)$.  Thus, the following proposition will be used repeatedly.

\begin{proposition}\label{su2biquotients}

Suppose $G = G_1\times SU(2)$ and $H = H'\times T^n$ is product of a semisimple and simply connected compact group $H'$ with a torus.  Suppose $f:H\rightarrow G\times G$ defines an effectively free reduced biquotient action and assume the projection of the $H$ action to the $SU(2)$ factor of $G$ is nontrivial.  Then, exactly one of the following occurs.

(1)  The torus factor $T^n$ acts non-trivially on the $SU(2)$ factor of $G$ while $H'$ acts trivially.

(2)  $T^n$ acts trivially on the $SU(2)$ factor of $G$ and at most one simple factor of $H'$, isomorphic to $SU(2)$, acts by conjugation with all other simple factors of $H'$ acting trivially.  Further, the projection of the $H$ action to the first factor of $G$ must be effectively free.

\end{proposition}

\begin{proof}

Of course, if the projection of the $H'$ action to the $SU(2)$ factor of $G$ is trivial, $T^n$ must act non-trivially, so we may assume $H'$ acts non-trivially.  In particular there is a simple factor $H_1$ of $H'$ which acts non-trivially.  Suppose this action is defined by a homomorphism $f = (f_1,f_2):H_1\rightarrow SU(2)^2$.

From the classification of simple Lie groups, it follows that every simply connected simple Lie group which is not isomorphic to $SU(2)$ has dimension greater than three.  In particular, if $H_1$ is not isomorphic to $SU(2$),  both $f_i$ homomorphisms have positive dimensional kernel.  Since $H_1$ is simple, this implies $f$ is trivial, contradicting the fact that $H_1$ acts non-trivially on $SU(2)$.  Thus, $H_1\cong SU(2)$.

Now, up to conjugation, there are precisely two homomorphisms $SU(2)\rightarrow SU(2)$, the trivial homomorphism and the identity.  It follows that the only $SU(2)$ biquotient actions on itself are, up to equivalence, trivial, left multiplication, and conjugation.  However, since the action is reduced, left multiplication cannot occur.  Hence, the non-trivial actio of $H_1 = SU(2)$ on $SU(2)$ is conjugation.  In particular, $f$ is the diagonal embedding $SU(2)\rightarrow SU(2)^2$ given by $A\mapsto (A,A)$.  The image is a maximal connected subgroup of $SU(2)\times SU(2)$.  It follows that $f$, when restricted to any other simple factor of $H'$ or to $T^n$, must be trivial.

Finally, The projection of the $H$ action to the $SU(2)$ factor of $G$ fixes the identity, and hence, the projection of the $H$ action to $G_1$ must be effectively free.

\end{proof}

\subsection{\texorpdfstring{Biquotients with $\pi_\ast(G\bq H)_\Q\cong \pi_\ast(S^2\times S^4)_\Q$}{Rational homotopy groups like S2 x S4 }}
\label{s2s4ands3s4}

In this section, we classify many biquotients having the same rational homotopy groups as $S^2\times S^4$.

Let $(G,H) = (G_1\times SU(2), H_1\times SU(2)\times S^1)$ denote one of the four entries in Table \ref{table:gplist} having the same rational homotopy groups as $S^2\times S^4$.  That is,$$ (G_1,H_1)\in \{(SU(4), SU(3)), (Sp(2), SU(2)), (Spin(7), \mathbf{G}_2), (Spin(8), Spin(7)).$$

By Proposition \ref{reduce7to6}, we see that the classification of effectively free biquotients with rational homotopy groups isomorphic to $S^3\times S^4$ will aid in the classification of those with rational homotopy groups matching those of $S^2\times S^4$, so we begin with this case.

\begin{proposition}\label{S3S4class}  Suppose $f:H_1\times SU(2)\rightarrow (G_1\times SU(2))^2$ gives rise to a reduced effectively free biquotient action.  Then, up to equivalence, the $H_1$ factor acts on only one side of $G_1$ with $G_1/H_1 = S^7$ or $G_1/H_1 = Sp(2)/\Delta Sp(1) \cong T^1 S^4$.  Further, $H_2= SU(2)$ acts on $G_1/H_1$ freely with quotient $S^4$ and acts on $SU(2)$ either trivially or by conjugation.

\end{proposition}

When $G_1/H_1 = S^7$, the $SU(2)$ action is the Hopf action.  In the other case, when $G_1/H_1 = Sp(2)/\Delta Sp(1)$, the $SU(2) = Sp(1)$ action is by left multiplication by $\diag(p,1) \subseteq Sp(2)$ with $p\in Sp(1)$.  We note that the double cover $Sp(2)\rightarrow SO(5)$ induces a diffeomorphism $Sp(2)/\Delta Sp(1)\cong SO(5)/SO(3)\cong T^1 S^4$.

We now prove Proposition \ref{S3S4class}.

\begin{proof}  (Proof of Proposition \ref{S3S4class})

By Proposition \ref{su2biquotients}, we may assume without loss of generality that the projection of the $H_1$ action onto the $SU(2)$ factor of $G$ is trivial and that the action by $H_2= SU(2)$ is either trivial or by conjugation.  In either case, the projection of the full $H$ action on the $SU(2)$ factor fixes the identity, so the projection of the $H$ action on the $G_1$ factor must be effectively free.

Then $G_1\bq H$ is a $4$-dimensional biquotient.  These were classified in \cite{KZ}:  $H_1$ acts only one one side of $G_1$ and either $G_1/H_1 = S^7$, which $SU(2)$ then acts on via the Hopf action, or $G_1/H_1 = Sp(2)/ \Delta Sp(1)$ and $Sp(1)$ acts on $G_1/H_1$ by left multiplication by $\diag(p,1)$.

\end{proof}

Using Proposition \ref{S3S4class}, we may easily classify all actions giving rise to biquotients having the rational homotopy groups of $S^2\times S^4$.

\begin{proposition}\label{S2S4class}  Suppose $f:H = H_1\times SU(2)\times S^1\rightarrow \left(G_1\times SU(2)\right)^2$ gives rise to a reduced effectively free biquotient action.  Then the projection of the $H_1\times SU(2)$ subaction onto $G_1$ is effectively free with quotient $S^4$ while the projection to the $SU(2)$ factor of $G$ is trivial.  The circle factor of $H$ acts linearly on $S^4$ and, up to ineffective kernel, as the Hopf action on the $SU(2)$ factor of $G$.

\end{proposition}

In particular, such biquotients are always decomposable, being the total space of a linear $S^4$ bundle over $S^2$.  But linear $S^m$ bundles over $S^2$ are classified by $[S^2, BO(m+1)]\cong \pi_1(O(m+1))\cong \mathbb{Z}_2$ if $m\geq 2$.  Thus, there are precisely two linear $S^m$ bundles over $S^2$ for any $m\geq 2$.  Using Poincar\'e duality and the Gysin sequence associated to $S^m\rightarrow E\rightarrow S^2$, one easily sees that for $m\geq 3$, the cohomology ring of $E$ is isomorphic to that of a product.  However, as shown in \cite{Ge} (Lemma 8.2.5), the second Stiefel-Whitney class, a homotopy invariant \cite{Wu}, distinguishes them.  That is, the total spaces of the two linear $S^m$-bundles over $S^2$ are not even homotopy equivalent.

We now prove Proposition \ref{S2S4class}.

\begin{proof}(Proof of Proposition \ref{S2S4class})

By Proposition \ref{reduce7to6}, the restriction of the $H = H_1\times SU(2)\times S^1$ action to $H_1\times SU(2)\times \{e\}$ defines a biquotient with rational homotopy group isomorphic to those of $S^3\times S^4$, so Proposition \ref{S3S4class} classifies these actions.  But, if the $SU(2)$ factor of $H$ acts by conjugation on the $SU(2)$ factor of $G$, then, by Proposition \ref{su2biquotients}, the circle factor of $H$ must acts trivially on $SU(2)$.  It follows that in this case, the projection of the $H_1\times SU(2)\times S^1$ action to the $G_1$ factor of $G$ must be effectively free.  But the rank of $H_1\times SU(2)\times S^1$ is bigger than that of $G_1$ in every case, so this can not occur.  It follows that the $SU(2)$ factor of $H$ must act trivially on the $SU(2)$ factor of $G$.  Thus, again using Proposition \ref{S3S4class}, $H_1\times SU(2)$ acts freely on $G_1$ with $G_1\bq (H_1\times SU(2))\cong S^4$.

If we equip $G_1\times SU(2)$ with a bi-invariant metric, the induced metric on $(G_1\bq (H_1\times SU(2)))\times SU(2) \cong S^4\times S^3$ is a product of round metrics and the induced $S^1$ action is by isometries, hence linear.  But, every linear action of $S^1$ on $S^4$ has a fixed point, so $S^1$ must act, up to ineffective kernel, as the Hopf map on $S^3$.

\end{proof}

\

We now state the classification results for each of the pairs $(G,H)$ in Table \ref{table:gplist} which have rational homotopy groups isomorphic to those of $S^2\times S^4$.  We use the notation $S^4\, \hat{\times}\, S^2$ to denote the unique non-trivial $S^4$ bundle over $S^2$, and we use the notation $R(\theta)$ to denote the standard $2\times 2$ rotation matrix, $R(\theta) = \begin{bmatrix} \cos\theta &-\sin \theta\\ \sin\theta & \cos\theta\end{bmatrix}$.

\begin{theorem}\label{su4su21}
For $(G,H) = (SU(4)\times SU(2), SU(3)\times SU(2)\times S^1)$, every effectively free biquotient action is equivalent to one defined by $f:H\rightarrow G^2$ with $$f(A,B,z) = \left( \left(\diag(z^m A, \overline{z}^{3m}), \diag(z^l, \overline{z}^l)\right), \left(\diag(z^n B, \overline{z}^n B), I\right) \right)$$ where $\gcd(l,m,n) = 1$ and $l|2\gcd(3m,n)$.  If $l$ is even and both $m$ and $n$ are odd, the quotient is diffeomorphic to $S^4\,\hat{\times}\, S^2$.  Otherwise, it is diffeomorphic to $S^4\times S^2$.

\end{theorem}

\begin{theorem}\label{sp2su21}

For $(G,H) = (Sp(2)\times SU(2), SU(2)^2\times S^1)$ every effectively free biquotient action is equivalent to one defined by one of two families of homomorphisms $f:H\rightarrow G^2$:  first,  $$f(p,q,z) = \left( \left(\diag(p,q), \diag(z^l, \overline{z}^l)\right), \left(\diag(z^m, z^n), I\right)\right)$$ and second, with $z = e^{i\theta}$, $$f(p,q,z) = \left( \left( R(m\theta) \cdot \diag(p,p), \diag(z^l,\overline{z}^l)\right), \left(\diag(q,z^n), I\right)\right).$$  In both cases, $\gcd(l,m,n) = 1$ and $l=1$ or $l=2$ and $m$ and $n$ are not both even.  In addition, the quotient is diffeomorphic to $S^4\,\hat{\times}\, S^2$ when $l=2$ and both $m$ and $n$ are odd.  Otherwise, it is diffeomorphic to $S^4\times S^2$.

\end{theorem}

\begin{theorem}\label{spin8su21}

For $(G,H) = (Spin(8)\times SU(2), Spin(7)\times SU(2)\times S^1)$, every effectively free biquotient action is equivalent to one given by $$f(A,B,z) = \left( \left(\diag(A,1), \diag(z^l, \overline{z}^l) \right), \left( \diag( z^mB, z^n B), I\right) \right)$$ where the notation $(z^m  B, z^n B)\subseteq Spin(8)$ is the lift of the block diagonal embedding $U(2)\subseteq \Delta SO(4)\subseteq SO(4)^2\subseteq SO(8)$.  In addition, $\gcd(l,m,n) = 1$ and either $l = 1$ or $l=2$ and $m$ and $n$ are not both even.  If $l=2$ and both $m$ and $n$ are odd, the quotient is diffeomorphic to $S^4\, \hat{\times}\, S^2$.  Otherwise, it is diffeomorphic to $S^4\times S^2$.

\end{theorem}

For the next theorem, we recall that there is, up to conjugacy, a unique non-trivial homomorphism $\mathbf{G}_2\rightarrow SO(7)$.  We let $\pi:SU(2)\rightarrow SO(3)$ denote the double covering map.

\begin{theorem}\label{spin7su21}

For $(G,H) = (Spin(7)\times SU(2), \mathbf{G}_2\times SU(2)\times S^1)$, every effectively free biquotient action is equivalent to one defined by the lift of $f:G_2\times SU(2)\times S^1\rightarrow (SO(7)\times SU(2))^2$ where, with $z=e^{i\theta}$, $$f(A,B,z) = \left( \left(A, \diag(z^l,\overline{z}^l) \right), \left(\diag(\pi(B),R(m\theta), R(n\theta)) , I\right)\right).$$   In addition, $l = \gcd(l,m,n) \in \{1,2\}$ and $m$ and $n$ have the same parity.  When $l = \gcd(l,m,n)= 2$, then $m/2$ and $n/2$ have different parities and the quotient is diffeomorphic to $S^4\,\hat{\times}\, S^2$.  Otherwise, it is diffeomorphic to $S^4\times S^2$.
\end{theorem}

We will only prove Theorem \ref{sp2su21} in the first case and Theorem \ref{spin7su21}, the other proofs being similar.  In each case, we determine the diffeomorphism type of the quotient by computing the second Stiefel-Whitney class: for the first case, we will use prior results on $5$-dimensional biquotients, while in the second, we will rely on the techniques from Section \ref{top}, particularly Theorem \ref{swclass}.

\subsubsection{Proof of part 1 of Theorem \ref{sp2su21}}\label{calc4}

We begin with $(G,H) = (Sp(2)\times Sp(1), Sp(1)^2\times S^1)$ where the action of $Sp(1)^2$ on $Sp(2)$ is given by $(p,q)\ast A  = \diag(p,q) A $.  Using Proposition \ref{diffeostabilize}, we may assume these biquotients are defined by a map $f = (f_1,f_2):H\rightarrow G\times G$ where $$f_1(p,q,z) = \left(\begin{bmatrix} p & \\ & q\end{bmatrix}, 1\right) \text{ and }f_2(p,q,z) = \left(\begin{bmatrix}z^m & \\ & z^n\end{bmatrix}, z^l\right)$$  where we assume that $\gcd(l,m,n) = 1$. Further, the map $S^1\rightarrow S^1$ with $z\mapsto \overline{z}$ is an automorphism, so we may assume $l\geq 1$.

Suppose $z$ is an $l$-th root of $1$.  Then, $f_1(z^m, z^n, z)$ is equal to $f_2(z^m, z^n, z)$ and so, by Proposition \ref{freetest}, $f_1(z^m, z^n, z) = f_2(z^m, z^n, z) \in Z(G)$.  Because $Z(Sp(2)) =\{\pm I\}$, this implies that $z^m = z^n \in \{\pm 1\}$.  In particular, every $l$-th root of $1$ must be a $2m$-th and $2n$-th root of $1$, so $l$ divides $2(m,n)$.  But $\gcd(l,m,n) = 1$, so $l = 1$ or $2$.  If $l=2$ and $z=-1$, then the equation $z^m = z^n $ forces $m$ and $n$ to have the same parity, so $m$ and $n$ must be odd.

At this point, one can immediately see that when $l=1$, the quotient is diffeomorphic to $S^4\times S^2$, independent of $m$ and $n$.  For, according to \cite{BH1}, $H$ has no nontrivial $2$-roots and the $2$-roots of $G$ all have multiplicity $4$, so Theorem \ref{swclass} thus implies the smallest possible nontrivial Stiefel-Whitney class is in dimension $4$.

\begin{proposition}\label{calc2}

When $l=2$ and $m$ and $n$ are both odd, the quotient $G\bq H$ is diffeomorphic to $S^4\,\hat{\times} \, S^2$.

\end{proposition}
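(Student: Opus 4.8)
We have $G = Sp(2) \times Sp(1)$, $H = Sp(1) \times Sp(1) \times S^1$, and $a = 2$ with $b, c$ odd. The element $(-1,-1,-1) \in H$ acts trivially, so the effective group is $H' = H/\langle(-1,-1,-1)\rangle$, and the quotient $G\bq H$ is the same as $G\bq H'$, but now the action of $H'$ is free. Since $b$ and $c$ are odd, replacing $(b,c)$ by $(b + 2k, c + 2\ell)$ changes $f_2$ by multiplying by a central element path, hence does not change the biquotient. So it suffices to treat $b = c = 1$, i.e. $f_2(p,q,z) = (zI, z)$ (with $z$ now the effective circle parameter after quotienting by the ineffective $\Z/2$).

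**The plan.** The geometric picture from Proposition \ref{S2S4class} is that $G\bq H$ is an $S^4$ bundle over $S^2$: the subgroup $Sp(1)^2 \times 1$ acts freely on $Sp(2)$ on the left with quotient $S^7 = Sp(2)/Sp(1)$, and the residual circle acts linearly on $S^4$ (the quotient of $S^7$ by the Hopf $Sp(1)$) and via the Hopf action on $S^3 = Sp(1)$. Since $\pi_1(SO(5)) = \Z/2$, there are exactly two $S^4$-bundles over $S^2$, distinguished by the second Stiefel--Whitney class; I must show this one is the nontrivial one. The cleanest route is to identify the clutching function of the bundle. First I would describe $S^4 = Sp(2)/(Sp(1)\times Sp(1))$ explicitly, or better, view $S^4 \times_{S^1} S^3$ directly: $S^2 = S^3/S^1$ (Hopf), and over $S^2$ the associated $S^4$-bundle has fiber the $S^1$-space $S^4$. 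So the structure group of the bundle reduces to the image of $S^1 \to SO(5)$ given by the linear action on $S^4$. I would compute this linear action concretely: $S^4$ is the unit sphere in $\mathbb{H} \oplus \mathbb{R}$ (thinking of $Sp(2)/(Sp(1)\times Sp(1))$), and $z$ acts as $(v, t) \mapsto (zv\bar z, t)$ or $(v,t)\mapsto(z^2 v, t)$ depending on the precise bookkeeping — the key is whether the resulting loop in $SO(5)$ is null-homotopic or generates $\pi_1(SO(5)) = \Z/2$.

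**Key steps in order.** (1) Reduce to $b = c = 1$ and pass to the free action of $H' = (Sp(1)^2 \times S^1)/(\Z/2)$. (2) Exhibit $S^4$ as the homogeneous space $Sp(2)/(Sp(1)\times Sp(1))$ (embedded block-diagonally) and trace through the residual $S^1$-action coming from right multiplication by $\diag(z,z)$; identify it as a rotation action on $S^4 \subset \mathbb{H}\oplus\mathbb{R}$, and determine whether the corresponding circle subgroup of $SO(5)$ is the one lifting to $Spin(5) = Sp(2)$ or not. (3) Equivalently and perhaps more robustly, compute $w_2(G\bq H)$ directly using Singhof's formula (Theorem \ref{swclass}) applied to the free $H'$-action: since $H'$ is not simply connected I need the $2$-group $Q_{H'}$ and its $2$-roots, and the subtlety of the $\Z/2$ quotient is precisely what can produce a nonzero class in $H^2(\,\cdot\,;\Z_2)$. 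I would compare the answer with the known fact that when $a = 1$ the bundle is trivial (so the $\Z/2$-quotient in the $a=2$ case is exactly what flips $w_2$). (4) Conclude that $w_2 \neq 0$, hence by Wu's theorem \cite{Wu} the bundle is not homotopy equivalent to the trivial one, and since there are only two $S^4$-bundles over $S^2$, it is the nontrivial one.

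**Main obstacle.** The hard part is step (2)/(3): correctly handling the ineffective $\Z/2$ kernel when applying Singhof's Stiefel--Whitney formula, since Theorem \ref{swclass} is stated for \emph{free} actions and here the action is only effectively free — one must first pass to the genuinely free action of $H' = H/(\Z/2)$ and correctly compute the $2$-roots of the non-simply-connected group $H'$ (the diagonal $\Z/2$ glues the $2$-group of $Sp(1)^2 \times S^1$ in a way that changes $Q_{H'}$ and hence the relevant class in $H^1(BQ_{H'};\Z_2)$). Alternatively, the homogeneous-geometry computation in step (2) requires care in tracking which $S^1 \subset SO(5)$ one lands in and whether it lifts to $Spin(5)$; getting a sign or a factor of two wrong there would give the opposite (and incorrect) conclusion. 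I expect that cross-checking the $w_2$ computation against the $a = 1$ case (where the bundle is known to be trivial, so $w_2 = 0$) will pin down the normalization and confirm that the parity of $a$ governs the triviality of the bundle.
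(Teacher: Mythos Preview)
Your step (2) is essentially the paper's route, and step (3) is a legitimate alternative. But step (1) contains a real error: the element $\diag(z^{2k},z^{2\ell})$ is \emph{not} central in $Sp(2)$ (the center is $\{\pm I\}$), so shifting $(b,c)$ by even integers does not produce an equivalent action in the sense of Proposition \ref{diffeostabilize}, and the induced $S^1$-action on $S^4$ genuinely changes. It is true that the diffeomorphism type of the quotient depends only on the parities of $b,c$, but that is because the associated $S^4$-bundle is classified by $\pi_1(SO(5))\cong\Z/2$ --- a fact you can only invoke \emph{after} you have identified which loop in $SO(5)$ the circle lands on, which is precisely the computation you are trying to shortcut. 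So the reduction is circular; drop it and work with general odd $b,c$ (it is no harder).

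The paper executes your step (2) via a device you do not mention and which cleanly dissolves the ineffective-kernel difficulty you flag as the main obstacle: pass through the double cover $Sp(2)=Spin(5)\to SO(5)$. Under this map $\diag(z^b,z^c)\mapsto\diag(R((b+c)\theta),R((b-c)\theta),1)$ and the left $Sp(1)^2$ maps onto $SO(4)$. The ineffective kernel of the original $H$-action is absorbed into the kernel of the cover, so one obtains a genuinely \emph{free} action of $SO(4)\times S^1$ on $SO(5)\times Sp(1)$, with the circle acting on $SO(5)$ by $\theta\mapsto\diag\!\big(R(\tfrac{b+c}{2}\theta),R(\tfrac{b-c}{2}\theta),1\big)$ and on $Sp(1)$ with degree $1$. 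The total rotation number of this loop in $SO(5)$ is $\tfrac{b+c}{2}+\tfrac{b-c}{2}=b$, which is odd, so the loop is essential in $\pi_1(SO(5))$ and the bundle is the nontrivial one. This avoids both your normalization worries in step (2) and the delicate $2$-root computation for the non-simply-connected $H'$ in step (3).
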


\begin{proof}

Consider the double cover $\pi:Sp(2)\rightarrow SO(5)$, induced by taking the second exterior power of the standard representation of $Sp(2)$ on $\C^4$.  Recall that $\pi$ maps $Sp(1)^2\subseteq Sp(2)$ onto $SO(4)\subseteq SO(5)$ and it maps $\diag(z^m,z^n)\subseteq Sp(2)$ onto $\diag(R((m+n)\theta), R((m-n)\theta),1)\subseteq SO(5)$, where $R(\theta)$ denotes the standard $2\times 2$ rotation matrix.  Then, $\pi$ induces a diffeomorphism $\overline{\pi}:Sp(2)\times Sp(1)/Sp(1)^2\times S^1 \rightarrow SO(5)\times Sp(1)/SO(4)\times S^1$ where the $S^1$ action on $SO(5)/SO(4) = S^4$ is given by multiplication by $\diag(R((m+n)\theta), R((m-n)\theta),1)$.

This $S^1$ action on $S^4$ preserves the equatorial $S^3\subseteq S^4$, and hence we see that $M^5 = S^3\times_{S^1} S^3$ is naturally a codimension 1 submanifold of $ N^6 = S^4 \times_{S^1} S^3$.  Let $i:M\rightarrow N$ denote the inclusion map.

Then we have $i^\ast(TN) = TM\oplus \nu$ where $\nu$ is the rank $1$ normal bundle.  It follows that $i^\ast(w_2(TN)) = w_2(i^\ast TN) = w_2(TM\oplus \nu) = w_2(TM)$ since $w_1(TM) = w_2(\nu) = 0$.  But, it is shown in \cite{DeV1} that $w_2(TM)\neq 0$.  It follows that $w_2(TN)\neq 0$ as well, so $N^6$ must be diffeomorphic to $S^4\, \hat{\times} S^2$.  

\end{proof}

\subsubsection{Proof of Theorem \ref{spin7su21}}

We now prove Theorem \ref{spin7su21}.  The main idea is to relate actions of $\tilde{H} = \mathbf{G}_2\times SU(2)\times S^1$ on $\tilde{G} = Spin(7)\times SU(2)$ to actions of $H = \mathbf{G}_2\times SO(3)\times S^1$ on $G = SO(7)\times SU(2)$.  We let $\pi:Spin(7)\times SU(2)\rightarrow SO(7)\times SU(2)$ denote the double cover.

To begin with, recall there is, up to conjugation, a unique non-trivial homomorphism $g_1:\mathbf{G}_2\rightarrow SO(7)$.  Let $g_2:SO(3)\rightarrow SO(7)$ denote the usual inclusion as a $3\times 3$ block.  The composition $\mathbf{G}_2\times SU(2)\rightarrow \mathbf{G}_2\times SO(3)\xrightarrow{g_1\times g_2} SO(7)^2$ admits a lift $\tilde{g}:\mathbf{G}_2\times SU(2)\rightarrow Spin(7)^2$.  Then, it is shown in \cite{Es2} that the only effectively free biquotient action of $\mathbf{G}_2\times SU(2)$ on $Spin(7)$ is induced by $\tilde{g}$.  In fact, we have $\mathbf{G}_2\backslash Spin(7) = S^7$ which $SU(2)$ then acts on via the Hopf action.  In particular, the lift of $SU(2)\rightarrow SO(3)\xrightarrow{g_2} SO(7)$ to $Spin(7)$ is injective.  We use this lift to identify $SU(2)$ as a subgroup of $Spin(7)$.

Consider any homomorphism $f:\tilde{H}\rightarrow \tilde{G}^2$ with $f|_{\mathbf{G}_2\times SU(2)\times\{I\}} = i\circ\tilde{g}$ with $i:Spin(7)\rightarrow Spin(7)\times SU(2)$ denoting the natural inclusion.  Then $\pi \circ f$ descends to define an action of $H$ on $G$.  These actions are related by the following proposition.

\begin{proposition}The $\tilde{H}$ action on $\tilde{G}$ is effectively free iff the $H$ action on $G$ is effectively free.  If both are effectively free, the biquotients $\tilde{G}\bq \tilde{H}$ and $G\bq H$ are diffeomorphic.

 \end{proposition}

\begin{proof}  It is easy to see that, in general, the lift of an effectively free action is effectively free, so we focus on the converse.  So, assume the action of $\tilde{H}$ on $\tilde{G}$ is effectively free.  As mentioned previously, the subgroup $SU(2)\subseteq Spin(7)$ projects to $SO(3)\subseteq SO(7)$.  Thus $SU(2)$ must contain $Z(Spin(7))\cong \mathbb{Z}/2\mathbb{Z}$.

Letting $\pi':\tilde{H}\rightarrow H$ denote the double covering, we see that if $\pi'(\tilde{h})\in H$ fixes a point $\pi(\tilde{g})\in G$, then either $\tilde{h}$ fixes $\tilde{g}$, or $\tilde{h}$ maps $\tilde{g}$ to another point in the fiber above $\pi(\tilde{g})$, that is, $\tilde{h} \ast \tilde{g} = z\tilde{g}$.  In the first case, $\tilde{h}$ fixes a point, so acts trivially on $\tilde{G}$, and thus, $h$ acts trivially on $G$.  So, we assume $\tilde{h}\ast \tilde{g} = z\tilde{g}$ for some $z\in Z(Spin(7))\subseteq SU(2)$.  Because $z\in SU(2)\subseteq \tilde{H}$, the point $z^{-1}\tilde{h}\in \tilde{H}$ and fixes $\tilde{g}$.  Thus, $z^{-1}\tilde{H}$ must act trivially on $\tilde{G}$ and so therefore, $\pi'(z^{-1}\tilde{h}) = h$ acts trivially on $G$. 

\

Finally, if both actions are effectively free, $\pi$ induces a covering $\overline{\pi}:\tilde{G}\bq \tilde{H}\rightarrow G\bq H$.  But $g_2:SO(3)\rightarrow SO(7)$ induces an isomorphism of fundamental groups, so $G\bq H$ is simply connected.  It follows that $\overline{\pi}$ is actually a diffeomorphism.

\end{proof}

We may, henceforth focus on biquotients of the form $(SO(7)\times SU(2))\bq (\mathbf{G}_2\times SO(3)\times S^1)$ where the action is induced from a map of the form $f=(f_1,f_2):H\rightarrow G^2$ with $$f_1(A,B,z) = (A, \diag(z^l, \overline{z}^l))\text { and } f_2(A,B,z) = (\diag(A,R(m\theta), R(n\theta)), I)$$ with $\gcd(l,m,n)=1$ and $z = e^{i\theta}$.  By precomposing by the complex conjugation automorphism $S^1\rightarrow S^1$, we assume $l\geq 1$.

\begin{proposition}\label{so7g2free} An action as above is effectively free iff it is free.  Further, it is free iff $l = 1$.

\end{proposition}

\begin{proof}

First, if $l=1$, then the projection of the $S^1$ action to the $SU(2)$ factor of $G$ is free and the projection of the $\mathbf{G}_2\times SO(3)$ action to the $SO(7)$ factor of $G$ is free, so the induced action on $SO(7)\times SU(2)$ is automatically free.

Conversely, assume the action is effectively free.  Recall that, as mentioned in \cite{Ke2}, the maximal torus of $\mathbf{G}_2\subseteq{SO(7)}$ is conjugate to $\diag(R(\phi_1), R(\phi_2), R(\phi_1+\phi_2),1)$.  The maximal torus of $SO(3)\times S^1\subseteq SO(7)$ is conjugate to $\diag(R(\alpha), R(m\theta), R(n\theta), 1)$.

Let $z=e^{i\theta}$ be any $l$-th root of $1$.  Setting $\phi_1 = m\theta$, $\phi_2 = n\theta$, and $\alpha = \phi_1 + \phi_2$, we obtain an element $h\in H$ with $f_1(h) = f_2(h) = (\diag(R(\phi_1), R(\phi_2), R(\phi_1 + \phi_2), 1), I)$.  By Proposition \ref{freetest}, we must have $f_1(h) = f_2(h) \in Z(G)$.  Since $Z(SO(7))$ is trivial, $m\theta = n\theta = 0$ mod $2\pi$.  Thus, every $l$-th root of $1$ must be an $m$-th and $n$-th root of $1$, and so $l|\gcd(m,n)$.  But then $1 = \gcd(l,m,n) = l$.

\end{proof}

Now that we have classified all actions, we classify the quotients.

\begin{proposition}\label{g2sodiff}

Consider the action of $\mathbf{G}_2\times SO(3)\times S^1$ on $SO(7)\times SU(2)$ given by $$(A,B,z)\ast (C,D) = (AC\diag(B,R(m\theta), R(n\theta))^{-1} , \diag(z,\overline{z}) D).$$  The quotient is diffeomorphic to $S^4\times S^2$ when $m$ and $n$ have the same parity, and is diffeomorphic to $S^4\,\hat{\times}\, S^2$ when $m$ and $n$ have opposite parities.

\end{proposition}

\begin{remark}
The map $f:H\rightarrow G^2$ lifts to a map $\tilde{H}\rightarrow \tilde{G}^2$ iff $m$ and $n$ have the same parity.  Thus, according to Proposition \ref{g2sodiff}, when the quotient is diffeomorphic to $S^4\times S^2$, the map lifts, giving half of Theorem \ref{spin7su21}.  On the other hand, when $m$ and $n$ have opposite parities, we can replace $m$, $n$, and $l=1$ by $2m$, $2n$, and $2l = 2$.  This new action has the same orbits as the old one, so the quotient space is diffeomorphic to $S^4\, \hat{\times}\, S^2$.  However, the new action lifts.  This gives the other half of Theorem \ref{spin7su21}.
\end{remark}

\begin{proof}  (Proof of Proposition \ref{g2sodiff})

By the discussion following Proposition \ref{S2S4class}, we know each such biquotient is diffeomorphic to either $S^4\times S^2$ or $S^4\,\hat{\times}\, S^2$.  Using Theorem \ref{swclass}, we now compute the second Stiefel-Whitney class of the tangent bundle to these biquotients, showing that $w_2$ is non-zero precisely when $m$ and $n$ have opposite parities.

We will use a different action (which is equivalent, via Proposition \ref{diffeostabilize}):  $f = (f_1, f_2): H_1\times H_2 = \mathbf{G}_2 \times (SO(3)\times S^1)\rightarrow G^2$ with $$f_1(A,B,z) = (A,I)\text { and } f_2(A,B,C) = (\diag(B, R(m\theta), R(n\theta)), \diag(z, \overline{z}))$$ with $z = e^{i\theta}$.  Then $H$ has full rank in $G$ and the biquotient has the form $H_1\backslash G/H_2$.

Thus, by Theorem \ref{ringmaxrank}, the cohomology ring $H^\ast(G\bq H; \mathbb{Z}_2)$ is isomorphic to the ring $H^\ast(BH_1; \mathbb{Z}_2)\otimes_{H^\ast(BG;\mathbb{Z}_2)} H^\ast(BH_2; \mathbb{Z}_2)$.  We will suppress the coefficient ring $\mathbb{Z}_2$ for the remainder of the proof.

Now, as shown in \cite{Bo1}, $H^\ast(B\mathbf{G}_2)$ has no elements of degree $1$ or $2$.  In addition, $H^\ast(BSU(2))\cong H^\ast(\mathbb{H}P^\infty)$ also has no elements of degree $1$ or $2$.  It follows that we may identify $H^2(G\bq H)$ with $H^2(BH_2)/B{f_2}^\ast(H^2(BSO(7)\times \{I\}))$.  We now work out the map $B{f_2}^\ast$ more explicitly.

We let $Q'$ denote the standard maximal $2$-group of $O(3)\times S^1$, so $Q'\cong \mathbb{Z}_2^4$ is given by $\{\diag(\pm 1,\pm 1,\pm 1)\}\times \{\pm 1\}$.  We let $Q\cong \mathbb{Z}_2^3\subseteq Q'$ denote $Q'\cap H_2$; $Q$ is a maximal $2$-group of $H_2$ \cite{BH1}.  Likewise, we let $R'\cong \mathbb{Z}_2^7$ and $R\cong \mathbb{Z}_2^6 = R'\cap SO(7)$ denote the maximal $2$-group of $O(7)$ and $SO(7)$, respectively.

Consider the basis $\{x_1,x_2,x_3,w\}\subseteq Hom(Q',\mathbb{Z}_2)$ where $x_1$ is dual to the element $(\diag(-1,1,1), I)$ and similarly for $x_2$ and $x_3$, and $w$ is dual to $(I, -1)$.  For any $q\in Q$, the first factor of $q$ has determinant $1$, and so $x_1(q) + x_2(q) + x_3(q) = 0$.  Using this, we may identify $Hom(Q,\mathbb{Z}_2)$ with $Hom(Q', \mathbb{Z}_2)/\sum x_i$.

Using the Leray-Serre spectral sequence associated to the fibration $Q\rightarrow EQ\rightarrow BQ$, we may identify $H^\ast(BQ)$ with $\mathbb{Z}_2[ \overline{x}_i, \overline{w}]/\sum \overline{x}_i$ where $|\overline{x}_i| = |\overline{w}| = 1$, with $dx_i = \overline{x}_i$ and $dw = \overline{w}$.

Likewise, if $y_1,..., y_7\in Hom(R',\mathbb{Z}_2)$ are the duals to the elements of the form $\diag(1,..,-1,...1)\in R'$, then for any $r\in R$, we have $\sum y_i(r) = 0$, so we identify $Hom(R,\mathbb{Z}_2)$ with $Hom(R',\mathbb{Z}_2)/\sum y_i$.  We may also identify $H^\ast(BR)$ with $\mathbb{Z}_2[\overline{y}_i]/\sum \overline{y}_i$ with $|\overline{y}_i| = 1$.

Now, consider the inclusions $Q\rightarrow H_2$ and $R\rightarrow SO(7)$.  These induce maps $H^\ast(BH_2)\rightarrow H^\ast(BQ)$ and $H^\ast(BSO(7))\rightarrow H^\ast (BR)$.  In \cite{Bo1}, Borel shows that these induced maps are injective, identifying $H^\ast(BSO(7))$ and $H^\ast(BH_2)$ with $$\mathbb{Z}_2[\sigma_2(\overline{y}_i), ..., \sigma_7(\overline{y}_i)]/\sum \overline{y}_i \text { and }\mathbb{Z}_2[\sigma_2(\overline{x}_i), \sigma_3(\overline{x}_i), \overline{w}^2]/\sum \overline{x}_i$$ respectively.  Here, $\sigma_j(\overline{y}_i)$ denotes the $j$-th elementary symmetric polynomial in the $\overline{y}_i$ variables.  Thus $H^2(BSO(7))=\mathbb{Z}_2 = \langle \sigma_2(\overline{y}_i)\rangle$ and $H^2(BH_2) = (\mathbb{Z}_2)^2 = \langle \sigma_2(\overline{x}_i), \overline{w}^2\rangle.$

The homomorphism $f_2$ maps $Q$ into $R$, and so, as one can easily verify, $f_2$ induces a map $B{f_2}^\ast:H^\ast(BR)\rightarrow H^\ast(BQ)$, with $B{f_2}(\overline{y}_i) = \begin{cases} \overline{x}_i & i = 1,2,3\\ m\overline{w} & i = 4,5  \\ n\overline{w} & i= 6,7\end{cases}$

Now, an easy calculation shows $B{f_2}^\ast(\sigma_2(\overline{y}_i)) = \sigma_2(\overline{x}_i) + (m+n)^2\overline{w}^2.$  It follows that we may identify $H^2(G\bq H)$ with $\mathbb{Z}_2\langle \sigma_2(\overline{x}_i), \overline{w}^2\rangle/ \sigma_2(\overline{x}_i) + (m+n)^2\overline{w}^2.$  In particular, $0\neq \phi_H^\ast \overline{w}^2\in H^2(G\bq H)$.

The $2$-roots of $SO(7)$ and $SO(3)$ are, according to \cite{BH1}, $\overline{y}_i - \overline{y}_j$ (which is equal to $\overline{y}_i + \overline{y}_2$ mod $2$) and $\overline{x}_i - \overline{x}_j$, each with multiplicity $1$, and $S^1$ has no non-trivial $2$-roots.

Using these $2$-roots, together with Theorem \ref{swclass}, we see $$w(G\bq H) = \phi_H^\ast\left[B{f_2}^\ast \left(\prod_{1\leq i < j \leq 7}(1+\overline{y}_i + \overline{y}_j)\right) \prod_{1\leq k < l \leq 3}(1+\overline{x}_k+\overline{x}_l)^{-1}\right].$$

Because $B{f_2}^\ast(\overline{y}_i) = \overline{x}_i$ for $i = 1,2,3$, this product reduces to $$w(G\bq H) = \phi_H^\ast\left[\prod_{\substack{1\leq i < j \leq 7 \\  3< j} }B{f_2}^\ast (1+\overline{y}_i + \overline{y}_j)\right].$$  Further, since $B{f_2}^\ast(\overline{y}_4)= m\overline{w} = B{f_2}^\ast(\overline{y}_5)$, $$B{f_2}^\ast(1 + \overline{y}_i + \overline{y}_4)(1+\overline{y}_i + \overline{y}_5) = (1+ B{f_2}^\ast\overline{y}_i + m\overline{w})^2 = (1+B{f_2}^\ast \overline{y}_i^2 + m^2 \overline{w}^2)$$ and a similar result holds for $\overline{y}_6$ and $\overline{y}_7$.  Thus, the product reduces to $$\phi_H^\ast\left[\prod_{1\leq i\leq j} (1+B{f_2}^\ast \overline{y}_i^2 + m^2\overline{w}^2)(1+B{f_2}^\ast\overline{y}_i^2 + n^2 \overline{w}^2)\right]$$ and, in particular, \begin{align*} w_2(G\bq H) &= \phi_H^\ast\left[\sum_{1\leq i\leq 7} (B{f_2}^\ast \overline{y}_i^2 +m^2\overline{w}^2) + (B{f_2}^\ast \overline{y}_i^2 + n^2\overline{w}^2)\right]\\ &= 7(m^2+n^2)\phi_H^\ast(\overline{w}^2)\\ &= (m^2+n^2)\phi_H^\ast(\overline{w}^2).\end{align*}

Since we have already shown $\phi_H^\ast \overline{w}^2\neq 0$, we see $w_2(G\bq H)$ is nontrivial iff $m$ and $n$ have different parities.

\end{proof}

\subsection{\texorpdfstring{Biquotients with $\pi_\ast(G\bq H)_\Q\cong \pi_\ast(S^2\times \mathbb{C}P^2)_\Q$}{Rational homotopy groups like S2 x CP^2 }}\label{analysiss2cp2}

In this section, we classify all biquotient actions whose quotients have rational homotopy groups isomorphic to those of $S^2\times \mathbb{C}P^2$, that is, when $$(G,H) \in \{ (SU(3), T^2), (SU(3)\times SU(2), SU(2)\times T^2), (SU(4)\times SU(2), Sp(2)\times T^2)\}.$$
When $(G,H) = (SU(3),T^2)$, the classification has been completed by Eschenburg \cite{Es1}, so we focus on the other two cases in Table \ref{table:gplist}.  To that end, let $G = G_1\times SU(2)$, $H = H_1\times T^2$, with $(G_1,H_1) = (SU(3), SU(2))$ or $(G_1,H_1) = (SU(4), Sp(2)).$

\begin{proposition} If a biquotient action of $H$ on $G$ is effectively free, then the projection of the action of the $H_1$ factor of $H$ to the the $SU(2)$ factor of $G$ is trivial.  Further, the projection of the action to the first factor of $G$ induces the standard action with $G_1/H_1 = S^5$.  In particular, every such biquotient is diffeomorphic to a manifold of the form $S^5\times_{T^2} S^3$ for an effectively free linear $T^2$ action.

\end{proposition}

\begin{proof}

By Proposition \ref{su2biquotients}, if the projection of the action of $H_1$ to the $SU(2)$ factor of $G$ is non-trivial, then $H$ must act freely on $G_1$.  This is impossible for rank reasons, so $H_1$ acts trivially on the $SU(2)$ factor of $G$.

Now, consider the projection of the $T^2$ action on the $SU(2)$ factor of $G$.  Equipping $SU(2)$ with a bi-invariant metric, which has positive curvature, the $T^2$ action is isometric.  It follows from \cite{GSe} that there is a point in $SU(2)$ with isotropy group containing an $S^1\subseteq T^2$.

It follows that the projection of the $H_1\times S^1$ action to $G_1$ must be effectively free, giving a $4$-dimensional biquotient.  These have been classified \cite{KZ,DeV1,Es2} - the only examples with $H$ a product of a semi-simple group with a circle have $G_1\bq H_1$ equal to either the homogeneous space $SU(3)/SU(2) = S^5$, or equal to $SU(4)/Sp(2)$.  But, recalling the canonical double cover $SU(4)\rightarrow SO(6)$ restricts to a double cover $Sp(2)\rightarrow SO(5)$, we identify $SU(4)/Sp(2) = SO(6)/SO(5) = S^5$.  Thus, in either case, $G_1/H_1 = S^5$.

\end{proof}

We now concretely identify the $T^2$ action on $S^5\times S^3$.  In the hardest case, when $(G_1,H_1) = (SU(4), Sp(2))$, we identify $SU(4)/Sp(2)$ with $SO(6)/SO(5) = S^5$ using the double cover $SU(4)\rightarrow SO(6)$.  Recall the construction of this double cover: the standard representation $\mathbb{C}^4$ of $SU(4)$ induces a representation of $SU(4)$ on $\Lambda^2 \mathbb{C}^4$.  This $6$-dimensional representation has real type, defining a map $SU(4)\rightarrow SO(6)$, the double cover.  In particular, under this double covering map, the maximal torus of $SU(4)$, consisting of matrices of the form $\diag(z_1, z_2, z_3, \overline{z}_1 \overline{z}_2 \overline{z}_3)$, maps, up to conjugacy, to $\diag(R(\theta_2 + \theta_3), R(\theta_1 + \theta_3), R(\theta_1 + \theta_2))$ where $z_j = e^{i\theta_j}$.

\begin{proposition}\label{convert}(1)  Under the usual identification $SU(2)\cong S^3$, the $T^2$ biquotient action on $SU(2)$ given by $$(z,w)\ast B = \diag(z^{k_1} w^{l_1}, \overline{z}^{k_1}\overline{w}^{l_1}) B \diag(z^{k_2} w^{l_2}, \overline{z}^{k_2}\overline{w}^{l_2})^{-1}$$ is equivalent to the $T^2$ action on $S^3 = \{b=(b_1,b_2)\in \mathbb{C}^2:|b| = 1\}$ given by $$(z,w)\ast b = (z^{k_1 - k_2} w^{l_1-l_2} b_1, z^{k_1 + k_2} w^{l_1+l_2} b_2).$$

(2)  Under the usual identification of $SU(3)/SU(2)$ with $S^5 = \{a = (a_1, a_2, a_3)\in \mathbb{C}^3:|a| = 1\}$, the $T^2$ biquotient action on $S^5$ induced from $SU(2)\times T^2\rightarrow SU(3)^2$ with $$(A,z,w)\mapsto \left(\diag(\overline{z}^{m_1} \overline{w}^{n_1} A, z^{2m_1} w^{2n_1}) , \diag(z^{m_2}w^{n_2}, z^{m_3} w^{n_3}, \overline{z}^{m_2 + m_3}\overline{w}^{n_2 + n_3})\right)$$ is given by $$(z,w)\ast a = (z^{2m_1 - m_2} w^{2n_1-n_2}a_1,  z^{2m_1-m_3} w^{2n_1-n_3}a_2,  z^{2m_1 + m_2 + m_3} w^{2n_1 + n_2 + n_3}a_3).$$

(3)  The $T^2$ biquotient action on $S^5 = SU(4)/Sp(2)$ induced from $Sp(2)\times T^2\rightarrow SU(4)^2$ with $$(A,z,w) \mapsto \left( A, \diag(z^{m_1} w^{n_1} , z^{m_2} w^{n_2}, z^{m_3}w^{n_4}, \overline{z}^{m_1+m_2+m_3}\overline{w}^{n_1 + n_2 + n_3})\right)$$ has the form $$(z,w)\ast a = (z^{m_2 + m_3} w^{n_2 + n_3} a_1, z^{m_1+m_3}w^{n_1 + n_3} a_2, z^{m_1+m_2} w^{n_1 + n_2} a_3).$$

\end{proposition}

It follows easily from this proposition that every linear $T^2$ action on $S^5\times S^3$ is orbit equivalent to one induced from a biquotient of the form $(SU(3)\times SU(2))\bq (SU(2)\times T^2)$ and to one induced from a biquotient of the form $(SU(4)\times SU(2))\bq (Sp(2)\times T^2)$.

For the remainder of this section, we focus on linear $T^2$ actions on $S^5\times S^3$.  By reparamaterizing $T^2$, we may assume that any action has the form $$(z,w)\ast(a,b) = (z^{m_1} a_1, z^{m_2}w^{n_2}a_2, z^{m_3}w^{n_3} a_3, w^{l_1} b_1, z^{k_2} w^{l_2} b_2).$$  Further, we may assume $\gcd(m_1, m_2, m_3, k_2) = \gcd(n_2, n_3, l_1, l_2) = 1$.  By using the diffeomorphisms of $S^5\times S^3$ mapping $a_i\rightarrow \overline{a_i}$ or $b_j$ to $\overline{b_j}$, as well as the automorphism $S^1\rightarrow S^1$ with $z\mapsto \overline{z}$, we may assume all $m_i$, $l_j$, and $k_2$ are non-negative.

\begin{proposition}\label{cp2s2actions} Such an action is effectively free iff $m_1 = m_2 = m_3 = l_1 = l_2 = 1$ and up to interchanging $n_2$ and $n_3$, $$(n_2, n_3, k_2)\in \{(0,0, k_2), (n_2, n_3,0), (1,1,2), (2,2,1), (0,2,1), (0,1,2)\}.$$  Further, such an action is effectively free iff it is free.

\end{proposition}

\begin{proof}

The key observation is that if $(z,w)\in T^2$ fixes a point $(a,b)\in S^5\times S^3$, then it fixes a point with every coordinate $1$ or $0$.  So, it is enough to focus on these six points.

Let $z$ be an $m_1$-th root of 1.  Then $(z,1)$ fixes $(1,0,0,1,0)\in S^5\times S^3$, and thus, must fix every point of $S^5\times S^3$.  This implies $m_1$ divides each of $m_2,m_3$, and $k_2$.  In particular, $1 = \gcd(m_1, m_2, m_3, k_2) = |m_1|$.  An analogous argument shows that $m_2 = m_3 = l_1 = l_2 = 1$.

On easily sees that these conditions on $m_i$ and $l_j$ are necessary and sufficient to guarantee the $T^2$ action moves points with either $a_1 = 1$ or $b_1 = 1$.  It now follows that any nontrivial $(z,w)\in T^2$ moves the point $(1,0,0,1,0)$, so the action cannot have ineffective kernel.  In particular, it is free iff effectively free.

Finally, we seek conditions on $n_2$, $n_3$, and $k_2$ which are necessary and sufficient to guarantee every non-identity element of $T^2$ action moves $(0,1,0,0,1)$ and $(0,0,1,0,1)$.

Now, $(z,w)\ast(0,1,0,0,1) = (0,1,0,0,1)$ iff $zw^{n_1} = 1 = z^{k_2} w$.  There is only one solution to this equation, $(z,w) = (1,1)$, iff $|1-k_2 n_2|=1$, so $|1-k_2 n_2| = 1$ is necessary to have a free action.  The same argument using the point $(0,0,1,0,1)$ shows $|1-k_2 n_3| = 1$ is necessary to have a free action.

It is easy to see that the only integer solutions to $|1-k_2 n_3| = |1-k_2 n_2| = 1$ are, up to order,  $$(n_2, n_3, k_2)\in \{(0,0, k_2), (n_2, n_3,0), (1,1,2), (2,2,1), (0,2,1), (0,1,2)\}.$$ 

\end{proof}

We will use the notation $A(n_2, n_3, k_2)$ to denote these biquotients.  Consider, first, biquotients of the form $A(n_1, n_2, 0)$.  Since $k_2 = 0$, the circle factor of $T^2$ given by the $z$-coordinate, $S^1_z$, acts only on $S^5$.  Further, the other factor, $S^1_w$, acts freely on the $S^3$ with quotient $S^2$ and the action of $S^1_w$ on $S^5$ commutes with the $S^1_z$ action.  In terms of the concrete description $G_1\times SU(2) = SU(3)\times SU(2)$ with $H = SU(2)\times T^2 = SU(2)\times S^1_z \times S^1_w$, $SU(2)\times S^1_z$ acts freely on $G_1$ with quotient $\mathbb{C}P^2$, $S^1_w$ acts freely on $SU(2)$, and the $S^1_w$ action on $G_1$ normalizes the $SU(2)\times S^1_z$ action.

In other words, the biquotients $A(n_2,n_3,0)$ are all decomposable, naturally having the structure of $\mathbb{C}P^2$-bundles over $S^2$.  Each such bundle is associated to the Hopf bundle $S^1\rightarrow S^3\rightarrow S^2$ via the $S^1_w$ action on $\mathbb{C}P^2$, so the structure group is a circle.

If we equip $G = G_1\times SU(2)$ with a bi-invariant metric, the resulting metric on $G_1\bq (SU(2)\times S^1_z) = \mathbb{C}P^2$ is the Fubini-Study metric, and the $S^1_w$ action is isometric.  In particular, $S^1_w\subseteq K = Iso(\mathbb{C}P^2) = SU(3)/(\mathbb{Z}/3\mathbb{Z})$.

With $S^1$ acting on $S^3$ via the Hopf map and on $K$ via the inclusion, we can form the space $S^3\times_{S^1} K$; projection onto the first factor gives this space the structure of a principal $K$ bundle over $S^2$.  Note then that $A(n_2,n_3,0)\cong \mathbb{C}P^2\times_{S^1} S^3 \cong \mathbb{C}P^2 \times_{K}(S^3\times_{S^1} K)$ (see, for example, \cite{Mi1}).

Now, principal $K$ bundles over $S^2$ are in one to one correspondence with $[S^2, BK]$, homotopy classes of maps from $S^2$ into $BK$.  Since $\pi_1(K) = \mathbb{Z}/3\mathbb{Z}$, $\pi_2(BK) = \mathbb{Z}/3\mathbb{Z}$.  In particular, $S^3\times_{S^1} K$ is one of three principal bundles.  Hence, biquotients of the form $A(n_1,n_2, 0) = \mathbb{C}P^2 \times_{K}(S^3\times_{S^1} K)$ fall into at most three diffeomorphism types.  In fact, we will show the two non-trivial bundles have diffeomorphic total spaces, so biquotients of the form $A(n_1,n_2,0)$ fall into only two diffeomorphism types.

In a similar manner, one can see the biquotients $A(0,0,k_2)$ are decomposable, having the structure of an $S^2$-bundle over $\mathbb{C}P^2$ with structure group $S^1$ acting linearly.  Further, since every principal $S^1$-bundle over $\mathbb{C}P^2$ is one of $S^1\times \mathbb{C}P^2$, $S^5$, or a lens space, all of which are homogeneous, we see that the total space of every $S^2$ bundle over $\mathbb{C}P^2$ with structure group $S^1$ is of the form $P\times_{S^1} S^2$ with $P$ a biquotient.  But $P\times_{S^1} S^2$ is obviously a biquotient.  Thus, we have shown the following proposition.

\begin{proposition}\label{s2buncp2}

The total space of every $S^2$-bundle over $\mathbb{C}P^2$ with the structure group reducing to $S^1$ acting linearly on $S^2$ is a biquotient.

\end{proposition}

We now state the diffeomorphism classification.

\begin{theorem}\label{s2cp2diffeo} With the following exceptions, the homotopy types of the two biquotients $A(n_2,n_3, k_2)$ and $A(n_2', n_3', k_2')$ are distinct unless, up to permuting $n_2$ and $n_3$, we have $(|n_2|, |n_3|, |k_2|) = (|n_2'|, |n_3'|, |k_2'|)$.

(1)  $A(n_2, n_3, 0)$ is diffeomorphic to $ A(n_2', n_3', 0)$ if $n_2 + n_3 = \pm( n_2' + n_3')$ mod $3$.

(2)  $A(0,2,1)$, $A(2,2,1)$, and $A(0,0,1)$ are diffeomorphic.

(3)  $A(0, 1, 2)$, $A(1,1,2)$, and $A(0,0,2)$ are diffeomorphic.

\end{theorem}

In order to prove Proposition \ref{s2cp2diffeo}, we first compute cohomology rings and characteristic classes.  The results are summarized in the following proposition.

\begin{proposition}\label{s2cp2top}  With $N = n_2 + n_3$, the cohomology ring $H^\ast(A(n_2,n_3,k_2))$ is isomorphic to $\mathbb{Z}[\overline{u}_1, \overline{u}_2]/I$ where $|\overline{u}_i|  =2$ and $I$ is an ideal generated by two relations given by columns $2$ and $3$ in Table \ref{table:s2cp2top}.  Further, the first Pontryagin class, as well as second and fourth Stiefel Whitney classes are listed, where, for $w_2$ and $w_4$, $\overline{u}_1$ and $\overline{u}_2$ are understood to be taken mod $2$.

\begin{table}[h]

\begin{center}

\begin{tabular}{|c|c|c|c|c|c|}

\hline

$(n_2,n_3,k_2)$ & $\overline{u}_2^2 =$ & $\overline{u}_1^3 = $ & $p_1$ & $w_2$ & $w_4$ \\

\hline

\hline

$(0,0,k_2)$ & $ -k_2 \overline{u}_1 \overline{u}_2$ & $ 0$ & $(k_2^2 + 3) \overline{u}_1^2$ & $(k_2 + 1) \overline{u}_1$ & $(k_2+1) \overline{u}_1^2 $\\

\hline

$(n_2,n_3, 0)$ & $ 0$ & $-N \overline{u}_1^2 \overline{u}_2$ & $3\overline{u}_1^2 + 2N \overline{u}_1 \overline{u}_2 $ & $ \overline{u}_1 + N\overline{u}_2 $ & $ \overline{u}_1^2 $ \\

\hline

$(1,1,2)$ & $-2\overline{u}_1 \overline{u}_2 $ & $0$ & $7 \overline{u}_1^2$ & $\overline{u}_1 $ & $ \overline{u}_1^2 $ \\

\hline

$(2,2,1)$ & $-\overline{u}_1 \overline{u}_2$ & $0$ & $4\overline{u}_1^2 $ & $ 0$ & $ 0 $ \\

\hline

$(0,1,2)$ &  $-2\overline{u}_1 \overline{u}_2$ & $ -\overline{u}_1^2 \overline{u}_2 $ & $7\overline{u}_1^2$ & $ \overline{u}_1 + \overline{u}_2 $ & $ \overline{u}_1^2 $ \\

\hline

$(0,2,1)$ & $ -\overline{u}_1 \overline{u}_2 $ & $-2\overline{u}_1^2 \overline{u}_2$ & $4\overline{u}_1^2$ & $ 0$ & $ 0 $\\

\hline

\end{tabular}\caption{Topology of $A(n_2,n_3, k_2)$}\label{table:s2cp2top}

\end{center}

\end{table}

\end{proposition}

It follows easily from this proposition that the cohomology ring of $A(n_2,n_3,k_2)$ is torsion free.  We will eventually see the ring structure completely characterize these examples up to diffeomorphism.  Our key tool for proving this is the following theorem \cite{Ju,Wal,Zu}.

\begin{theorem}(Jupp, Wall, Zubr)\label{6dimclass}

Suppose $M_1$ and $M_2$ are closed simply connected $6$-manifolds whose cohomology rings are torsion free.  Then $M_1$ is diffeomorphic to $M_2$ iff there is an isomorphism between their integral cohomology rings which maps the first Pontryagin of $M_1$ to that of $M_2$ and whose mod $2$ restriction maps the Stiefel-Whitney classes of $M_1$ to those of $M_2$.

\end{theorem}

Using the methods of Section \ref{linearsphere}, we now compute the cohomology rings and characteristic classes of $A(n_2,n_3,k_2)$, proving Proposition \ref{s2cp2top}.  In fact, the two matrices $A_1$ and $A_2$ describing the maps $H_1(T^2)\rightarrow H_1(T_{U(3)\times U(2)})$ are $A_1 = \begin{bmatrix} 1 & 0 \\ 1 & n_2 \\ 1 & n_3\end{bmatrix}$ and $A_2 = \begin{bmatrix} 0 & 1 \\ k_2 & 1\end{bmatrix}.$  Hence, Proposition \ref{typcalc} gives the cohomology ring of $A(n_2,n_3,k_2)$ as $$\mathbb{Z}[\overline{u}_1, \overline{u}_2]/\langle \sigma_3(A_1^t u), \sigma_2(A_2^t u) \rangle = \mathbb{Z}[\overline{u}_1, \overline{u}_2]/\langle \overline{u}_1(\overline{u}_1 + n_2 \overline{u}_2)(\overline{u}_1 + n_3 \overline{u}_2), \overline{u}_2(k_2 \overline{u}_1 + \overline{u}_2)\rangle.$$  Substituting in the admissible value of $(n_2,n_3,k_2)$ and simplifying gives the first three columns of Table \ref{table:s2cp2top}.

We now compute the characteristic classes of $A(n_2,n_3,k_2)$ in the following proposition.  Substituting the admissible values for $(n_2,n_3,k_2)$ and using appropriate relations fills in the rest of Table \ref{table:s2cp2top}.

\begin{proposition}\label{s2cp2calc}  The characteristic classes $p_1$, $w_2$, and $w_4$ of $A(n_2,n_3,k_2)$ are as follows: \begin{align*} p_1 &= (k_2^2 + 3) \overline{u}_1^2 + (n_2^2 + n_3^2)\overline{u}_2^2 + 2(n_1 + n_2) \overline{u}_1 \overline{u}_2 \\ w_2 &= (k_2 + 1) \overline{u}_1 + (n_2 + n_3)\overline{u}_2 \\ w_4 &= (k_2 + 1)\overline{u}_1^2 + n_2 n_3\overline{u}_2^2 + k_2(n_2+n_3)\overline{u}_1 \overline{u}_2.\end{align*}  For $w_2$ and $w_4$, $\overline{u}_1$ and $\overline{u}_2$ are taken mod $2$.

\end{proposition}

\begin{proof}We use the notation of Section \ref{linearsphere}: for $G = U(3)\times U(2)$ and $H = T^2\times U(2)\times U(1)$, we let  $\overline{x}_1, \overline{x}_2, \overline{x}_3, \overline{y}_1, \overline{y}_2 \in H^2(BT_G)$ be the transgressions of the elements of $H^1(T_G)$ which are dual to the canonical generators of $H_1(T_G)$, and similarly for $\overline{u}_1,\overline{u}_2, \overline{s}_1, \overline{s}_2, \overline{t}\in H^2(BT_H)$.

The map $Bf_1^\ast:H^2(BT_G)\rightarrow  H^2(BT_H)$ is computed just as in Section \ref{linearsphere}.  In particular, we have $Bf_1^\ast \overline{x}_i = \overline{x}_i A_1^t $ and likewise, $Bf_1^\ast \overline{y}_i =\overline{y}_i A_2^t$.

Further, as is seen in Section \ref{linearsphere}, from the Leray-Serre spectral sequence associated to the fibration $G\rightarrow G\bq H\rightarrow BH$, we note that $\phi_H^\ast$ identifies $\sigma_1(\overline{s})$ with $ \sigma_1( \overline{u} A_1^t ) = 3\overline{u}_1 + (n_2 + n_3)\overline{u}_2$.  Likewise, $\sigma_2(\overline{s})$ is identified with $\sigma_2(\overline{u} A_2^t ) = \overline{u}_1(2\overline{u}_1 + (n_2 + n_3)\overline{u}_2) + (\overline{u}_1 + n_2 \overline{u}_2)(\overline{u}_1 + n_3 \overline{u}_2)$  and $\overline{t}$ is identified with $\sigma_1(\overline{u} A_2^t ) = k_2 \overline{u}_1 + 2 \overline{u}_2.$

It follows that $(\overline{s}_1 - \overline{s}_2)^2 = (\overline{s}_1 + \overline{s}_2)^2 - 4 \overline{s}_1\overline{s}_2$ is identified with $(3\overline{u}_1 + (n_2 + n_3)\overline{u}_2)^2 -4(\overline{u}_1(2\overline{u}_1 + (n_2 + n_3)\overline{u}_2) + (\overline{u}_1 + n_2 \overline{u}_2)(\overline{u}_1 + n_3 \overline{u}_2))$ which simplifies to $-3\overline{u}_1^2 -2(n_2 + n_3) \overline{u}_1\overline{u}_2 + (n_2-n_3)^2 \overline{u}_2^2.$

Then, by equation \eqref{firstpont}, we have \begin{align*} p_1 =& \phi_H^\ast \left[ Bf_1^\ast \left(\sum_{1\leq i < j\leq 3}(\overline{x}_i-\overline{x}_j)^2\right) +  Bf_1^\ast (\overline{y}_1-\overline{y}_2)^2 - (\overline{s}_1 - \overline{s}_2)^2\right] \\ =& n_2^2 \overline{u}_2^2 + n_3^2\overline{u}_2^2 + (n_2 - n_3)^2\overline{u}_2^2 + k_2^2 \overline{u}_1^2 \\ &-\left(-3\overline{u}_1^2 -2(n_2 + n_3) \overline{u}_1\overline{u}_2 + (n_2-n_3)^2 \overline{u}_2^2\right)\\ =& (k_2^2 + 3)\overline{u}_1^2 + (n_2^2 + n_3^2)\overline{u}_2^2 + 2(n_2+n_3)\overline{u}_1 \overline{u}_2. \end{align*}

\

We now compute the Stiefel-Whitney classes.  Note that $w_6 = 0$, being mod $2$ reduction of the Euler characteristic.  Using Proposition \ref{easysw}, we see \begin{align*} w &= \phi_H^\ast\left(Bf_2^\ast\left[\prod_{1\leq i < j \leq 3}(1+\overline{x}_i +\overline{x}_j)\right]Bf_2^\ast(1 + \overline{y}_1 + \overline{y}_2) \left( 1 + \overline{s}_1 + \overline{s}_2\right)^{-1}\right)\\ &=  \phi_H^\ast \left((1 + \overline{s}_1 + \overline{s}_2)(1+\overline{s}_1)(1+\overline{s}_2)(1+\overline{t})(1+\overline{s}_1 + \overline{s}_2)^{-1}\right)\\ &= \phi_H^\ast\left( (1 + \overline{s}_1)(1+\overline{s}_2)(1+\overline{t}) \right)\\  &= \phi_H^\ast\left(1 + \overline{s}_1 + \overline{s}_2 +  \overline{t} + \overline{s}_1 \overline{s}_2 + (\overline{s}_1 + \overline{s}_2) \overline{t} \right) \\  &=   1 + (k_2+1)\overline{u}_1 + (n_2 + n_3)\overline{u}_2 + (k_2 + 1)\overline{u}_1^2 + n_2 n_3 \overline{u}_2^2 + k_2(n_2+n_3) \overline{u}_1\overline{u}_2.  \end{align*}

\end{proof}

We now analyze the cohomology rings in more detail.

\begin{proposition}\label{s2cp2ring}  
With the following exceptions, two biquotients $A(n_2,n_3, k_2)$ and $A(n_2', n_3', k_2')$ have non-isomorphic cohomology rings unless, up to permuting $n_2$ and $n_3$, $(|n_2|, |n_3|, |k_2|) = (|n_2'|, |n_3'|, |k_2'|)$.

(1)  $H^\ast(A(n_2, n_3, 0))\cong H^\ast(A(n_2', n_3', 0))$ if $n_2 + n_3 = \pm( n_2' + n_3')$ mod $3$.

(2)  $H^\ast(A(0,2,1))\cong H^\ast(A(0,0,1))\cong H^\ast(A(2,2,1))$.

(3)  $H^\ast(A(0, 1, 2))\cong H^\ast(A(0,0,2)) \cong H^\ast(A(1,1,2))$.

Further, for every exceptions, there is an isomorphism which carries characteristic classes to characteristic classes.

\end{proposition}

Upon proving Proposition \ref{s2cp2ring}, we will have, with the aid of Theorem \ref{6dimclass}, proved Theorem \ref{s2cp2diffeo}.

\begin{proof}(Proof of Proposition \ref{s2cp2ring})

We first prove the exceptions.  If $(n_2', n_3', 0)$ is another triple with $n_2 + n_3 = \epsilon( n_2' + n_3') \pmod{3}$ with $\epsilon\in\{\pm1\}$, set $\lambda = \frac{2}{3}((n_2' + n_3')\epsilon - n_2 - n_3)\in \mathbb{Z}$.  Then the map $H^2(A(n_2,n_3,0))\rightarrow H^2(A(n_2',n_3', 0))$ given by $\overline{u}_1 \mapsto \epsilon \overline{u}_1 + \lambda \overline{u}_2$ and $\overline{u}_2\mapsto \overline{u}_2$ is easily seen to extend to a characteristic class preserving isomorphism, proving exception (1).

The isomorphism from $H^\ast(A(2,2,1))$ to $H^\ast(A(0,0,1))$ is obvious as is the one from $H^\ast(A(1,1,2))$ to $H^\ast(A(0,0,2))$.  The isomorphism from $H^\ast(A(0,0,1))$ to $H^\ast(A(0,2,1))$ with $\overline{u}_1 \mapsto \overline{u}_1 + 2\overline{u}_2$ and $\overline{u}_2\mapsto -\overline{u}_2$ is easily seen to map characteristic classes to characteristic classes.  Finally, the map $H^2(A(0,0,2))\rightarrow H^2(A(0,1,2))$ with $\overline{u}_1\mapsto \overline{u}_1 + \overline{u}_2$ and $\overline{u}_2\mapsto -\overline{u}_2$ extends to a characteristic class persevering isomorphism.  This handles exceptions (2) and (3).

We now show these are the only exceptions.  Since we have already shown each of the sporadic examples is diffeomorphic to either $A(0,0,1)$ or $A(0,0,2)$, we may focus on the families. That is, we show first that $H^\ast(A(n_1, n_2,0))$ is not isomorphic to $H^\ast(A(0,0,k_2))$ except when $k_2 = 0$, second, that $H^\ast(A(n_1,n_2,0))\not\cong H^\ast(A(n_1',n_2', 0))$ when $n_1 + n_2 \neq \pm (n_1' +n_2') \pmod{3}$, and finally, that $H^\ast(A(0,0,k_2)) \not\cong H^\ast(A(0,0,k_2'))$  if $|k_2| \neq |k_2'|$.  

In the rings $H^\ast(A(n_1,n_2,0))$, there is a nontrivial element which squares to $0$, $\overline{u}_2^2 = 0$.  However, $H^\ast(A(0,0,k_2))$ with $k_2\neq 0$ does not share this property.  For, if $(a\overline{u}_1 + b\overline{u}_2)^2 = 0$, then, because there are no relations on $\overline{u}_1^2$ in Table \ref{table:s2cp2top}, $a = 0$.  Then $(b\overline{u}_2)^2 = -b^2 k_2\overline{u}_1 \overline{u}_2  = 0$ iff $b = 0$ or $k_2 = 0$.

We now focus on the $A(n_1,n_2,0)$ family.  Due to exception (1), if $n_1+n_2 = 0 \pmod{3}$, then $A(n_1,n_2,0)\cong A(0,0,0)$, and if $n_1 + n_2\neq 0 \pmod{3}$, then $A(n_1,n_2,0)\cong A(1,0,0)$.  So, we need only show $H^\ast(A(1,0,0))$ is not isomorphic to $H^\ast(A(0,0,0))$.  First notice that in both rings, the only degree two elements which square to $0$ are multiples of $\overline{u}_2$.  For $A(1,0,0)$, the equation $0 = (a\overline{u}_1 + b\overline{u}_2)^3 = a^2(3b-a)\overline{u}_1^2 \overline{u}_2$ is solved only when $a=0$ or $a = 3b$.  Thus, multiples of $3\overline{u}_1 + \overline{u}_2$ are characterized by cubing to $0$ but not squaring to $0$.  For $A(0,0,0)$, degree two elements whose square is nonzero but whose cube is $0$ are all multiples of $\overline{u}_1$.  It follows that any isomorphism $H^\ast(A(0,0,0))\rightarrow H^\ast(A(1,0,0))$ must map $\overline{u}_2$ to $\pm \overline{u}_2$ and $\overline{u}_1$ to $\pm(3\overline{u}_1 + \overline{u}_2)$.  But then $\overline{u}_1$ is clearly not in the image of this map.

\

To finish the proof, we need only distinguish the rings of the form $H^\ast(A(0,0,k_2))$.  We first note that the only nontrivial elements of $H^2(A(0,0,k_2))$ which cube to $0$ are multiples of $\overline{u}_1$.  To see this, we compute using the relations $\overline{u}_1^3 = 0$ and $\overline{u}_2^2 = -k_2 \overline{u}_1 \overline{u}_2$ that $(a\overline{u}_1 + b\overline{u}_2)^3 = b(3a^2 - 3abk_2 + b^2 k_2^2) \overline{u}_1^2\overline{u}_2$.  If $b\neq 0$, then this vanishes iff $3a^2 - 3abk_2 + b^2 k_2^2 = 0$.  But the discriminant of this polynomial is $-3b^2k_2^2 < 0$, so there is no solution with $a$ real and $b\neq 0$.

It follows that any  isomorphism $\phi:H^\ast(A(0,0,k_2))\rightarrow H^\ast(A(0,0,k_2'))$ must map $\overline{u}_1$ to $\pm \overline{u}_1$.  On $H^2$, $\phi$, which can be described by an element of $Gl(2,\mathbb{Z})$, is thus given by $\phi(\overline{u}_1) = \epsilon \overline{u}_1$, $\phi(\overline{u}_2) = \lambda \overline{u}_1 + \delta \overline{u}_2$ with $\epsilon,\delta \in \{\pm 1\}$.  In order for $\phi$ to be well defined, we must have $\phi(\overline{u}_2)^2 = k_2' \phi(\overline{u}_1) \phi(\overline{u}_2)$.  Expanding this and the equating the $\overline{u}_1^2$ and $\overline{u}_1\overline{u}_2$ components, one gets the pair of equations $$ \lambda^2 = \epsilon k_2' \lambda \text{ \ and \ } \epsilon\delta k_2' = 2\lambda\delta - k_2.$$ The first equation implies $\lambda = 0$ or $\lambda = \epsilon k_2'$.  Substituting either option into the second equation, one finds $|k_2| = |k_2'|$.

\end{proof}

\subsection{\texorpdfstring{Biquotients with $\pi_\ast(G\bq H)_\Q\cong \pi_\ast((S^2)^3)_\Q$}{Rational homotopy groups like S2 x S2 x S2 }}
\label{analysiss23}

In this subsection, $G$ will denote $SU(2)^3$ and $H$ will denote $T^3$.  Equipping $SU(2)$ with its bi-invariant metric, the $H$ action is by isometries and hence linear.  It turns out that, just as in the previous section, classifying effectively free actions is easier done by directly studying linear actions on spheres.  As in the previous section, we interpret $S^1 = \{z\in\mathbb{C}: |z| = 1\}$ and $S^3 = \{(a_1,a_2)\in \mathbb{C}^2: |a_1|^2 + |a_2|^2 = 1\}$.

We note that the biquotient action of $T^3$ on $SU(2)$ given by $ (z_1, z_2, z_3)\ast A =$ $$\begin{bmatrix}z_1^{m_1} z_2^{m_2} z_3^{m_3} & \\ &  \overline{z}_1^{m_1} \overline{z}_2^{m_2}\overline{z}_3^{m_3}\end{bmatrix} \cdot A \cdot \begin{bmatrix}z_1^{n_1} z_2^{n_2} z_3^{n_3} & \\ &  \overline{z}_1^{n_1} \overline{z}_2^{n_2} \overline{z}_3^{n_3}\end{bmatrix}^{-1}$$ is equivalent to the action of $T^3$ on $S^3$ given by $$(z_1, z_2, z_3)\ast(a_1,a_2) = (z_1^{m_1-n_1}z_2^{m_2-n_2}z_3^{m_3-n_3} a_1, z_1^{m_1+n_1}z_2^{m_2 + n_2} z_3^{m_3+n_3} a_2)$$ under the isomorphism $SU(2)\rightarrow S^3$ mapping $\begin{bmatrix} a_1 & a_2\\ -\overline{a}_2 & \overline{a}_1\end{bmatrix}$ to $(a_1,a_2)$.

We now focus on linear $T^3$ actions on $(S^3)^3$.

\begin{proposition}\label{t3act}  Suppose $T^3$ acts on $(S^3)^3$ freely.  Then, up to the modifications in Proposition \ref{diffeostabilize}, the action has the form $$(z_1,z_2,z_3)\ast \big((a_1,a_2), (b_1,b_2), (c_1,c_2)\big) =$$ $$ \big( (z_1 a_1, z_1 z_2^{k_2} z_3^{k_3} a_2), (z_2 b_1, z_1^{l_1} z_2 z_3^{l_3} b_2), (z_3 c_1, z_1^{m_1} z_2^{m_2} z_3 c_2)\big).$$  Further, if these exponents are put into a matrix as $A=\begin{bmatrix} 1 & k_2 & k_3\\ l_1 & 1 & l_3\\ m_1 & m_2 & 1\end{bmatrix}$, then $\det(A) = \pm 1$ and each of the three $2\times 2$ diagonal cofactors also has determinant $\pm 1$.  Conversely, every such $3\times 3$ matrix defines a free action of $T^3$ on $(S^3)^3$.

\end{proposition}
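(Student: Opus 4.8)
The plan is to forget the homomorphism $f$ and work with the orbit-equivalent linear action of $T^3$ on $(S^3)^3$ furnished by the two displayed correspondences above; once that linear action is normalized, $G\bq H$ is identified with its quotient. Write the action on the $i$-th sphere $S^3=\{(p_i,q_i)\}$ as $(z_1,z_2,z_3)\ast(p_i,q_i)=(z^{\alpha^{(i)}}p_i,\, z^{\beta^{(i)}}q_i)$, where $\alpha^{(i)},\beta^{(i)}\in\Z^3$ are weight vectors and $z^{w}=z_1^{w_1}z_2^{w_2}z_3^{w_3}$; the datum of the action is the pair of $3\times 3$ integer matrices whose rows are the $\alpha^{(i)}$ and the $\beta^{(i)}$.

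Step one is a determinant reformulation of freeness. At a point of $(S^3)^3$ the isotropy subgroup is the intersection over $i$ of: $\ker\alpha^{(i)}$ if $q_i=0$; $\ker\beta^{(i)}$ if $p_i=0$; and $\ker\alpha^{(i)}\cap\ker\beta^{(i)}$ if $p_i\neq 0$ and $q_i\neq 0$, where $\ker w\subseteq T^3$ is the kernel of the character $w$. Adding constraints only shrinks the intersection, and points with $q_i=0$ (resp. $p_i=0$) always exist, so the action is free if and only if, for each of the $2^3$ ways of choosing for every $i$ either $\alpha^{(i)}$ or $\beta^{(i)}$, the intersection of the kernels of the three chosen characters is trivial. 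Identifying $T^3$ with $\R^3/\Z^3$, the intersection of the kernels of characters with matrix $W$ (rows equal to the chosen vectors) is $W^{-1}\Z^3/\Z^3$ when $\det W\neq 0$, hence is trivial precisely when $W\in GL_3(\Z)$, that is $\det W=\pm 1$ (and is a positive-dimensional subtorus when $\det W=0$). Conclusion: the action is free iff all eight matrices obtained by taking row $i$ to be $\alpha^{(i)}$ or $\beta^{(i)}$ have determinant $\pm 1$.

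Step two is the normalization. The choice ``all $\alpha$'' shows $\{\alpha^{(1)},\alpha^{(2)},\alpha^{(3)}\}$ is a $\Z$-basis of $\Z^3$; precomposing with the automorphism of $T^3$ carrying this basis to the standard one (the first modification of Proposition \ref{diffeostabilize}) we arrange $\alpha^{(i)}=e_i$. Let $A$ be the resulting matrix of $q$-weights. Among the remaining seven determinant conditions, the three choices with a single $\beta$ say $A_{ii}=\pm 1$, the three with two $\beta$'s say each of the three $2\times 2$ diagonal cofactors of $A$ is $\pm 1$, and ``all $\beta$'' says $\det A=\pm 1$. Finally, applying on the $i$-th factor the diffeomorphism $(p_i,q_i)\mapsto(p_i,\overline{q_i})$ — which fixes $\alpha^{(i)}=e_i$, replaces $\beta^{(i)}$ by $-\beta^{(i)}$, and does not change the quotient — I may assume each $A_{ii}=1$. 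This is exactly the asserted form, and the surviving conditions are $\det A=\pm 1$ together with the three diagonal cofactors being $\pm 1$. For the converse, a matrix $A$ of this shape makes the eight determinants of step one equal to $1$ (the all-$\alpha$ matrix), the three $A_{ii}=1$, the three cofactors, and $\det A$, all of which are $\pm 1$ by hypothesis, so step one yields freeness.

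The part needing the most care is step one: verifying that the point stabilizers of a linear torus action on $(S^3)^3$ take exactly the three listed forms, so that the eight ``one constraint per factor'' determinants genuinely detect freeness. I would also stay attentive to which normalizing moves are honest modifications from Proposition \ref{diffeostabilize} (the torus automorphism; the sign changes $\alpha^{(i)}\mapsto-\alpha^{(i)}$, $\beta^{(i)}\mapsto-\beta^{(i)}$ coming from Weyl-group conjugation in $G\times G$; permutations of the factors) versus the elementary diffeomorphisms of $(S^3)^3$ such as conjugating a single coordinate, which are admissible here only because the conclusion concerns the diffeomorphism type of the quotient. Once the determinant reformulation is in place, everything else is routine integral linear algebra.
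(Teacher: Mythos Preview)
Your proof is correct and follows the same overall strategy as the paper: normalize the $p$-weights to the standard basis via an automorphism of $T^3$, then read off the determinant conditions for freeness. The chief difference is that the paper simply cites Totaro for the equivalence between freeness of the normalized action and the determinant/cofactor conditions on $A$, whereas you derive the full eight-determinant criterion directly from the stabilizer computation at the $2^3$ coordinate points and then show how, after normalization, these eight conditions collapse to $\det A=\pm 1$, the three diagonal cofactors $=\pm1$, and $A_{ii}=\pm1$. Your argument is thus more self-contained and makes explicit why exactly those minors appear.

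You are also right to flag the final sign normalization $A_{ii}=-1\to A_{ii}=1$ via $(p_i,q_i)\mapsto(p_i,\overline{q_i})$: this is an orientation-reversing diffeomorphism of the $i$-th $S^3$ and is not one of the moves in Proposition~\ref{diffeostabilize} (on a single factor those moves act on $(\alpha^{(i)},\beta^{(i)})$ only by the Klein four-group $\{(\alpha,\beta),(\beta,\alpha),(-\beta,-\alpha),(-\alpha,-\beta)\}$, which never sends $(\alpha,\beta)$ to $(\alpha,-\beta)$). The paper's own proof stops at ``each of the powers of $z_i$ on the $q_i$ coordinate must be $\pm1$'' and does not address this step either, so your caveat is well placed rather than a defect of your argument.
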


\begin{proof}
Totaro \cite{To2} has proven that a biquotient action in the form of the proposition is free iff each of the listed determinant conditions holds, so we need only show that we can modify any action to have that form.

Up to equivalence, a general linear $H$ action on $(S^3)^3$ takes the form $(z_1,z_2, z_3)\ast\big( (a_1,a_2), (b_1, b_2), (c_1, c_2)\big) = $ $$\big( (z_1^{\alpha_1} z_2^{\beta_1} z_3^{\gamma_1} a_1, z_1^{k_1} z_2^{k_2} z_3^{k_3} a_2), (z_1^{\alpha_2} z_2^{\beta_2} z_3^{\gamma_2} b_1, z_1^{l_1} z_2^{l_2} z_3^{l_3} b_2), (z_1^{\alpha_3} z_2^{\beta_3} z_3^{\gamma_3} c_1 , z_1^{m_1} z_2^{m_2} z_3^{m_3} c_2)\big).$$ 

Let $X =\begin{bmatrix} \alpha_1 & \alpha_2 & \alpha_3\\ \beta_1 & \beta_2 & \beta_3\\ \gamma_1 & \gamma_2 & \gamma_3\end{bmatrix}.$  Since the action is effective, $\det(X)\neq 0$.  If $\det(X) \neq \pm 1$, then there is a non-integral, rational vector $r = (r_1, r_2, r_3)^t$ for which $Xr$ is integral.  Then $(e^{2\pi i r_1 }, e^{2\pi i r_2 }, e^{2\pi i r_3 })\in H$ fixes $(1,0)^3 \in (S^3)^3$, contradicting freeness of the action, so $\det(X) = \pm 1$.

It follows that $X^{-1}$ is integral.  By precomposing $f$ with the isomorphism $X^-1:T^3\rightarrow T^3$, which is nothing but a reparametrization, the action now has the desired form on the $a_1, b_1,$ and $c_1$ coordinates.  That is, we may assume $X$ is the identity matrix.  Thus, we only need to show that the power of $z_i$ on the coordinates $a_2$, $b_2$ and $c_2$ is $1$.

But, if $z_1$ is a $k_1$-th root of $1$, then the point $(z_1,1,1)\in T^3$ fixes the point $\big( (0,1) , (1,0), (1,0)\big)\in (S^3)^3$.  Since the action is free, this implies $z_1 = 1$.  That is, every $k_1$-th root of $1$ is $1$.  Thus, $k_1 = \pm 1$ and, by simultaneously precomposing by the automorphism of $T^3$ $z_1\mapsto \overline{z}_1$ and composing with the diffeomorphism of $S^3$ given by $(a_1,a_2)\rightarrow (\overline{a}_1, a_2)$, we may assume $k_1 = 1$.  Likewise, we see $l_2 = m_3 = 1$.

\end{proof}

We now need to classify all $3\times 3$ matrices of the form $A = \begin{bmatrix} 1 & k_2 & k_3\\ l_1 & 1 & l_3\\m_1 & m_2 & 1\end{bmatrix}$ with $\det(A) = \pm 1$ and all diagonal cofactors equal to $\pm 1$.

\begin{proposition}\label{matlist}  Suppose $T^3$ acts freely on $(S^3)^3$ with characteristic matrix $A = \begin{bmatrix} 1& k_2 & k_3 \\ l_1 & 1 & l_3\\ m_1 & m_2 & 1\end{bmatrix}$ which has determinant $\pm 1$ and with diagonal cofactors each having determinant $\pm 1$.  Then, up to equivalence, $A$ belongs to one of the three infinite families $$R(m_1,m_2) = \begin{bmatrix} 1 & 2 & 0 \\ 1 & 1 & 0 \\ m_1 & m_2 & 1\end{bmatrix},\, S(k_3,l_3) = \begin{bmatrix} 1 & 2 & k_3 \\ 1 & 1 & l_3 \\ 0 & 0 & 1 \end{bmatrix},\text{ and } T(l_1,m_1,m_2)= \begin{bmatrix} 1 & 0 & 0 \\ l_1 & 1 & 0 \\ m_1 & m_2 & 1\end{bmatrix}$$ or $A$ is one of four other sporadic examples: $$A_1 = \begin{bmatrix}1 & 2 & 2 \\ 1 & 1 & 2\\ 1 & 1 & 1\end{bmatrix}, A_2 = \begin{bmatrix}1 & 2 & 0 \\ 1 & 1 & 2\\ 1 & 1 & 1 \end{bmatrix}, A_3 = \begin{bmatrix}1 & 2 & 0 \\ 1 & 1 & 1 \\ 2 & 2 & 1\end{bmatrix}, \text{ and } A_4 = \begin{bmatrix}1 & 2 & 2 \\ 1 & 1 & 2 \\ 1 & 0 & 1\end{bmatrix}.$$

\end{proposition}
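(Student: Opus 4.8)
The plan is to turn the two hypotheses into arithmetic conditions on the six off‑diagonal entries and then run a finite case analysis organized by the ``profile'' of the matrix. Set $P_1=a_2b_1$, $P_2=a_3c_1$, $P_3=b_3c_2$ and $T_1=a_2b_3c_1$, $T_2=a_3b_1c_2$. Expanding $\det A$ along the first row gives $\det A = 1-(P_1+P_2+P_3)+(T_1+T_2)$, so the three diagonal‑cofactor conditions say exactly that each $P_i\in\{0,2\}$, while $\det A=\pm1$ becomes $T_1+T_2-(P_1+P_2+P_3)\in\{0,-2\}$. First I would record how the moves of Proposition \ref{diffeostabilize} act on the entries: a sign change on the $i$th factor multiplies each off‑diagonal entry $A_{jk}$ by $\epsilon_j\epsilon_k$ with $\epsilon_i=-1$ and the other $\epsilon$'s $=1$, hence it fixes every $P_i$, fixes $T_1,T_2$, and fixes $\det A$; a simultaneous swap of rows and columns $i,j$ transposes the off‑diagonal entries accordingly and induces the corresponding transposition of $(P_1,P_2,P_3)$; and the ``reparametrization'' substitution displayed just before the proposition (together with its two analogues using the $q$‑coordinates on the second or third $S^3$) acts by the indicated quadratic formula, and in general does \emph{not} preserve the $P_i$.

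The next step is normalization. If $P_i=2$ then $\{A_{jk},A_{kj}\}$ (for the appropriate $j,k$) is one of $(1,2),(2,1),(-1,-2),(-2,-1)$; using the $j\!\leftrightarrow\!k$ swap to interchange the two entries and then a sign change to absorb the overall sign, we may assume the row entry is $2$ and the column entry is $1$ --- this is exactly the reduction $(a_2,b_1)=(2,1)$ in the paragraph preceding the proposition. If $P_i=0$ then at least one of $A_{jk},A_{kj}$ vanishes, and a swap may be used to put that zero above the diagonal. After applying the available swaps to sort $(P_1,P_2,P_3)$, only the four profiles $(0,0,0)$, $(2,0,0)$, $(2,2,0)$, $(2,2,2)$ (up to reordering) remain, with every $2$‑pair normalized as above and each $0$‑pair containing a prescribed zero; the only freedom still available is the reparametrization moves, the sign change on a factor not involved in any $2$‑pair, and the determinant constraint.

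Then I would dispatch the four cases. For profile $(0,0,0)$ the nonzero off‑diagonal entries form a loopless digraph on $\{1,2,3\}$; if it is acyclic, a permutation of the factors makes $A$ lower triangular, giving the family $\begin{bmatrix}1&0&0\\ b_1&1&0\\ c_1&c_2&1\end{bmatrix}$ with $\det A=1$ automatic, while if the digraph is a $3$‑cycle then one of $T_1,T_2$ is nonzero and the determinant equation forces it to be $-2$, after which a single reparametrization move produces a matrix with some $P_i\neq0$, folding the subcase into a later profile. For profile $(2,0,0)$, after normalizing the $2$‑pair to $\begin{bmatrix}1&2\\1&1\end{bmatrix}$ there are four subcases according to which entry of each $0$‑pair vanishes; the determinant constraint together with the residual sign change and reparametrization moves collapses these to the two infinite families $\begin{bmatrix}1&2&0\\1&1&0\\ c_1&c_2&1\end{bmatrix}$ and $\begin{bmatrix}1&2&a_3\\1&1&b_3\\0&0&1\end{bmatrix}$, plus finitely many matrices that fold into profile $(2,2,0)$. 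For profiles $(2,2,0)$ and $(2,2,2)$ the normalizations and $\det A=\pm1$ leave no free parameters, so each is a short finite enumeration; using the remaining sign change and reparametrization moves one is left with the three sporadic matrices with a zero entry and with $\begin{bmatrix}1&2&2\\1&1&2\\1&1&1\end{bmatrix}$, respectively. Finally one checks directly that each of the seven listed matrices satisfies the cofactor and determinant conditions, so the list exactly describes the biquotients $(S^3)^3\bq T^3$.

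The only real difficulty is the bookkeeping caused by the reparametrization moves changing the profile: one must check that the reductions between profiles terminate (e.g. by exhibiting a complexity measure on the entries that strictly decreases under the needed substitutions) and that no admissible matrix is discarded when passing from one profile to another. I expect profile $(2,0,0)$ --- where two free entries, the determinant equation, and the reparametrization moves all interact --- to require the most care, while the $(0,0,0)$, $(2,2,0)$ and $(2,2,2)$ cases reduce to straightforward finite checks.
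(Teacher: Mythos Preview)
Your proposal is correct and follows essentially the same approach as the paper: normalize using the three equivalence moves listed just before the proposition and then carry out a finite case analysis, which the paper itself dispatches in one sentence (``a case by case analysis easily demonstrates the following proposition''). Your organization by the profile $(P_1,P_2,P_3)\in\{0,2\}^3$ is simply a more systematic way of structuring exactly the case split the paper initiates by first normalizing the pair $(a_2,b_1)$ to $(2,1)$ or $(0,b_1)$.
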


Biquotients in the $R$, $S$, and $T$ family are are all decomposable:  for the $R$ family we see the $T^2_{z_1,z_2} \subseteq T^3 = H$ which given by the $z_1$ and $z_2$ coordinates acts freely on the first two factors of $G$ with $SU(2)^2\bq T^2\cong \mathbb{C}P^2 \# \mathbb{C}P^2$ \cite{To1,DeV1}.  The $S^1_{z_3}\subseteq T^3$ given by the $z_3$ coordinate acts only on the last factor of $G$ freely, with quotient $S^2$.  Further, the $T^2_{z_1,z_2}$ action on the last factor of $G$ centralizes the $S^1_{z_3}$ action.  So, each biquotient in the $R$ family is decomposable, naturally begin $S^2$ bundles over $\mathbb{C}P^2\#\mathbb{C}P^2$.  In a similar fashion, for the $S$ family, the $S^1_{z_3}$ action normalizes the $T^2_{z_1,z_2}$ action, so each biquotient in the $S$ family is decomposable, being the total space of a $\mathbb{C}P^2 \# \mathbb{C}P^2$ bundle over $S^2$.

Every biquotient in the $T$ family is decomposable in two ways.  First, it is the total space of an $S^2$ bundle over $S^2\times S^2$ if $l_1$ is even and an $S^2$ bundle over $\mathbb{C}P^2\#-\mathbb{C}P^2$ if $l_1$ is odd.  To see this, note the $T^2_{z_1, z_2}$ action on the first two factors is free with quotient either $S^2\times S^2$ or $\mathbb{C}P^2\#-\mathbb{C}P^2$, depending on the parity of $l_1$ \cite{DeV1}.  The $S^1_{z_3}$ action on the last factor of $G$ is free with quotient $S^2$, and the $T^2_{z_1, z_2}$ action on the last factor of $G$ centralizes this action.  Second, each biquotient in the $T$ family is the total space of a bundle over $S^2$ with fiber either $S^2\times S^2$ if $m_2$ is even and with fiber $\mathbb{C}P^2\#-\mathbb{C}P^2$ if $m_2$ is odd.  To see this, notice the $T^2_{z_2,z_3}$ action on the second two factors of $G$ is free with quotient $S^2\times S^2$ or $\mathbb{C}P^2\#-\mathbb{C}P^2$, depending on the parity of $m_2$ \cite{DeV1}.  The $S^1_{z_1}$ action on the first factor of $G$ is free with quotient $S^2$ and the $S^1_{z_1}$ action on the second two factors of $G$ centralizes the $T^2_{z_2,z_3}$ action.

Finally, we point out that the four sporadic examples are not naturally decomposable.  The $T$ family of biquotients have been previously studied by Totaro \cite{To2}, who showed that among these examples one can find infinitely many non-isomorphic rational cohomology rings.

\begin{proof}(Proof of Proposition \ref{matlist})

The outer automorphisms given by simultaneously swapping factors of $G$ and $H$ corresponds, in the matrix description, to simultaneously swapping a pair of rows and the same pair of columns, that is, to conjugating by a transposition matrix.  The inner automorphism given by simultaneously mapping $(a_1,a_2)\mapsto (\overline{a}_1, \overline{a}_2)$ and $z_1\mapsto \overline{z}_1$ correspond to conjugation of $A$ by the diagonal matrix with a $-1$ in the first entry, and $1$s along the rest of the diagonal; one has a similar result for the other factors of $G$ and  $H$.

Since the determinant of the top left $2\times 2$ block is $\pm 1$, using these equivalences,  we see that either $k_2 = 0$ or $(l_1,k_2) = (1,2)$.

When $(l_1,k_2) = (1,2)$ with all other entries at most $2$ in absolute value, one has a relatively small list of options.  It is easy to see that, after taking equivalences into account, and after removing those examples in the $S$ family, that is, with $m_1 = m_2= 0$ one obtains $A_1, A_2, A_3,$ and $A_4$ as well as seven extra examples:

$$\begin{bmatrix}1 & 2 & 1 \\ 1 & 1 & 0\\ 0 & 2 & 1\end{bmatrix}, \begin{bmatrix}1 & 2 & 1 \\ 1 & 1 & 0\\ 2 & 2 & 1\end{bmatrix}, \begin{bmatrix} 1 & 2 & 2 \\ 1 & 1 & 0\\ 0 & 1 & 1 \end{bmatrix}, \begin{bmatrix} 1 & 2 & 2 \\ 1 & 1 & 0\\ 1 & 1 & 1\end{bmatrix}, \begin{bmatrix} 1 & 2 & 0 \\ 1 & 1 & 1 \\ 1 & 0 & 1\end{bmatrix}, \begin{bmatrix} 1 & 2 & 0 \\ 1 & 1 & 1 \\ 1 & 2 & 1\end{bmatrix}, \begin{bmatrix} 1 & 2 & 2\\ 1 & 1 & 2\\ 0 & 1 & 1\end{bmatrix}.$$

If one interchanges $a_1$ and $a_2$, and then reparamaterizes the action to have the form of Proposition \ref{t3act}, then this has the effect of changing the matrix $$\begin{bmatrix} 1 & k_2 & k_3\\l_1 & 1 & l_3\\ m_1 & m_2 & 1\end{bmatrix} \text{ to } \begin{bmatrix}1 & -k_2 (1-k_2 l_1) & -k_3(1-k_3 m_1)\\ l_1 & 1 & (l_3-k_3 l_1)(1-k_3 m_1)\\ m_1 & (m_2-k_2 m_1)(1-k_2 l_1) & 1\end{bmatrix}.(\ast)$$  Applying $(\ast)$ to the first extra matrix $\begin{bmatrix} 1 & 2 & 1 \\ 1 & 1 & 0\\ 0 & 2  & 1\end{bmatrix}$ and then negating the third row and column, we obtain the new matrix $\begin{bmatrix} 1 & 2 & 1\\ 1 & 1 & 1 \\ 0 & 2 & 1\end{bmatrix}$, which is equivalent to the 6th extra matrix, so these two matrices describe equivalent actions.  Further, if we now interchange the first two rows and columns of the new matrix and the apply $(\ast)$, we see that this action is equivalent to the action determined by $A_1$.

Similarly, one can show the remaining 5 extra matrices all describe actions equivalent to those of $A_i$.  This concludes the case where $k_2 l_1 = 2$ with all entries at most $2$ in absolute value.

\

Now, assume $k_2 = 0$ and $l_1$ is arbitrary.  If $m_2 = 0$ or if $k_3 = 0$, then, after interchanging rows and columns, it is easy to see that this just gives matrices in the $R$, $S$, or $T$ family.  So, we assume $m_2$ and $k_3$ are both non-zero, and, without loss of generality, that $k_3 > 0$.

Since $k_3 > 0$, the equation $|1-k_3 m_1| = 1$ forces either $m_1 = 0$ or $(k_3,m_1) = (1,2)$ or $(2,1)$.  If $m_1 = 0$, then up to equivalence, one obtains the matrix $\begin{bmatrix} 1 & 0 & 1\\ 2 & 1 & 0 \\ 0 & -1 &1\end{bmatrix}$.  Upon applying $(\ast)$, multiplying the third row and column by $-1$, interchanging first and second row and colulmn, and then interchanging the second and third row and column, one gets $\begin{bmatrix}1 & 2 & 2 \\ 1 & 1 & 0\\ 0 & 1 & 1\end{bmatrix}$, one of the extra examples above.

Hence, we assume $m_1 \neq 0$, so $(k_3,m_1)$ is, up to order, $(1,2)$.  If $l_3 = 0$, then since $\det(A) = \pm 1$, we must have $l_1 = 0$, giving the $S$ family, or $(k_3, l_1, m_2) =(2,1,1)$ up to order, giving the matrix $\begin{bmatrix} 1 & 0 & 2 \\ 1 & 1 & 0 \\ 1 & 1 & 1\end{bmatrix}$.  Interchanging appropriate rows and columns, this matrix has $k_2 l_1 = 2$ with all other entries at most $2$ in abolute value, so is equivalent to an $A_i$ or $S$ example.

Finally, assume $l_3\neq 0$.  The equation $\det(A) = \pm 1$ then implies that $(k_3,l_1,m_2)$ is, up to order, either $(2,1,1)$, $(2,2,1)$, or $(4,1,1)$, where $l_1 = 4$ in the last case.  In the first two cases, one can interchange appropriate rows and columns to obtain a matrix with $k_2l_1 = 2$ and all entries bounded by $2$ in absolute value, so these matrices are equivalent to one defining an $A_i$ or $S$ biquotient.  When $l_1 = 4$, we obtain the matrix $\begin{bmatrix} 1 & 0 & 1 \\ 4 & 1 & 2\\ 2 & 1 & 1\end{bmatrix}$.  Interchanging the first and third rows and columns, and then applying $(\ast)$, we obtain $\begin{bmatrix} 1 & 1 & 2\\ 2 & 1 & 0 \\ 1 & 1 & 1\end{bmatrix}$, which has $k_2 l_1 = 2$ with all entries boudned by $2$ in absolute value, so is equivalent to an $A_i$ or $S$ example.

\end{proof}

We now prove the analogue of Proposition \ref{s2buncp2}, using an approach suggested by an anonymous referee.

\begin{proposition}\label{bundoverb4}  Suppose $E$ is the total space of a bundle $S^2\rightarrow E\rightarrow B$ with $B = S^2\times S^2$ or $B = \mathbb{C}P^2\# \pm \mathbb{C}P^2$ where the structure group reduces to $S^1$ acting linearly on $S^2$.  Then $E$ is diffeomorphic to a biquotient.

\end{proposition}

\begin{proof}
According to \cite{To1}, each possibility for $B$ is diffeomorphic to a biquotient of the form $(SU(2))^2\bq T^2$.  If $P\rightarrow B$ is a principal $S^1$ bundle with Euler class an indivisible element of $H^2(B)\cong\mathbb{Z}^2$, then it is clear that $P$ is of the form $P = (SU(2))^2\bq S^1$ for an appropriate $S^1\subseteq T^2$.  If the Euler class is divisible, then $P$ has the form $(SU(2)^2) \times_{S^1} S^1$ where the circle action on $S^1$ is given by $z\ast w = z^k w$, and is, in particular, a biquotient.

Now, if $E\rightarrow B$ is an $S^2$-bundle with structure group reducing to $S^1$ acting linearly, then $E$ is of the form $P\times_{S^1} S^2$.  This action is clearly a biquotient action.
\end{proof}

\

We now state the diffeomorphism and homotopy equivalence classification of biquotients of the form $(SU(2))^3\bq T^3$.

\begin{proposition}\label{s3modt3}

Other than the five exceptions listed below, two biquotients of the form $SU(2)^3\bq T^3$ which are defined by distinct effectively free actions are not homotopy equivalent.

(1)  $A_2$ is diffeomorphic to $R(0,1)$ and $A_4$ is diffeomorphic to $R(0,2)$.

(2)  $R(m_1,m_2)$ is diffeomorphic to $R(m_1', m_2')$ if $m_1^2 + (m_2-m_1)^2 = m_1'^2 + (m_2'-m_1')^2$ and $m_i = m_i' \pmod{2}$.

(3)  $S(k_3,l_3)$ is diffeomorphic to $S(k_3', l_3')$ if $$(k_3', l_3') \in \{\pm(k_3 ,l_3 ),\pm(k_3 - 2l_3 ,-l_3 ),\pm(-k_3 ,l_3 - k_3 ),\pm (k_3 - 2l_3 ,k_3 - l_3 )\}.$$

(4)  If $m_2(2m_1 - m_2 l_1) = m_2' (2m_1' - m_2'l_1')=0$, then $T(l_1, m_1, m_2)$ is diffeomorphic to $T(l_1', m_1',m_2')$ if $l_1 = l_1' \pmod{2}$ and $m_2 = m_2' \pmod{2}$ or if $l_1= m_2 + 1\pmod{2}$ and $l_1' = m_2' + 1\pmod{2}.$

(5)  $T(l_1, m_1,m_2)$ is diffeomorphic to $T(l_1',m_1',m_2')$ if $l_1 = l_1'\pmod{2}$ and $m_2(m_1 - \lfloor \frac{l_1}{2}\rfloor m_2) = m_2'(m_1' - \lfloor \frac{l_2}{2} \rfloor m_2')$.

\end{proposition}

In (5), the notation $\lfloor \cdot \rfloor$ denotes the greatest integer function.

We will only prove this theorem for a subset these biquotients, the rest of the arguments being similar.  More specifically, we will classify the diffeomorphism types of the four sporadic example as well as the $R$ family.  We will also show that no biquotient belonging to a family is homotopy equivalent to a biquotient in another family, except in the obvious case $R(0,0) \cong S(0,0)\cong S^2\times (\mathbb{C}P^2\#\mathbb{C}P^2)$.

The method of proof closely mirrors that of Section \ref{analysiss2cp2}.  We first compute the cohomology rings and characteristic classes of these examples; we will see that the cohomology groups are isomorphic to those of $(S^2)^3$, so are torsion free.  In particular, we may then apply Theorem \ref{6dimclass} of Jupp, Wall, and Zubr to see that two such biquotients are diffeomorphic iff there is an isomorphism of their cohomology rings which maps characteristic classes to characteristic classes.

\begin{proposition}\label{s3t3data}  The cohomology ring of the biquotient $G\bq H$ associated to the integers $(k_2,k_3, l_1,l_3, m_1,m_2)$ is given by $\Z[u_1,u_2,u_3]/I$ where $|u_i|=2$ and $I$ is the ideal generated by $u_1(u_1+k_2 u_2 + k_3 u_3 )$, $u_2(l_1 u_1 + u_2 + l_3 u_3)$, and $u_3(m_1 u_1 + m_2 u_2 + u_3)$.  The first Pontryagin class $p_1(G\bq H)$ is $(k_2 u_2 + k_3 u_3)^2 + (l_1 u_1 + l_3 u_3)^2 + (m_1 u_1 + m_2 u_2)^2$.  Further, $H^\ast(G\bq H; \Z_2) = \Z_2[u_1,u_2,u_3]/I$ and the Stiefel-Whitney classes are given by $w_2 = (l_1 + m_1)u_1 +(k_2 + m_2)u_2 + (k_3+l_3)u_3$ and $w_4 = (k_2 u_2 + k_3 u_3)(l_1 u_1 + l_3 u_3) +(k_2 u_2 + k_3 u_3)(m_1 u_1 + m_2 u_2) + (l_1 u_1 + l_3 u_3)(m_1 u_1 + m_2 u_2 )$.

\end{proposition}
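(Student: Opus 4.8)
The plan is to deduce everything from Eschenburg's and Singhof's formulas applied to the \emph{free} action of $H=T^3$ on $G'=U(2)^3$ determined by the map $g=(g_1,g_2)$ displayed above, and then to descend along the commutative ladder of fibrations relating $G\bq H$ to $G'\bq H$. First I would run the spectral sequence of $G'\to G'\bq H\to BH$. Write $H^*(U(2)^3;R)=\Lambda_R(s_1,s_2,s_3)\otimes\Lambda_R(t_1,t_2,t_3)$ with $|s_i|=1$, $|t_i|=3$, where $s_i,t_i$ transgress in the reference fibration to the first and second Chern classes of the $i$-th $U(2)$ (Proposition \ref{differentials}). Pulling back by $Bg=Bf$ (Proposition \ref{cohom}) and passing to maximal tori, one reads $Bg_1^*$ and $Bg_2^*$ on Chern roots directly off the displayed matrices. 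The key observation is that $g_1$ and $g_2$ induce the \emph{same} first Chern class on each factor (their determinants agree, since $f$ landed in $SU(2)^3$), so $ds_i=0$ and the $s_i$ are permanent cycles, while $dt_i$ is the quadric obtained from $Bg_2^*$ of the $i$-th second Chern class, namely $u(u+a_2v+a_3w)$, $v(b_1u+v+b_3w)$, $w(c_1u+c_2v+w)$ after renaming the degree-$2$ generators of $H^*(BH;R)$ as $u,v,w$.

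Next I would invoke the freeness of the action, i.e. the determinant conditions of Proposition \ref{t3act} (which hold with values $\pm1$, hence over every field), to conclude that these three quadrics have only the origin of $\C^3$ as common zero, so they form a regular sequence in $R[u,v,w]$ for $R=\Z$ and $R=\Z_2$. Hence the spectral sequence collapses at the next page, $H^*(G'\bq H;R)\cong\Lambda_R(s_1,s_2,s_3)\otimes R[u,v,w]/I$ with $I$ the complete intersection ideal in the statement, and $R[u,v,w]/I$ is $R$-free with Hilbert series $(1+t)^3$ — so the cohomology is torsion-free with the groups of $(S^2)^3$. Naturality of the spectral sequence under the fiber inclusion $SU(2)^3\hookrightarrow U(2)^3$, which kills $H^1$ and is an isomorphism on $H^3$, then shows $j^*\colon H^*(G'\bq H)\to H^*(G\bq H)$ kills the $s_i$ and is the quotient projection on the polynomial part; since $\phi_H^*=j^*\circ(\phi_H')^*$, the images $u,v,w$ of the generators of $H^*(BH;R)$ generate $H^*(G\bq H;R)$ with exactly the relations $I$, for $R=\Z$ and $R=\Z_2$.

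For the characteristic classes I would first record that $G=SU(2)^3\subseteq G'=U(2)^3$ is $H$-invariant with \emph{trivial} normal bundle: it is the fiber of the fibration $\det\colon G'\bq H\to T^3$ (the $H$-action commutes with $\det$ and is trivial on $T^3$, again because $\det g_1=\det g_2$ on each factor), equivalently its normal bundle is associated via $\mathrm{Ad}$ to $\mathfrak g'/\mathfrak g$, a sum of centers on which $H$ acts trivially. Thus $T(G'\bq H)|_{G\bq H}\cong T(G\bq H)\oplus\underline{\R^3}$, so $p(G\bq H)=j^*p(G'\bq H)$ and $w(G\bq H)=j^*w(G'\bq H)$. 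Now apply Theorems \ref{pclass} and \ref{swclass} to the free action on $G'$: $H=T^3$ contributes a trivial factor (no roots, no nontrivial $2$-roots), and the unique positive root of the $i$-th $U(2)$ contributes $1+(x_i-y_i)^2$, whose $2$-root analogue reduces to $1+c_1^{(i)}\bmod 2$. Using $\phi_{G'}^*(\lambda)=(\phi_H')^*Bg_2^*(\lambda)$ together with the already-computed $Bg_2^*$, these become $1+L_i^2$ (respectively $1+L_i$ mod $2$), where $L_1=a_2v+a_3w$, $L_2=b_1u+b_3w$, $L_3=c_1u+c_2v$ are the off-diagonal parts of the rows of $A$. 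Applying $j^*$ (which just replaces $(\phi_H')^*$ by $\phi_H^*$) gives $p_1=L_1^2+L_2^2+L_3^2$ and $w=\prod_i(1+L_i)$ in $\Z_2[u,v,w]/I$, hence $w_2=L_1+L_2+L_3=(b_1+c_1)u+(a_2+c_2)v+(a_3+b_3)w$ and $w_4=L_1L_2+L_1L_3+L_2L_3$, exactly the claimed formulas.

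The main obstacle is twofold. The collapse of the spectral sequence — and with it both the ring structure and the absence of torsion — rests entirely on translating the freeness hypotheses of Proposition \ref{t3act} into the purely algebraic fact that the three transgressing quadrics form a regular sequence over $\Z$ and over $\Z_2$; this is the heart of the first half. The second delicate point is the transfer of characteristic classes from $G'\bq H$ to the submanifold $G\bq H$: one must genuinely check the normal bundle is trivial, and one must handle the $2$-root bookkeeping for $U(2)$ correctly — the positive root has multiplicity two as a $2$-root, so it contributes $(1+\alpha|_Q)^2=1+(\alpha|_Q)^2$, and the subtlety is that $(\alpha|_Q)^2$ is the restriction of $c_1\bmod 2$ and therefore pulls back under $\phi_{G'}^*$ to $L_i$ in degree $2$, not to $L_i^2$. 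Everything else is a direct, if lengthy, computation of the maps $Bg_j^*$ from the displayed matrices.
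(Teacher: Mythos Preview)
Your proposal is correct and follows essentially the same approach as the paper. The paper gives no proof beyond the sentence ``Carrying this out for our biquotients, as in the proof of Proposition~\ref{calc1}, gives the following proposition,'' relying on the commutative ladder of fibrations relating $G\bq H$ to $G'\bq H$ that it has just set up; you have supplied exactly the details that this sentence elides. In particular, your two flagged ``obstacles'' --- the regular-sequence argument (which does follow from the $\pm1$ determinant conditions of Proposition~\ref{t3act}, since every choice of one linear factor from each quadric yields a $3\times3$ system whose determinant is $\pm1$ over any field) and the trivial-normal-bundle transfer of characteristic classes via the fibration $\det\colon G'\bq H\to T^3$ --- are precisely the points the paper leaves implicit, and your treatment of the $2$-root bookkeeping for $U(2)$ is the correct way to make Singhof's formula land in degree~$2$ rather than degree~$4$.
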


The proof of this proposition follows the method outlined in Section \ref{linearsphere}, which was done in detail in in the proof of Proposition \ref{s2cp2calc}.  Thus, we omit it.

From this proposition, it follows that $H^2(G\bq H) \cong \mathbb{Z}^3$ and all odd degree cohomology groups vanish.  Poincar\'e duality then implies $H^4(G\bq H)$ is isomorphic to $\mathbb{Z}^3$ as well, so $H^\ast(G\bq H) \cong H^\ast((S^2)^3)$ as groups.  In particular, the cohomology rings of these examples are all torsion free.

We now prove a portion of Proposition \ref{s3modt3}.

\begin{proof}(Proof of a part Proposition \ref{s3modt3})

For $A_2$, the map sending $u_1\in H^\ast(R(0,1))$ to $u_1+u_2+2u_3$, $u_2$ to $u_1+2u_3$, and $u_3$ to $u_3$ is easily seen to extend to a ring isomorphism carrying characteristic classes to characteristic classes.  Likewise, for $A_4$, one can use the map sending $u_1\in H^\ast(R(0,2))$ to $u_1+u_2+u_3$, $u_2$ to $u_1 + u_3$, and $u_3$ to $u_3$.  By Theorem \ref{6dimclass}, it follows that $A_2$ and $R(0,1)$ are diffeomorphic, as are $A_4$ and $R(0,2)$.  This proves exception 1 of Proposition \ref{s3modt3}.

\

We now distinguish the three families.  We claim that in the $R$ family, no nontrivial degree $2$ element squares to $0$, while in the $S$ family, there is a non-trivial family of elements which square to $0$, and in the $T$ family, there are at least two families of elements which square to $0$.

Begin with a degree $2$ element $x = \alpha u_1 + \beta u_2 + \gamma u_3 \in H^2(G\bq H)$.  Computing, we find $x^2$ is given by $$\begin{cases} u_1u_2(2\alpha \beta -2\alpha^2 - \beta^2) + u_1u_3(2\alpha\gamma - m_1\gamma^2) + u_2u_3(2\beta\gamma -m_2\gamma^2) & R \text{ biquotients}\\ u_1u_2(2\alpha \beta-2\alpha^2 - \beta^2) + u_1 u_3(2\alpha\gamma -k_3\alpha^2) + u_2 u_3(2\beta\gamma -l_3\beta^2) & S \text{ biquotients} \\ u_1u_2(2\alpha \beta - l_1\beta^2) + u_1u_3(2\alpha\gamma -m_1\gamma^2) + u_2 u_3(2\beta\gamma -m_2\gamma^2) & T \text{ biquotients} \end{cases}.$$

But $2\alpha \beta - 2\alpha^2 -\beta^2 = -\alpha^2 - (\alpha-\beta)^2$ so vanishes iff $\alpha = \beta = 0$.  From this, one easily sees that $x^2 = 0$ for a nontrivial element in the $R$ family only when $m_1 = m_2 = 0$, which is the same as the entry in the $S$ family when $k_3 = l_3 = 0$.  Further, for the $S$ family, $x^2 = 0$ iff $\alpha = \beta = 0$, so there is a single family of solutions.

Finally, for the $T$ biquotients, one can easily verify that, other than when $\gamma = \beta = 0$, one has the additional family of solutions given by $\gamma = 0$ and $l_1\beta = 2\alpha$.  If $m_2 \neq 0$ and $2m_1\neq l_1 m_2$, it is easy to see that all elements which square to $0$ fall into one of these two families.  On the other hand, if $m_2 = 0$, one has the additional family with $\alpha = \beta = 0$ while if  $2m_1 = l_1 m_2$, one has the additional family with $l_1\beta = 2\alpha = m_1\gamma$.

In summary, a biquotient belonging to a family (other than $R(0,0)$) is not even homotopy equivalent to a biquotient in another family because their cohomology rings are non-isomorphic.  Further, those biquotients in the $T$ family with $m_2(2m_1 - l_1 m_2) = 0$, that is, with $p_1 = 0$ are distinct up to homotopy from the rest of the biquotients in the $T$ family.

\

We now show $A_1$ and $A_3$ are distinct, up to homotopy, from every other biquotient of the form $SU(2)^3\bq T^3$.  For both examples, a simple calculation shows that no non-trivial degree $2$ element squares to $0$, so if $A_1$ or $A_3$ is homotopy equivalent to a biquotient in a family, it must be the $R$ family.  However, in the $R$ family, there are two elements of a basis of $H^2$, namely, $u_1$ and $u_2$, with the property that $u_1^2 = -2 u_1 u_2 = 2 u_2^2$.

We claim that for neither $A_1$ nor $A_3$ is there a pair of elements $u_1', u_2'$ in a basis of $H^2$ with $u_1'^2 = -2u_1' u_2' = 2 u_2'^2$.  For example, in the cohomology ring of $A_1$, if one sets $u_1' = \alpha u_1+\beta u_2+\gamma u_3$ and $u_2' = \delta u_1 + \epsilon u_2 + \zeta u_3$, then the $u_1 u_3$ component of the equation $u_1'^2 = 2 u_2'^2$, taken mod $2$ implies that $\beta$ is even.  Likewise, the $u_1 u_3$ component implies $\gamma$ is even, which then implies, via the $u_2 u_3$ component, that $\zeta$ is even.  Finally, looking at the $u_1 u_3$ component mod $4$ then forces $\alpha$ to be even, so $u_1$ cannot be a member of $\mathbb{Z}$-basis of $H^2(A_1)$.  A similar argument works for $A_3$.

We must also show show that $A_1$ and $A_3$ are distinct up to homotopy.  But, for $A_1$, $p_1$ is a generator while for $A_3$, $p_1$ is $10$ times a generator.  Hence, taken mod $24$ they do not coincide.  Since $p_1 \pmod{24}$ is a homotopy invariant \cite{AH}, $A_1$ and $A_3$ are not homotopy equivalent.

\

We now distinguish the rings in the $R$ family individually, proving exception 2 of Proposition \ref{s3modt3}.  Consider two $R$ biquotients $R(m_1,m_2)$ and $R(m_1', m_2')$.

 To begin with, we make the substitution $u_1' = u_1+u_2$.  In the $(u_1',u_2,u_3)$ coordinates, the cohomology rings of the $R$ biquotients have the form $\Z[u_1',u_2,u_3]/I$ where $I$ is generated by $(u_1')^2 = u_2^2$, $u_1'u_2 = 0$, and $u_3^2 = -m_1 u_1'u_2 - (m_2-m_1) u_2 u_3$.  A simple calculation shows that $u_1'$ and $u_2$ are, up to reordering and changing signs, the only two primitive elements in $H^2$ satisfying $(u_1')^2 = u_2^2$ and $u_1' u_2 = 0$, and further, that the map swapping these two elements extends to a characteristic class preserving isomorphism of $R(m_1,m_2)$.  Thus, we may assume that any isomorphism $f:R(m_1, m_2)\rightarrow R(m_1', m_2')$ maps $u_1'\in R(m_1, m_2)$ to $u_1'\in R(m_1', m_2')$ and likewise for $u_2$.  This, in turn, implies that $f(u_3) = u_3 + \alpha u_1' + \beta u_2\in R(m_1', m_2')$ for some integers $\alpha$ and $\beta$.

In order for this to be well defined, we must have $$f(u_3)^2 = -m_1 f(u_3) u_1' - (m_2-m_1) f(u_3) u_2.$$  Inspecting the $(u_1')^2$, $u_1' u_3$, and $u_2 u_3$ components of this equation gives the three equations \begin{align*} \alpha^2 + \beta^2 &= -cm_1 \alpha - (m_2-m_1)\beta \\ 2\alpha + -m_1' &= -m_1 \\ 2\beta - (m_2'-m_1') &= -(m_2 - m_1).\end{align*}  The second and third equations clearly imply $m_i = m_i' \pmod{2}$.  Solving the second equations for $\alpha$ and $\beta$ and substituting into the first and rearranging then gives $m_1^2 + (m_2 - m_1)^2 = m_1'^2 + (m_2'-m_1')^2$ as claimed.  Hence these conditions are necessary.  A simple calculation then shows that these conditions are also sufficient to define an isomorphism and that this isomorphism carries characteristic classes into characteristic classes.

\end{proof}

\subsection{\texorpdfstring{Biquotients with $\pi_\ast(G\bq H)_\Q\cong \pi_\ast(S^3\times S^4)_\Q$}{Rational homotopy groups like S3 x S4  }}
\label{analysiss3s4}

In this section, we completely classify all biquotients with rational homotopy groups isomorphic to those of $S^3\times S^4$.

Let $(G,H) = (G_1\times SU(2), H_1\times SU(2))$ denote one of the four entries in Table \ref{table:gplist} having the same rational homotopy groups as $S^3\times S^4$.  That is, $$(G_1,H_1)\in  \{(SU(4), SU(3)), (Sp(2), SU(2)), (Spin(7), \mathbf{G}_2), (Spin(8), Spin(7))\}.$$

  We have already classified the form of the action in Proposition \ref{S2S4class}:  in every case but one, the first factor of $H$ acts on one side of the first factor of $G$ with quotient $S^7$.  The $SU(2)$ factor of $H$ then acts via the Hopf action on $S^7$ and either trivially or by conjugation on the $SU(2)$ factor of $G$.  Thus, in the non-exceptional case, the biquotients fall into at most 2 diffeomorphism types, each having the form $S^7\times_{SU(2)} S^3$ where $SU(2)$ action on $S^7$ is the Hopf action and the $SU(2)$ action on $S^3$ is either trivial or by conjugation.  We will later see these two biquotients are not even homotopy equivalent.
	
The exceptional case occurs when $G = Sp(2)\times SU(2)$.  In this case, there is an additional action where the first factor of $H$ acts on the first factor of $G$ with quotient $T^1 S^4$.  The other $SU(2)$ factor of $H$ then acts freely on $T^1 S^4$ with quotient $S^4$ and it acts either trivially or by conjugation on the $SU(2)$ factor of $G$.  When it acts trivially, the quotient is again $S^3\times S^4$.  However, when it acts by conjugation, we get a biquotient which is not homotopy equivalent to either of the two biquotients of the form $S^7\times_{SU(2)} S^3$.  Hence, there are precisely three homotopy types and three diffeomorphism types of biquotients which have the rational homotopy groups of $S^3\times S^4$.
	
	In each case except $G = Spin(8)\times SU(2)$, the $H_1$ action on $G_1$ is unique.  But when $G_1 = Spin(8)$, there are, up to conjugacy, three non-trivial homomorphisms $Spin(7)\rightarrow Spin(8)$: the standard inclusion, as well as the two spin representations.  However, the triality automorphism of $Spin(8)$ interchanges the images of these three homomorphims, and so, up to the equivalence in Proposition \ref{diffeostabilize}, there is a unique $H_1$ action on $G_1$ in this case as well.
	
	Summarizing, we have the following proposition.
	
	\begin{proposition}
	
	Suppose $G\bq H$ is a reduced biquotient whose rational homotopy groups are isomorphic to those of $S^3\times S^4$.  Then, up to the equivalence in Proposition \ref{diffeostabilize}, the $H$ action on $G$ is given as follows with $\epsilon \in \{0,1\}$.
	
	For $(G,H) = (SU(4)\times SU(2), SU(3)\times SU(2))$, we have $$(A,B)\ast(C,D) = (\diag(A,1) C \diag(B,B)^{-1}, B^\epsilon D B^{-\epsilon}).$$
	
	For $(G,H) = Sp(2)\times SU(2), SU(2)^2)$, we have, after identifying $SU(2) = Sp(1)$, $$(p,q)\ast (A, r) = (\diag(p,p) A \diag(q,1)^{-1}, p^\epsilon rp^{-\epsilon})$$ or the exceptional action $$(p,q)\ast(A,r) = (\diag(p,p) A \diag(q,1)^{-1}, q^{\epsilon} r q^{-\epsilon}).$$
	
	For $(G,H) = (Spin(7)\times SU(2), \mathbf{G}_2\times SU(2))$, we have $$(A,B)\ast(C,D) = (A C \diag(B,1,1,1,1), B^\epsilon D B^{-\epsilon})$$ where $\diag(B,1,1,1,1)^{-1}$ indicates the lift of the standard inclusion $SO(3)\rightarrow SO(7)$ to $Spin(7)$.
	
	For $(G,H) = (Spin(8)\times SU(2), Spin(7)\times SU(2))$, we have $$(A,B)\ast(C,D) = ( \diag(A,1) C \diag(B,B)^{-1}, B^{\epsilon} D B^{-\epsilon} )$$ where $\diag(B,B)$ indicates the lift of the map $SU(2)\rightarrow \Delta SO(4)\subseteq SO(4)\times SO(4)\subseteq SO(8)$.
	
	\end{proposition}
	
As mentioned previously, there are at most three diffeomorphism types arising.  We now show that each of these three biquotients are not even homotopy equivalent to each other.  Let $X$ denote $S^7\times_{SU(2)} S^3$ where the $SU(2) = S^3$ acts on $S^7$ via the Hopf map and on $SU(2)$ by conjugation and let $Y$ denote $T_1 S^4\times_{SU(2)} S^3$.

\begin{proposition}\label{calc1}  The cohomology rings $H^\ast(X)$, $H^\ast(Y)$, and $H^\ast(S^3\times S^4)$ are isomorphic, but the first Pontryagin class is $p_1(X) = \pm 4 \in H^4(X)\cong \Z$ and $p_1(Y) = \pm 8 \in H^4(Y)\cong \Z$.  In particular, since $p_1$ mod $24$ is a homotopy invariant \cite{AH}, $X$, $Y$, and $S^3\times S^4$ are distinct up to homotopy.

\end{proposition}

\begin{proof}

We will only do the computation for $X$, the computation for $Y$ being analogous.

To carry out the calculation, we will describe $X$ and $Y$ as biquotients with $G = Sp(2)\times Sp(1)$ and $H = Sp(1)\times Sp(1)$.  We use the maximal tori $$T_H = \{(w_1, w_2):w_i \in S^1\} \text{ and } T_G = \left\{ \left(\begin{bmatrix} u_1 & \\ & u_2\end{bmatrix}, v\right): u_i, v\in S^1\right\}.$$  We then have $H^\ast(BT_H) \cong \Z[\overline{w}_1,\, \overline{w}_2]$ where $\deg(\overline{w}_i) = 2$ and $H^\ast(BH)\cong  \Z[\overline{w}_1^2,\, \overline{w}_2^2].$  Likewise, since $T_{G\times G} = T_G\times T_G$, we have $H^\ast(BT_{G\times G})\cong H^\ast(BT_G)\otimes H^\ast (BT_G)$ where $H^\ast(BT_G)$ is isomorphic to $\Z[\overline{u}_1,\, \overline{u}_2,\, \overline{v}]$ and hence, that $H^\ast(BG) \cong \Z[\overline{u}_1^2 + \overline{u}_2^2,\, \overline{u}_1^2 \overline{u}_2^2,\, \overline{v}^2].$  We also let $H^\ast(G) = \Lambda_\Z(s_3, s_6)\otimes \Lambda_\Z(t_3)$ where $\deg(s_i) = i$ and $\deg(t_3) = 3$.  Then by Proposition \ref{differentials}, in the spectral sequence associated to the fibration $G\rightarrow BG\rightarrow BG\times BG$, we have \begin{align*} ds_3 &= \left(\overline{x}_1^2 + \overline{x}_2^2\right)\otimes 1 - 1\otimes\left(\overline{x}_1^2 + \overline{x}_2^2\right)\\ ds_7 &= \overline{x}_1^2 \, \overline{x}_2^2 \otimes 1 - 1\otimes \overline{x}_1^2\, \overline{x}_2^2  \\ dt_3 &= \overline{y}^2 \otimes 1 - 1 \otimes \overline{y}^2.\end{align*}

The map $f:H\rightarrow G\times G$ defining the biquotient action is given by $f(p,q) = (f_1(p,q), f_2(p,q))$ where $$f_1(p,q) = \left(\begin{bmatrix} p& \\ & q\end{bmatrix}, p\right) \text{ and }f_2(p,q) = \left(\begin{bmatrix} 1 & \\ & 1\end{bmatrix}, p\right).$$  Hence, we see the maps $Bf_i^\ast:H^2(BT_G)\rightarrow H^2(BT_H)$ are given by \begin{center}

\begin{tabular}{ccc}

$Bf_1^\ast(\overline{u}_1) = \overline{w}_1$ & & $Bf_2^\ast(\overline{u}_1) = 0$\\

$Bf_1^\ast(\overline{u}_2) = \overline{w}_2$ & & $Bf_2^\ast(\overline{u}_2) = 0$\\

$Bf_1^\ast(\overline{v}) = \overline{w}_1$ & & $Bf_2^\ast(\overline{v}) = \overline{w}_1.$\end{tabular}

\end{center}

Thus, in the spectral sequence for the fibration $G\rightarrow G\bq H\rightarrow BH$, we have \begin{center} \begin{tabular}{rrr} $ds_3$ = & $Bf^\ast\left((\overline{u}_1^2 + \overline{u}_2^2)\otimes 1 - 1\otimes(\overline{u}_1^2 + \overline{u}_2^2)\right)$ = & $\overline{w}_1^2 + \overline{w}_2^2$ \\

$ds_7=$ & $Bf^\ast\left(\overline{u}_1^2 \, \overline{u}_2^2 \otimes 1 - 1\otimes \overline{u}_1^2\, \overline{u}_2^2\right)$ =& $\overline{w}_1^2\, \overline{w}_2^2$\\

$dt_3=$ & $Bf^\ast\left(\overline{v}^2 \otimes 1 - 1 \otimes \overline{v}^2\right)$ =& $0 $.

\end{tabular}

\end{center}

Computing the spectral sequence for the fibration of the biquotient, we see that $E^\infty_{0,3} = \Z$ generated by $t_3$, but all other $E^\infty_{p,q}$ with $0< p+q \leq 3$ are trivial.  Hence, $H^3(X)\cong \Z$.  It follows from Poincar\'e duality that $H^\ast(X)\cong H^\ast(S^3\times S^4)$ as rings.  Further, we see that $H^4(X)\cong E^\infty_{4,0}$ is given by $\Z_{\langle\overline{w}_1^2\rangle}\oplus \Z_{\langle \overline{w}_2^2 \rangle}/(\overline{w}_1^2 + \overline{w}_2^2)\cong \mathbb{Z}.$  From this, we see $\phi_H^\ast (\overline{w}^2_2)$ is a generator of $H^4(X)$.

\

We now compute the first Pontryagin class of $X$.  The positive roots of $G$, $\Delta^+ G$, are given by $u_1 + u_2$, $u_1 - u_2$, $2u_1$, $2u_2$, and $2v$ while the positive roots of $H$ are $2w_1$ and $2w_2$.  Recall that, as mentioned after Theorem \ref{pclass}, that $\phi_G^\ast =  \phi_H^\ast Bf_2^\ast.$  Using this together with equation \eqref{firstpont} and the fact that $Bf_2^\ast(\overline{u}_i) = 0$, we compute \begin{align*} p_1(X) &= \phi_H^\ast( Bf_2^\ast(4v^2) - 4\overline{w}_1^2 - 4\overline{w}_2^2)\\ &= -4\phi_H^\ast(\overline{w}_2^2)\\ &= \pm 4\in H^4(X)\cong \mathbb{Z}.\end{align*}

\end{proof}

\subsection{\texorpdfstring{Biquotients with $\pi_\ast(G\bq H)_\Q\cong \pi_\ast(S^3\times \mathbb{C}P^2)_\Q$}{Rational homotopy groups like S3 x CP2  }}\label{analysiss3cp2}
In this section, we present the classification of reduced biquotients with rational homotopy groups isomorphic to $S^3\times \mathbb{C}P^2$.  We will completely classify the effectively free actions, but we are unable to give a full diffeomorphism classification.

According to Table \ref{table:gplist}, the rational homotopy groups of a reduced biquotient can be isomorphic to those of $S^3\times \mathbb{C}P^2$ only when $(G,H) = (SU(3),S^1), (SU(3)\times SU(2), SU(2)\times S^1), (SU(4)\times SU(2), Sp(2)\times S^1)$.  When $(G,H) = (SU(3),S^1)$, the biquotients were discovered by Eschenburg \cite{Es1} who classifies the actions.  The diffeomorphism, homeomorphism, and homotopy equivalence classification of Eschenburg spaces is incomplete, but many partial results are known \cite{Kru1,Kru2,Kru3,KS1,KS2}.

When $(G,H) = (SU(4)\times SU(2), Sp(2)\times S^1)$, there is a unique non-trivial homomorphism $Sp(2)\rightarrow SU(4)$ with quotient $SU(4)/Sp(2) = S^5$.  Further, there is no non-trivial homomorphism $Sp(2)\rightarrow SU(2)$.  It follows that such biquotients have the form $(SU(4)/Sp(2))\times_{S^1} SU(3) = S^5\times_{S^1} S^3$ where the $S^1$ action on $S^5$ and $S^3$ is linear.

When $(G,H) = (SU(3)\times SU(2), SU(2)\times S^1)$, the situation is more complicated because there are two non-trivial homomorphisms from $SU(2)$ into $SU(3)$, the natural inclusion map and also the homomorphism $SU(2)\rightarrow SO(3)\subseteq SU(3)$.  If the action of the $SU(2)$ factor of $H$ on the $SU(2)$ factor of $G$ is trivial, then these biquotients have the form $(SU(3)/SU(2))\times_{S^1} SU(2) = S^5\times_{S^1} S^3$ or $(SU(3)/SO(3))\times_{S^1} SU(2)$.

But the $SU(2)$ factor of $H$ can also act by conjugation on the $SU(2)$ factor of $G$.  When this occurs, according to Proposition \ref{su2biquotients}, the projection of the $H$ action on the $SU(3)$ factor of $G$ must be effectively free.  But then $SU(3)\bq H$ is a $4$-dimensional biquotient.  Such biquotients have been classified \cite{Es2,D1,KZ}.  It turns out that there are precisely two such actions:  for $(A,z)\in SU(2)\times S^1$ and $B\in SU(3)\times SU(2)$, they are  $$(A,z)\ast B = \diag(zA, \overline{z}^2) B \text{ and } (A,z)\ast B = \diag(zA, \overline{z}^2) B \diag(z^4, z^4, \overline{z}^8)^{-1}.$$  It follows that in either case, this biquotient is decomposable, being the total space of an $S^3$-bundle over $\mathbb{C}P^2$.  Using a calculation similar to the proof of Proposition \ref{calc1}, one can show that these two biquotients have cohomology rings isomorphic to that of $S^3\times \mathbb{C}P^2$, but that their first Pontryagin class distinguishes them.  In the case of $S^3\times \mathbb{C}P^2$, $p_1 = \pm 3\in H^4 \cong \mathbb{Z}$, and for the first biquotient, one has $p_1 = 0$, while for the second, $p_1 = \pm 8$.  Since $p_1 \pmod{24}$ is a homotopy invariant \cite{AH}, these three biquotients are distinct up to homotopy.

\

We now classify effectively free biquotient actions of $S^1$ on $(SU(3)/SO(3))\times S^3$.

\begin{proposition}

Consider the action of $H = SO(3)\times S^1$ on $G = SU(3)\times SU(2)$ given by \begin{align*} (A,z)\ast (B,C) = &\big( A B \diag(z^{m_1}, z^{m_2}, \overline{z}^{m_1+m_2}), \\ & \diag(z^{n_1}, \overline{z}^{n_1}) C \diag(z^{n_2}, \overline{z}^{n_2})^{-1}\big),\end{align*} with $\gcd(m_1, m_2, n_1, n_2) = 1$.  Then this action is free iff $\gcd(m_1 m_2 (m_1 + m_2), n_1^2 -  n_2^2) = 1$.  Further,  the action is effectively free iff it is free.

\end{proposition}

\begin{proof} We note that $\gcd(m_1 m_2(m_1 + m_2), n_1^2 - n_2^2) = 1$ iff $\gcd(m_i, (n_1 \pm n_2)) = 1 = \gcd(m_1 + m_2, n_1 \pm n_2)$.  It is easy to see that an action satisfying each of these six $\gcd$ conditions is free.

Let $f=(f_1,f_2):H\rightarrow G^2$ define the action and assume the action is effectively free.  According to Proposition \ref{freetest}, we see that whenever $f_1(A,z)$ is conjugate to $f_2(A,z)$, then $f_1(A,z) = f_2(A,z) \in Z(G)$.  Since $SO(3)\subseteq SU(3)$ does not intersect the center of $SU(3)$, if $f_1(A,z) \in Z(G)$, then $f_1(A,z)\in \{I\}\times Z(SU(2)) = \{(I,\pm I)\}.$  Since $f$ is injective and since $(I,-I)^2 = (I,I)$, the only elements of $H$ which can possibly map to $(I,-I)$ is $(I, -1)\in H$.  In particular, if the action by $SO(3)\times S^1$ on $G$ is ineffective, then $n_1$ and $n_2$ have the same parity.  We will later see that being effectively free implies $n_1$ and $n_2$ have different parities, so such an action is effectively free iff free.

In $SU(n)$, two matrices are conjugate iff they have the same eigenvalues, up to reordering.  In particular, the second factors are conjugate iff either $z^{n_1} = z^{n_2}$ or $z^{n_1} = \overline{z}^{n_2}$.  That is, the second factors are conjugate iff $z$ is either an $(n_1-n_2)$-th root of $1$ or an $(n_1+n_2)$-th root of $1$.

The eigenvalues of a matrix in $SO(3)$ have the form $\lambda, \overline{\lambda}, 1$.  It follows that a matrix of the form $\diag(z^{m_1}, z^{m_2}, \overline{z}^{m_1 + m_2})$ is conjugate to an element in $SO(3)$ iff $z$ is an $m_i$-th root of $1$, or an $(m_1 + m_2)$-th root of $1$.

Assume $z$ is a $\gcd((n_1-n_2), m_1)$-th root of $1$.  Then $(A,z)$ fixes a point of $SU(3)\times SU(2)$ for an appropriate choice of $A$, and thus, $(A,z)$ fixes every point of $SU(3)\times SU(2)$.  This implies $z^{n_1} = z^{n_2} \in \{1,2\}$ and $z^{m_1} = z^{m_2}=1$, from which is follows that $\overline{z}^{m_1 + m_2} = 1$.  In particular, $\gcd((n_1-n_2), m_1)$ must divide $\gcd(m_1,m_2, 2n_1, 2n_2)\in \{1,2\}$.

Analogously, each of $\gcd((n_1\pm n_2), m_i)$ and $\gcd((n_1\pm n_2), m_1 + m_2)$ must either be $1$ or $2$.  However, it actually follows that none of them can be $2$.  For if one of them is $2$, then $n_1$ and $n_2$ have the same parity and $m_1$ and $m_2$ are even. Since $\gcd(m_1,m_2, n_1, n_2) = 1$, $n_1$ and $n_2$ must both be odd.  But then one of $n_1 + n_2$ and $n_1 - n_2$, say $n_1 + n_2$, is divisible by $4$.  In addition, since both $m_i$ are even, at least one of $m_1$, $m_2$, and $m_1 + m_2$, say $m_1 + m_2$ is divisible by $4$.  But then $4\leq \gcd(n_1 + n_2, m_1 + m_2)\leq 2$, a contradiction.

We have now shown that if the action is effectively free, then $\gcd(m_i, n_1 \pm n_2) = 1$ and $\gcd(m_1 + m_2, n_1 \pm n_2) = 1$.  But this is equivalent to $\gcd(m_1 m_2(m_1 + m_2), n_1^2 - n_2^2) = 1$.  Because at least one of $m_1$, $m_2$, and $m_1 + m_2$ is even, $n_1$ and $n_2$ must have opposite parities, which, by the above discussion, implies the action is free.

\end{proof}

When $\{n_1,n_2\} = \{0,1\}$, we note that the biquotients are diffeomorphic to the product $(SU(3)/SO(3))\times S^2$.  To see this, first equip $SU(3)$ with a bi-invariant metric, so $SU(3)/SO(3$ is a symmetric space and the induced $S^1$ action it is isometric.  The isometry group is $SU(3)/(Z(SU(3))\cap SO(3)) = SU(3)$.   Now, $\pi_2(BSU(3)) = 0$ so the trivial bundle is the only principal $SU(3)$ bundle over $S^2$.  Thus, as in the discussion following Proposition \ref{cp2s2actions}, $(SU(3)/SO(3))\times_{S^1} S^3 \cong (SU(3)/SO(3))\times_{SU(3)}(SU(3)\times_{S^1} S^3) \cong (SU(3)/SO(3))\times_{SU(3)} (SU(3)\times S^2) \cong (SU(3)/SO(3))\times S^2$.

\

We now study linear actions of $S^1$ on $S^5\times S^3$ in more detail.  We identify $S^5\times S^3 =\{ (a,b)\in\mathbb{C}^3 \times \mathbb{C}^2: |a|^2 = |b|^2 = 1\}$.  Then, Proposition \ref{convert}, specialized to the case where $w=1$ shows that a homomorphism $SU(2)\times S^1\rightarrow SU(3)^2$ with $$(A,z)\mapsto \big( \diag(z^{\overline{m}_1} A, z^{2m_1}), \diag(z^{m_2}, z^{m_3}, \overline{z}^{m_1+m_2})\big)$$ induces the $S^1$ action on $S^5\times S^3$ $$z\ast(a_1,a_2,a_3) = (z^{2m_1-m_2} a_1, z^{2m_1-m_3} a_2, z^{2m_1 + m_2 + m_3} a_3)$$ and that a homomorphism $Sp(2)\times S^1\rightarrow SU(4)$ with $$(A,z)\mapsto\big( A, \diag(z^{m_1}, z^{m_2}, z^{m_3}, \overline{z}^{m_1+m_2 + m_3}) \big)$$ induces $$z\ast( a_1,a_2,a_3) = (z^{m_2 + m_3} a_1, z^{m_1+ m_3} a_2, z^{m_1 + m_2} a_3).$$

\begin{proposition}Suppose $S^1$ acts on $S^5\times S^3$ as $$z\ast(a_1,a_2,a_3, b_1,b_2) = (z^{m_1} a_1, z^{m_2} a_2, z^{m_3} a_3, z^{n_1} b_1, z^{n_2} b_2)$$ with $\gcd(m_1,m_2,m_3,n_1,n_2)  = 1$.  Then the action is free iff $\gcd(m_1 m_2 m_3, n_1 n_2) = 1$.  In addition, the action is free iff it is effectively free.

\end{proposition}

The proof of this proposition is very similar to that of Proposition \ref{cp2s2actions}, so is omitted.

If $|n_1| = |n_2| = 1$, then these biquotients are decomposable; they are naturally the total space of a linear $S^5$-bundle over $S^2$.  As in Section \ref{s2s4ands3s4}, such bundles are in bijection with $\pi_1(SO(6))\cong \mathbb{Z}_2$ and the total spaces are distinct up to homotopy.

Similarly, if $|m_1| = |m_2| = |m_3| = 1$, then these biquotients are decomposable; they are naturally the total space of a linear $S^3$ bundle over $\mathbb{C}P^2$.

We have now classified all effectively free actions in the case where $G\bq H$ has rational homotopy groups isomorphic to those of $S^3\times \mathbb{C}P^2$.  We summarize this in the following proposition.

\begin{proposition}

Suppose $G\bq H$ is a reduced biquotient having rational homotopy groups isomorphic to those of $S^3\times \mathbb{C}P^2$.  Then, up to equivalence, one of the following occurs.

(1)  $(G,H) = (SU(3), S^1)$ and $G\bq H$ is an Eschenburg space.

(2)  $(G,H) = (SU(4)\times SU(2), Sp(2)\times S^1)$ and the action is of the form $(A,z)\ast(B,C) =$ $$\big( AB\diag(z^{m_1}, z^{m_2}, z^{m_3}, \overline{z}^{m_1 + m_2 + m_3}), \diag(z^{n_1}, \overline{z}^{n_1}) C \diag(z^{n_2}, \overline{z}^{n_2})^{-1}\big)$$ with $\gcd(m_1,m_2, m_3, n_1, n_2) = 1$ and $\gcd((m_1+m_2)(m_1 + m_3)(m_2+m_3), n_1^2 - n_2^2) = 1$.

(3)  $(G,H) = (SU(3)\times SU(2), SU(2)\times S^1)$ and the action is one of the following four forms. 

(3A):  $(A,z)\ast(B,C) =$ $$\big( \diag(\overline{z}^{m_1} A, z^{2m_1}) B \diag(z^{m_2}, z^{m_3}, \overline{z}^{m_2 + m_3})^{-1}, \diag(z^{n_1}, \overline{z}^{n_1}) C \diag(z^{n_2}, \overline{z}^{n_2})^{-1}\big)$$ with $\gcd(m_1, m_2, m_3, n_1, n_2) = 1$ and $\gcd((2m_1  - m_2)(2m_1-m_3)(2m_1 + m_2 + m_3), n_1^2 - n_2^2)=1$.

(3B):  $(A,z)\ast(B,C) =$ $$\big( \pi(A) B \diag(z^{m_1}, z^{m_2}, \overline{z}^{m_1 + m_2}), \diag(z^{n_1}, \overline{z}^{n_1}) C \diag(z^{n_2}, \overline{z}^{n_2})^{-1} \big)$$ with $\gcd(m_1, m_2, n_1, n_2) = 1$ and $\gcd(m_1 m_2(m_1 + m_2), n_1^2 - n_2^2) = 1$.

(3C):  $(A,z)\ast(B,C) = \big( \diag(zA, \overline{z}^2) B , A CA^{-1}\big)$

(3D):  $(A,z)\ast(B,C) = \big( \diag(zA, \overline{z}^2) B \diag(z^4, z^4, \overline{z}^8)^{-1}, ACA^{-1} \big)$

\end{proposition}

\

We now focus on the topology of these examples, starting with the case of biquotients of the form $S^5\times_{S^1} S^3$.  The topology of these examples has been studied extensively.  See for example, \cite{Esc,Kru2,Kru3} for a partial diffeomorphism classification and \cite{Kru1} for a homotopy classification.

\begin{proposition}
If $n_1 n_2 = 0$, then the cohomology ring of $G\bq H = S^5\times_{S^1} S^3$ is isomorphic to the cohomology ring of $\mathbb{C}P^2 \times S^3$.

If $n_1 n_2 \neq 0$, then $$H^\ast( G\bq H) \cong \begin{cases} \mathbb{Z} & \ast=0,2,5,7\\ \mathbb{Z}_{n_1 n_2} & \ast = 4\\ 0 & \text{otherwise}\end{cases}$$ where the square of the generator of $H^2(G\bq H)$ generates $H^4(G\bq H)$.

Further, if $u\in H^2$ is a generator, then $p_1(G\bq H) = (m_1^2 + m_2^2 + m_3^2 - n_1^2 - n_2^2) u^2$, 
$w_2(G\bq H) = (\sigma_1(m_i) + \sigma_1(n_i)) u$, and $w_4(G\bq H) = (\sigma_2(m_i) + \sigma_1(m_i))\sigma_1(n_i) u^2$.
\end{proposition}

Kruggel \cite{Kru1} proves this in the case $n_1 n_2\neq 0$.  When $n_1 n_2 = 0$, the equation $\gcd(m_1 m_2 m_3, n_1 n_2) = 1$ forces, up to equivalence, all $m_i$ to be $1$.  One can then use the method of Section \ref{linearsphere} to see that Kruggel's formula for the first Pontryagin class and Stiefel-Whitney classes remains valid in this exceptional case.

For biquotients of the form $(SU(3)/SO(3))\times_{S^1} SU(2)$, one has the following result.

\begin{proposition}

For biquotients of the form $(SU(3)/SO(3))\times_{S^1} SU(2)$, the cohomology ring is $H^\ast(G\bq H) \cong \mathbb{Z}[u_2, u_3, u_5]/I$ where $|u_i| = i$ and $$I = \langle \gcd(n_1^2- n_2^2, m_1m_2 - (m_1+m_2)^2) u_2^2, u_2^3, 2u_3, u_3^2, u_5^2 \rangle.$$

Further, $p_1(G\bq H) = 4n_1^2 u_2^2 \in H^4(G\bq H)$.

With coefficients in $\mathbb{Z}_2$, one has $H^\ast(G\bq H;\mathbb{Z}_2) \cong \mathbb{Z}_2 [u_2, v_2, u_3]/J$ where $$J  =\langle u_2^2, v_2^2, u_3^2\rangle.$$ The Stiefel-Whitney class is $w(G\bq H) = 1 + u_2 + u_3$.
\end{proposition}

This proposition can be proved using the techniques from Section \ref{top} with one small change when using $\mathbb{Z}$ coefficients.  Namely, in this case $H^\ast(BSO(3))$ is not a polynomial algebra.  However, one can prove directly that, the map $H^4(BSO(3))\rightarrow H^4(BT_{SO(3)})$ induced from the inclusion of a maximal torus $T_{SO(3)}$ of $SO(3)$ is an isomorphism.  This allows one to compute the cohomology ring up to degree $4$ as well as the first Pontryagin class.  Poincar\'e duality then determines the rest of the cohomology ring.

\subsection{\texorpdfstring{Biquotients with $\pi_\ast(G\bq H)_\Q\cong \pi_\ast(S^3\times S^2\times S^2)_\Q$}{Rational homotopy groups like S3 x S3 x S2  }}\label{analysiss2s3s3}

In this section, we classify reduced biquotients with rational homotopy groups isomorphic to those of $S^3\times S^2\times S^2$.

According to Table \ref{table:gplist}, if $G\bq H$ is a reduced biquotient with rational homotopy groups isomorphic to those of $(S^3)\times (S^2)^2$, then $(G,H) = (SU(2)^3, T^2)$.  As in Sections \ref{s2s4ands3s4} and \ref{analysiss2cp2}, classifying such biquotient actions is equivalent to classifying linear $T^2$ actions on $(S^3)^3$.  As usual, we identify $S^1$ with the unit complex numbers and $S^3 = \{(a_1,a_2)\in \mathbb{C}^2: |a_1|^2 + |a_2|^2 = 1\}$.

A general linear $T^2$ action on $(S^3)^3$ takes the form $(z,w)\ast(a_1,a_2,b_1,b_2, c_1,c_2) =$ $$ (z^{k_1}w^{l_1} a_1, z^{m_1} w^{n_1} a_2, z^{k_2} w^{l_2} b_1, z^{m_2} w^{n_2} b_2, z^{k_3} w^{l_3} c_1 z^{m_3} w^{n_3} c_2).$$  We may assume that $\gcd(k_1, k_2, k_3, m_1, m_2, m_3) = \gcd(l_1, l_2, l_3, n_1, n_2, n_3) = 1$.

\begin{proposition} An action as above is free iff for every choice of three elements $(s_i,t_i)\in \{(k_i, l_i), (m_i, n_i)\}$ with $i=1,2,3$, we have $$\gcd \left(  s_1 t_2 - s_2 t_1, s_1 t_3 - s_3 t_1, s_2 t_3 - s_3 t_2 \right) = 1.$$

\end{proposition}

\begin{proof}  As discussed previously, a linear torus action on a product of spheres is free iff every point with all coordinates either $0$ or $1$ is moved by every non-trivial element of the torus.

Then, for example, at the point $(1,0,1,0,1,0)\in (S^3)^3$, we see that $(z,w)\in T^2$ fixes this point iff  $z^{k_1} w^{l_1} = z^{k_2}w^{l_2} = z^{k_3} w^{l_3} = 1.$

Let $X= \begin{bmatrix} k_1 & l_1 \\ k_2 & l_2\\ k_3 & l_3\end{bmatrix}$.  Then it is easy to see that every non-trivial element of $T^2$ moves the point $(1,0,1,0,1,0)$ iff $X^{-1}(\mathbb{Z}^3)\subseteq \mathbb{Z}^2$.  Indeed, $(z,w) = (e^{2\pi i v_1}, e^{2\pi i v_2})$ fixes (1,0,1,0,1,0) iff $(v_1 \ v_2)^t\in X^{-1}(\mathbb{Z}^3)$.  But $X^{-1}(\mathbb{Z}^3) \subseteq \mathbb{Z}^2$ iff $Y^{-1}(\mathbb{Z}^3)\subseteq \mathbb{Z}^2$, where $Y$ denotes the Smith normal form of $X$.

Set $\alpha = \gcd(k_2l_1 - k_1 l_2, k_3 l_1 - l_3k_1,k_1 l_3 - k_3 l_1)$.  Then the $Y = \begin{bmatrix} \beta & 0 \\ 0 & \alpha \\ 0 & 0\end{bmatrix}$ for some integer $\beta$ which divides $\alpha$.  It follows that there is a non-integral rational vector $v = (v_1, v_2)\in \mathbb{Q}^2$ with $X\cdot v \in \mathbb{Z}^3$ iff $\alpha \neq  1$.  Repeating this argument for the others points in $(S^3)^3$ with all coordinates $1$ or $0$, we see the condition on $\gcd$s is necessary and sufficient for the action to be free.

\end{proof}

We conclude with results on the cohomology groups and characteristic classes of these biquotients which are provable using the method found in Section \ref{linearsphere}.  Set $d = \det \begin{bmatrix} k_1 m_1 & k_1 n_1 + l_1 m_1 & n_1 l_1\\ k_2 m_2 & k_2 n_2 + l_2 m_2 & n_2 l_2 \\ k_3 m_3 & k_3 n_3 + l_3 m_3 & n_3 l_3\end{bmatrix}.$

\begin{proposition}  Consider a free linear $T^2$ action on $(S^3)^3$ paramaterized by integers $(k_i, l_i, m_i,n_i)$ as above.

If $d = 0$, then $H^\ast((S^3)^3/T^2)\cong \mathbb{Z}[t,u,v]/I$ where $|t|=|u|=2$ and $|v| = 3$ and $$I = \langle v^2, (k_i t + l_iu)(m_i t + n_i u) \rangle.$$

If $d\neq 0$, then $H^\ast((S^3)^3/T^2) \cong \mathbb{Z}[t,u,w,x]/J$ where $|t| = |u| =2$ and $|w| = |x| =5$ and $$J = \langle w^2, (k_i t + l_iu)(m_i t + n_i u) \rangle.$$

In either case, $p_1((S^3)^3/T^2) = \sum_{i=1}^3 (k_i-m_i) t^2 + (l_i - n_i) u^2$, $w_2((S^3)^3/T^2) = \sum_{i=1}^3 (k_i + m_i)t + (l_i + n_i)u$, and $w_4((S^3)^3/T^2) = \sum_{1\leq i < j\leq 3} ((k_i + m_i)t + (l_i + n_i)u)((k_j + m_j)t + (l_j + n_j)u).$

\end{proposition}

%


%

\end{document}